\theoremstyle{plain}
\newtheorem{theorem}{Theorem}[section]
\newtheorem{proposition}[theorem]{Proposition}
\newtheorem{definition}[theorem]{Definition}
\newtheorem{remark}[theorem]{Remark}
\newtheorem*{assumptions}{Assumptions}
\renewcommand{\eqref}[1]{\textnormal{(\ref{#1})}}
\numberwithin{equation}{section}
\newcommand{\U}{\mathbb{U}}
\newcommand{\D}{\mathbb{D}}
\newcommand{\C}{\mathbb{C}}
\newcommand{\hh}{\mathbb{H}}
\def\Oh{{\mathcal  O}}
\newcommand{\diam}{\mathrm{diam}}      
\renewcommand{\div}{\mathrm{div}\,} 
\title[Non periodic distributions]{Effective Medium Theory For\\  Van-der-Waals heterostructures
 }
\author[Cao, Ghandriche and Sini]{Xinlin Cao $^*$ Ahcene Ghandriche  $^{**}$ and Mourad Sini$^{\ddag}$}
\thanks{$^*$ Department of Applied Mathematics, The Hong Kong Polytechnic University. Email: xinlin.cao@polyu.edu.hk}
\thanks{$^{**}$ Nanjing Center for Applied Mathematics, Nanjing, 211135, People’s Republic of China. Email: gh.hsen@njcam.org.cn}
\thanks{$^{\ddag}$ RICAM, Austrian Academy of Sciences, Altenbergerstrasse 69, A-4040, Linz, Austria. Email: mourad.sini@oeaw.ac.at. This author is partially supported by the Austrian Science Fund (FWF): P 32660}
\date{\today}
\begin{document}

\begin{abstract}
We derive the electromagnetic medium equivalent to a collection of all-dielectric nano-particles (enjoying high refractive indices) distributed locally non-periodically in a smooth domain $\Omega$. Such distributions are used to model well known structures in material sciences as the Van-der-Waals heterostructures. Since the nano-particles are all-dielectric, then the permittivity remains unchanged while the permeability is altered by this effective medium. This permeability is given in terms of three parameters. The first one is the polarization tensors of the used nano-particles. The second is the averaged Magnetization matrix $\vert \Omega_0\vert^{-1} \, \int_{\Omega_0} \underset{x}{\nabla} \int_{\Omega_0} \underset{y}{\nabla} \Phi_0(x,y) \cdot I_3 \, dy \, dx$, where $\Phi_0(x, y) \, := \, \dfrac{1}{4 \, \pi \, \vert x-y\vert}$, $I_3$ is the identity matrix and $\Omega_0$ is the unit cell. The third one is $\nabla \nabla \Phi_0 (z_i, z_j)$, where $z_i$'s are locations of the local nano-particles distributed in the unit cell. This last tensor models the local strong interaction of the nano-particles. To our best knowledge, such tensors are new in both the mathematical and engineering oriented literature. This equivalent medium describes, in particular, the effective medium of $2$ dimensional type Van-der-Waals heterostructures. These structures are $3$ dimensional which are build as superposition of identical ($2$D)-sheets each supporting locally non-periodic distributions of nano-particles. An explicit form of this effective medium is provided for the particular case of honeycomb heterostructures.
\newline

At the mathematical analysis level, we propose a new approach to derive the effective medium when the subwavelength nano-particles are distributed non-periodically. The first step consists in deriving the point-interaction approximation, also called the Foldy-Lax approximation.
The scattered field is given as a superposition of dipoles (or poles for other models) multiplied by the elements of a vector which is itself solution of an algebraic system. This step is done regardless of the way how the particles are distributed. As a second step, which is the new and critical step, we rewrite this algebraic system according to the way how these nano-particles are locally distributed. The new algebraic system will then fix the related continuous Lippmann Schwinger system which, in its turn, indicates naturally the equivalent medium.

\medskip 

\noindent{\bf Keywords}: Maxwell system, dielectric nano-particles,  effective medium theory, Van-der-Waals heterostructures.
 
\medskip
\noindent{\bf AMS subject classification}: 35C15; 35C20; 35Q60

\end{abstract}

\maketitle

\section{Introduction}\label{sec:Intro}
\subsection{Background and motivation}

During the last years, there has been high interest in the analysis and computational aspects of wave propagation in
the presence of subwavelength resonators consisting of small-scaled inhomogeneities enjoying
high contrast or sign changing constitutive material properties as compared to natural media, see \cite{Osipov-Tretyakov, Pan:2000}. A key aspect of such subwavelength materials, is that under certain critical scales of their size/contrast, such inhomogeneities can resonate at specific frequencies which allow them to behave as resonators
that amplify locally the used incident waves. Such amplifications have tremendous applications in
applied fields related to nanotechnology. Of particular interest are the materials and devices built up as innovative assemblages of 3D-type or 2D-type building blocks which are distributed periodically in a given bounded domain.  These building blocks are themselves designed using subwavelength nano-resonators which are distributed non-necessarily periodically (in the building blocks). This makes the whole structure globally periodic, at the building blocks scales, but locally non periodic inside each block. In this work,
we focus on resonators described with dispersive nano-particles. The dispersive character allows
these nano-particles to enjoy the needed critical scale/contrast in specific bands of frequencies. In particular, we deal with all-dielectric nano-particles that enjoy high indices of refraction.
\bigskip

 The ultimate goals are first to derive the expansions of the electric fields in terms of the cluster of the nano-particles described above. As a second goal, apply them to the generation of fully 3D or {\it{2D-like materials}} enjoying desired properties and functionalities as van der Waals heterostructures. To give a fortaste on the importance of such material, let us first briefly describe them, see also \cite{van-der-Waals-paper}. The van der Waals heterostructures are built as superposition of 2D-sheets, of subwavelength nanoparticles, which are globally periodic but locally formed of heterogeneous clusters of nanoparticles. Example of such distributions as trigonal prismatic, octahidral, chalcogenides, Boron nitrids, etc., see \cite{van-der-Waals-paper, van-der-Waals-book}. They offer many functional possibilities. The control of the electromagnetic wave propagation in such structures (which are periodic globally and heterogeneous locally) can be handled using our approach. Therefore, if we succeed in characterizing the effective material behind such structures, which captures the behavior of the local heterogeneity, then we can tune such structures to reach desired functionalities.  
\bigskip

As mentioned briefly above, at the mathematical analysis level, our approach to derive the effective medium when the subwavelenght particles are distributed non-periodically is divided into two steps. The first step consists in deriving the point-interaction approximation, also called the Foldy-Lax approximation.
The scattered field is given as a superposition of dipoles (or poles for other models) multiplied by the elements of a vector which is itself solution of an algebraic system. This step is done regardless of the way how the particles are distributed (in particular, periodicity is not needed), see \cite{CGS}. As a second step, which is the new and key step, we rewrite this algebraic system according to the way how these particles are locally distributed (in the building blocks). The form of the new algebraic system will then fix the related continuous Lippmann Schwinger system which, in its turn, indicates naturally the equivalent medium.

\subsection{The electromagnetic fields created by all-dielectric nano-particles.}\label{subsec1}
We deal with the time-harmonic electromagnetic scattering by nano-particles based on the following Maxwell-related model
\begin{equation}\label{model-m}
	\begin{cases}
	\mathrm{Curl} \, E^T-i k \mu_rH^T=0\quad\mbox{in}\ \mathbb{R}^3,\\	
	\mathrm{Curl} \, H^T+i k \epsilon_r E^T=0\quad\mbox{in}\ \mathbb{R}^3,\\
	E^T=E^{in}+E^s,\quad H^T=H^{in}+H^s,\\
	\sqrt{\mu_0\epsilon_0^{-1}}H^s\times\frac{x}{|x|}-E^s=\Oh({\frac{1}{|x|^2}}),\quad \mbox{as} \ |x|\rightarrow\infty,
	\end{cases}
\end{equation}
\medskip
\phantom{}
\newline
with $\epsilon_0$ and $\mu_0$ are respectively the electric permittivity and the magnetic permeability of the vacuum outside $D$, where we denote by $D \, := \, \overset{M}{\underset{m=1}{\cup}} D_{m}$ a collection of $M$ connected, bounded and $C^{1}$-smooth nano-particles of $\mathbb{R}^{3}$. Furthermore, $\epsilon_{r} :\, = \, \dfrac{\epsilon}{\epsilon_{0}}$ and $\mu_{r} :\, = \, \dfrac{\mu}{\mu_{0}}$ are the relative permittivity and permeability satisfying $\epsilon_{r} = 1$ outside $D$ while $\mu_{r} = 1$ in the whole space $\mathbb{R}^{3}$. Without loos of generality, we choose as incident waves the plane waves
\begin{equation}\label{EincHinc}
    E^{Inc}(x) \, := \, \mathrm{p} \, e^{i \, k \, \theta \cdot x} \quad \text{and} \quad     H^{Inc}(x) \, := \, \left( \theta \times \mathrm{p} \right) \, e^{i \, k \, \theta \cdot x},
\end{equation}
satisfying 
\begin{equation*}
    \begin{cases}
	   Curl\left(E^{Inc}\right) \, - \, i \, k \, H^{Inc} & \text{=} 0, \quad \text{in} \quad \mathbb{R}^{3}, \\
    & \\
         Curl\left(H^{Inc}\right) \, + \, i \, k \, E^{Inc} & \text{=} 0, \quad \text{in} \quad \mathbb{R}^{3},
		 \end{cases},
\end{equation*}
\medskip
\phantom{}
\newline
with $\theta$ and  $\mathrm{p}$ in $\mathbb{S}^2$, $\mathbb{S}^2$ being the unit sphere, such that $\theta \cdot \mathrm{p} = 0$, as the direction of incidence and polarization respectively. The problem $(\ref{model-m})$ is well-posed in appropriate Sobolev spaces, see \cite{colton2019inverse, Mitrea}, and the behaviors that will come are being possessed by the scattered wave
\begin{eqnarray*}
    E^{s}(\hat{x}) \, &=& \, \dfrac{e^{i \, k \, \left\vert x \right\vert}}{\left\vert x \right\vert} \, \left( E^{\infty}\left( \hat{x}, \theta , \mathrm{p} \right) \, + \, \mathcal{O}\left( \left\vert x \right\vert^{-1} \right) \right), \quad \text{as} \quad \left\vert x \right\vert \rightarrow + \infty, \\
    H^{s}(\hat{x}) \, &=& \, \dfrac{e^{i \, k \, \left\vert x \right\vert}}{\left\vert x \right\vert} \, \left( H^{\infty}\left( \hat{x}, \theta , \mathrm{p} \right) \, + \, \mathcal{O}\left( \left\vert x \right\vert^{-1} \right) \right), \quad \text{as} \quad \left\vert x \right\vert \rightarrow + \infty,
\end{eqnarray*}
where $\left(E^{\infty}\left( \hat{x}, \theta , \mathrm{p} \right), H^{\infty}\left( \hat{x}, \theta , \mathrm{p} \right) \right)$ is the corresponding electromagnetic far-field pattern of $(\ref{model-m})$ in the propagation direction $\hat{x}$. In addition, we suppose that 
\begin{equation}\label{DaBz}
    D_{m_{\ell}} \, = \, a \, B_{m_{\ell}} \, + \, z_{m_{\ell}}, \quad m = 1, \cdots, \aleph \quad \text{and} \quad \ell = 1, \cdots, K+1, 
\end{equation}
where $D_{m_{\ell}}$ are the $\aleph \times (K+1)$ connected small components of the medium $D$, which are characterized by the parameter $a$ and the locations $z_{m_{\ell}}$. Each $B_{m_{\ell}}$ containing the origin is a bounded Lipschitz domain and fulfills that $B_{m_{\ell}} \subset B(0,1)$, where $B(0,1)$ is the unit ball. The parameter $a$ is defined by 
\begin{equation*}
    a \, := \, \underset{1 \leq m \leq \aleph \atop 1 \leq \ell \leq K+1}{\max} \text{diam}\left( D_{m_{\ell}} \right),
\end{equation*}
and we denote by $d$ the minimal distance between the distributed nano-particles, i.e.
\begin{equation}\label{dmin}
  d \;  =  \; \underset{1 \leq m \leq \aleph}{\min}  \; \underset{1 \leq \ell , j \leq K+1 \atop \ell \neq j}{\min}  \; \text{dist}\left(   D_{m_\ell}\,, \, D_{m_{j}}  \right).
\end{equation}
The two parameters $a$ and $d$ are linked to each other through the following formula 
\begin{equation}\label{distribute}
d \, = \, c_r \, a^{1-\frac{h}{3}} \quad \text{and} \quad 
\left\vert \Omega_m \right\vert \, = \,  d^3 \quad \mbox{for any} \quad m=1, \cdots, \aleph,
\end{equation}
where $c_r$ is the dilution parameter, which is independent of the parameter $a$, and is of order one. The subdomain $\Omega_{m}$ is a shift from the unit cell $\Omega_{0}$, which will be explained in greater detail later, see for instance Assumption $(\ref{assII})$.    
\bigskip
\phantom{}
\newline 
On the basis of the Foldy-Lax approximation presented in \cite{CGS}, we investigate the corresponding effective permittivity and permeability generated by the cluster of nano-particles with high relative permittivity contrast under the following assumptions.
\medskip
\newline
\begin{assumptions}
\smallskip
\phantom{}
\begin{enumerate}
    \item[]
    \item \textit{Assumption I), Assumption II) and Assumption III)}, used in \cite[Page 4-5]{cao2023all}. More precisely.
    \begin{enumerate}
    \item[]
    \item Assumption on the shape of $B_{m_{\ell}}$\label{assIa}. 
Assume that the shapes of $B_{m_{\ell}}$'s introduced in \eqref{DaBz} are the same and we denote
\begin{equation*}\label{def-B}
	B \, := \, B_{m_{\ell}} \quad \mbox{for} \quad m=1,\cdots, \aleph \quad \text{and} \quad \ell = 1, \cdots, K+1.
\end{equation*}
The domain $B$ is a bounded and $C^1$-smooth domain that contains the origin. Recall the Helmholtz decomposition of $\mathbb{L}^2:=\mathbb{L}^2(B)$ space, for any bounded domain $B$,  
\begin{equation}\label{hel-decomp}
\mathbb{L}^2=\mathbb{H}_0(\div=0)\overset{\perp}{\otimes}\mathbb{H}_0(Curl=0)\overset{\perp}{\otimes}\nabla \mathcal{H}armonic.
\end{equation} 
Then, the projection of an arbitrary vector field $F$ onto three sub-spaces $\mathbb{H}_0(\div=0), \; \mathbb{H}_0(Curl=0)$ and $\nabla \mathcal{H}armonic$ can be respectively represented as $\overset{1}{\mathbb{P}}\left( F \right), \overset{2}{\mathbb{P}}\left( F \right)$ and $\overset{3}{\mathbb{P}}\left( F \right)$. We define the vector Newtonian operator ${\bf N}_B$, for any vector function $F$, as
\begin{equation*}\label{def-new-operator}
{\bf N}_{B}(F)(x) \, := \, \int_B \Phi_0(x, z) \, F(z)\,dz,\quad\mbox{where}\quad \Phi_0(x, z)=\frac{1}{4\pi}\frac{1}{|x-z|}\quad (x\neq z).
\end{equation*}
For the Newtonian operator, under the previous decomposition, we denote $\left(\lambda_{n}^{(2)}(B), e_{n}^{(2)} \right)_{n \in \mathbb{N}}$ as the related eigen-system (of its projection) over the subspace $\mathbb{H}_{0}\left( Curl = 0 \right)$, and we denote $\left(\lambda_{n}^{(1)}(B), e_{n}^{(1)} \right)_{n \in \mathbb{N}}$ as the related eigen-system (of its projection) over the subspace $\mathbb{H}_{0}(\div=0)$. Since 
\begin{equation*}
\mathbb{H}_{0}\left( \div = 0 \right) \equiv Curl \left( \mathbb{H}_{0}\left( Curl \right) \cap \mathbb{H}\left( \div = 0 \right) \right),    
\end{equation*}
see for instance \cite{{amrouche1998vector}}, then for any $n$, there exists $\phi_n\in \mathbb{H}_0(Curl)\cap \mathbb{H}(\div=0)$, such that 
\begin{equation}\label{pre-cond}
e_{n}^{(1)}= Curl (\phi_{n}) \,\, \mbox{ with } \,\, \nu\times \phi_{n}=0 \,\, \text{and} \,\, \div(\phi_{n})=0.
\end{equation}	
We assume that, for a certain $n_0$, 
\begin{equation*}\label{Intphin0}
\int_{B} \phi_{n_{0}}(y) \, dy \, \neq \, 0,
\end{equation*}
where $\phi_{n_0}$ fulfills \eqref{pre-cond} with $n=n_0$. Then we can define a constant tensor ${\bf P_0}$ by
\begin{equation}\label{defP0}
{\bf{P}}_0 := \sum_{\ell=1}^{\ell_{\lambda_{n_{0}}}} \int_{B}\phi_{n_{0}, \ell}(y)\,dy \otimes  \int_{B}\phi_{n_{0}, \ell}(y)\,dy, \quad \ell_{\lambda_{n_{0}}} \in \mathbb{N}^{\star},
\end{equation}
where $\phi_{n_{0}, \ell}$ fulfills
\begin{equation}\notag
e^{(1)}_{n_0, \ell}=Curl(\phi_{n_{0}, \ell}),\, \div(\phi_{n_{0}, \ell})=0, \, \nu\times \phi_{n_{0}, \ell}=0,\mbox{ and }
N(e^{(1)}_{n_0, \ell})=\lambda_{n_0}^{(1)}(B) e^{(1)}_{n_0, \ell}.
\end{equation}  
    \item[]
    \item Assumptions on the permittivity and permeability of each particle.\label{assIb} In order to investigate the electromagnetic scattering by all-dielectric nano-particles with high contrast electric permittivity parameter, we assume that for a positive constant $\eta_0$, independent of $a$, 
\begin{equation*}\label{contrast-epsilon}
\eta:=\epsilon_{r} - 1=\eta_0 \; a^{-2},\; \mbox{ with } \;\; \; ~~ a \ll 1,
\end{equation*}
and the relative magnetic permeability $\mu_r$ to be moderate, namely $\mu_r=1$. 
    \item[]
    \item Assumption on the used incident frequency $k$.\label{assIc} There exists a positive constant $c_0$, independent of $a$, such that
\begin{equation*}\label{condition-on-k}
1 \, - \, k^2 \, \eta \, a^2 \, \lambda_{n_{0}}^{(1)}(B) \, = \, \pm \; c_0\; a^h,\;\; a \ll 1,
\end{equation*}
where $\lambda_{n_{0}}^{(1)}(B)$ is the eigenvalue corresponding to $e_{n_0}^{(1)}$ in $(\ref{pre-cond})$ with $n=n_0$.
    \item[]
\end{enumerate}
    \item[]
    \item \textit{Assumptions on the distribution}\label{assII}. 
Let $\Omega$ be a bounded domain of unit volume, containing the particles $D_{n_{\ell}}$, with $n=1, \cdots, \aleph$ and $\ell = 1, \cdots, K+1$; $K \in \mathbb{N}$. We divide $\Omega$ into $\aleph:=[d^{-3}]$ { periodically distributed} subdomains $\Omega_m$ such that $|\Omega_m|=\mathcal{O}(d^3)$, $m=1, 2, \cdots, [d^{-3}]$, i.e. $D \, := \, \overset{\aleph}{\underset{m=1}\cup} \, \overset{K+1}{\underset{\ell=1}\cup} D_{m_{\ell}}$ (and then $M=\aleph \times (K+1)$). For each subdomain $\Omega_m$, we further divide $\Omega_{m}$ into $K+1$ sub-subdomains $\Omega_{m_\ell}$, $\ell=1,2,\cdots, K+1$, $K\in \mathbb{N}$, such that $z_{m_\ell}\in D_{m_\ell}\subset\Omega_{m_\ell}\subset \Omega_{m}$ with $z_{m_\ell}$ centering at $\Omega_{m_\ell}$. In particular, let $z_{m_1}\in \Omega_{m_1}\subset\Omega_{m}$ center at $\Omega_{m}$, $m=1, 2, \cdots, [d^{-3}]$. The local distribution of the sub-subdomains $\Omega_{m_\ell}\subset \Omega_{m}$ is unified in terms of the subdomains, i.e. independent on $m$. Therefore, we denote $\Omega_{0}$ as the reference set, and the distribution of $\Omega_{m}$'s is constructed by appropriate translations of $\Omega_{0}$, which indicates that $\Omega_{m}$ are distributed periodically in a global view, see \textbf{Figure \ref{fig:local}} and \textbf{Figure \ref{fig:global}} for the schematic illustrations of the local and global distributions of the cluster of particles, respectively. 
\medskip
\phantom{}
\begin{figure}[htbp]
	\centering
	\begin{minipage}[t]{0.48\textwidth}
		\centering
		\includegraphics[width=3cm]{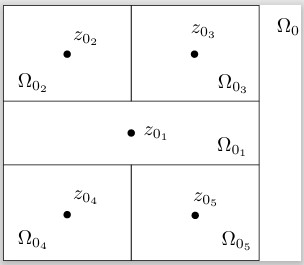}
		\caption{Schematic illustration of the local distribution of the particles in $\Omega_{0}$.}
		\label{fig:local}
	\end{minipage}
	\begin{minipage}[t]{0.48\textwidth}
		\centering
		\includegraphics[width=5cm]{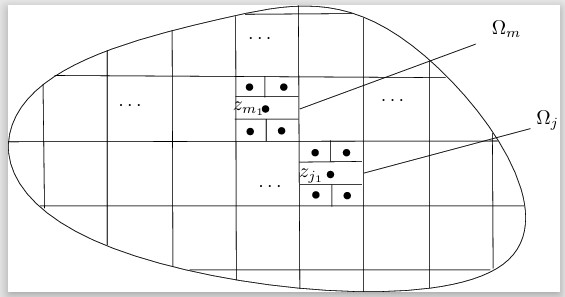}
		\caption{Schematic illustration of the global distribution of the particles.}
		\label{fig:global}
	\end{minipage}
 \caption{A visual representation of the distribution of nano-particles globally and locally.}
 \label{Fig3}
\end{figure}
\end{enumerate}  
The schematizations given by \textbf{Figure \ref{Fig3}} show the globally periodic distribution and the locally non-periodic distribution.
\end{assumptions}
\medskip
\begin{remark} 
The remarks that follow are in order.
\begin{enumerate}
    \item[]
    \item Only the Assumptions $(\ref{assIa}), (\ref{assIb})$ and $(\ref{assIc})$  are needed to derive the Foldy-Lax approximation, see \textbf{Proposition \ref{prop-discrete}} and its proof given in \cite{CGS}. The Assumption $(\ref{assII})$
    is  necessary to explain the effective medium approximation, see $(\ref{KTMB})$.  
    \item[]
    \item There are several shapes that can be considered when cutting $\Omega$ into $\overset{\aleph}{\underset{m=1}\cup} \Omega_{m}$, see Assumptions $(\ref{assII})$. Examples of shapes that can be mentioned include cubes, pyramids, rectangular parallelepipeds and hexagonal prisms. It will be revealed later how $\Omega$'s cut affects the derived formulas, see $(\ref{DefT1})$ and \textbf{Remark \ref{RemarkCut}}. 
    \item[] 
\end{enumerate}
\end{remark}
\medskip
To write short formulas in the text, we introduce the following parameter
\begin{equation}\label{def-xi}
\xi \, := \, \frac{\eta_0 \; k^2}{c_0 \; c_r^{3}}.
\end{equation}
Likewise, in case there are no ambiguities, we use the notation 
\begin{equation*}
    z_{m} \; := \; z_{m_{1}}, \quad \text{for} \quad 1 \leq m \leq [d^{-3}], 
\end{equation*}
to write technical steps without any confusion.
\medskip
\phantom{}
\newline
\begin{definition}\label{DefM}
In the text, for each subdomain $\Omega_{m}$, we refer to a local distribution tensor called $\mathcal{A}$ and describe it as
    \begin{equation}\label{SKM}
        \mathcal{A} \, := \, \left(I_{3}, \cdots , I_{3} \right) \cdot \mathbb{T}_{1}^{-1} \cdot \begin{pmatrix}
            I_{3} \\
            \vdots \\
            I_{3}
        \end{pmatrix},
    \end{equation}
    where $\mathbb{T}_1$ is the tensor given by\footnote{The dimension of each matrix $\left\{ \Lambda_{i,j} \right\}_{i,j=1}^{K+1}$ in the definition below is $3 \times 3$.}
    \begin{eqnarray*}
        \mathbb{T}_1 \, := \, \begin{pmatrix}
            \Lambda_{1,1} & \cdots & \Lambda_{1,K+1} \\
            \vdots & \ddots & \vdots \\
            \Lambda_{K+1,1} & \cdots & \Lambda_{K+1,K+1}
        \end{pmatrix} ,
    \end{eqnarray*}
    with 
    \begin{equation*}
        \Lambda_{i,i} \, := \,
			I_{3} \, - \, \pm  \, \xi \, \dfrac{1}{\left\vert \Omega_{0} \right\vert} \int_{\Omega_{0}} \nabla {\bf M}_{\Omega_{0}}\left( I_{3} \right)(x) \, dx \cdot {\bf P_0}
    \end{equation*}
    and 
    \begin{equation*}
        \Lambda_{i,j} \, := \, 
                \mp  \, \xi \, d^{3} \, \Upsilon_{0}(z_{0_i}, z_{0_j})\cdot{\bf P_0} \, - \, \pm  \, \xi \, \dfrac{1}{\left\vert \Omega_{0} \right\vert} \, \int_{\Omega_{0}} \nabla {\bf M}_{\Omega_{0}}\left( I_{3} \right)(x) \, dx \cdot {\bf P_0}, \quad  \text{for} \quad i \neq j,
    \end{equation*}
 where the constant tensor ${\bf P_0}$ is given by $(\ref{defP0})$, the kernel $\Upsilon_{0}(x,y) \, = \, \nabla \nabla  \Phi_{0}(x,y)$, with $x \neq y$, and $\nabla {\bf M}_{\Omega_{0}}$ is the Magnetization operator
 \begin{equation}\label{DefMagnetization}
      \nabla {\bf M}_{\Omega_{0}}\left( E \right)(x) := \underset{x}{\nabla} \, \int_{\Omega_{0}} \underset{y}{\nabla} \Phi_{0}(x,y) \cdot E(y) \, dy, \quad x \in \Omega_{0}.
 \end{equation}
In addition, we set 
    \begin{equation*}
        \left(\widetilde{\mathcal{A}}_{1}, \cdots, \widetilde{\mathcal{A}}_{K+1} \right) \, := \, \left(I_{3}, \cdots, I_{3} \right) \cdot \mathbb{T}_{1}^{-1}.
    \end{equation*}
\noindent From the above relation and (\ref{SKM}), it is clear that  
\begin{equation*}\label{tildeA-A}
\mathcal{A} \, = \, \overset{K+1}{\underset{\ell=1}\sum} \, \widetilde{\mathcal{A}}_{\ell}.
\end{equation*}
\end{definition}
In its definition, we observe that the tensor (\ref{SKM}) depends on both the geometry of each nano-particle and their local interactions in each subdomain $\Omega_m$. Indeed, the $D_{m_{\ell}}$'s shapes affect the formula throughout the tensor ${\bf{P}}_0$, see $(\ref{DaBz})$ and $(\ref{defP0})$. This tensor is not connected to the $D_{m_{\ell}}$'s interactions. However, knowing that each subdomain $\Omega_m$ contains $(K+1)$ nano-particles, the definition provided here describes the interactions between the nano-particles $D_{m_{\ell}}$ in each $\Omega_m$ throughout the tensors $\Upsilon_{0}(z_{0_{i}}, z_{0_j})$, with $i, j = 1, \cdots, K+1$, see the second term appearing in the definition of $\mathbb{T}_1$. In \textbf{section \ref{GIEGAS}}, we provide more details on the construction of the local distribution tensor $\mathcal{A}$, and we show that it is independent on the index $m$. In addition, in \textbf{Section \ref{Appendix}}, for the case of two closely distributed nano-particles, we explicitly compute the expression of the tensor $\mathcal{A}$.
\bigskip
\newline
We start with the proposition below.
\begin{proposition}\cite[Theorem 1.3]{CGS}\label{prop-discrete}
	Consider the electromagnetic scattering problem \eqref{model-m} generated by a cluster of nano-particles $D_1, \cdots, D_M$. Under \textbf{Assumptions} $(1)$, for $\frac{9}{11} \, < \, h \, < \, 1$, the {electric} far-field of the scattered wave possesses the expansion as
	\begin{equation}\label{approximation-E}
	E^\infty(\hat{x}) \, = \,  - \frac{i\, k^3\, \eta}{4\, \pi}\,  \sum_{n=1}^{M} \, e^{-i \, k \, \hat{x} \cdot z_{n}} \hat{x} \times Q_n+\mathcal{O}\left(a^{\frac{h}{3}}\right),
	\end{equation}
	{uniformly in all directions $\hat{x} := \frac{x}{\left\vert x \right\vert} \, \in \, \mathbb{S}^2$},
	where 
	$\left( Q_n \right)_{n=1,\cdots, M}$ is the vector solution to the following algebraic system
	\begin{equation}\label{linear-discrete}
	Q_{n}-\frac{\eta \, k^2}{\pm c_0} \, a^{5-h} \, \sum_{s=1 \atop s \neq n}^{M} {\bf{P}}_0 \cdot \Upsilon_k(z_{n}, z_s) \cdot {Q}_s
	=\frac{i \, k}{\pm c_0} \, a^{5-h} \; {\bf{P}}_0 \cdot  H^{Inc}(z_{n}),
	\end{equation}
	where $\Upsilon_k(\cdot,\cdot)$ is the dyadic Green's function given by
	\begin{equation*}\label{dyadic-Green}
	\Upsilon_k(x,z) \, := \, \underset{z}{\nabla}\underset{z}{\nabla}\Phi_k(x,z) \, + \, k^2 \, \Phi_k(x,z) \,  I_{3},\quad x\neq z,
	\end{equation*}
	with 
 \begin{equation}\label{fundsolheleq}
  \Phi_k(x, z) \; := \; \dfrac{e^{i \, k \, \left\vert x - z \right\vert}}{4 \, \pi \, \left\vert x - z \right\vert}, \qquad \text{for} \qquad x \neq z,    
 \end{equation}
  being the fundamental solution to the Helmholtz equation, and ${\bf{P}}_0$ is the polarization tensor defined by $(\ref{defP0})$. In particular, \eqref{linear-discrete} is invertible under the condition that
	\begin{equation*}
	\frac{ k^2 |\eta| a^5}{d^3 \left\vert 1 - k^2 \, \eta \, a^2 \, \lambda_{n_{0}}^{(1)}(B) \right\vert} \left\vert {\bf{P}}_0 \right\vert \, < \, 1.
	\end{equation*}
\end{proposition}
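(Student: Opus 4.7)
The plan is to start from the Lippmann--Schwinger vector integral equation for the electric field inside the cluster,
\begin{equation*}
E(x) - k^{2}\eta\int_{D}\Phi_{k}(x,y)E(y)\,dy + \eta\,\nabla\int_{D}\nabla\Phi_{k}(x,y)\cdot E(y)\,dy = E^{Inc}(x),\quad x\in D,
\end{equation*}
and to combine it with the dipole far-field representation of $E^{s}$, which expresses $E^{\infty}(\hat{x})$ as a sum over $n$ of terms involving $e^{-ik\hat{x}\cdot z_{n}}\hat{x}\times Q_{n}$ for suitable vector moments $Q_{n}$ supported on $D_{n}$. The task then reduces to computing the $Q_{n}$ asymptotically as $a\to 0$.

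After translating each $D_{m_{\ell}} = aB + z_{m_{\ell}}$ to $B$ and rescaling, the interior operator acting on $E$ becomes $I - \eta a^{2}(k^{2}\mathbf{N}_{B} - \nabla\mathbf{M}_{B})$, whose gradient-of-Newtonian part has, over $\mathbb{H}_{0}(\div=0)$, the eigensystem $(\lambda_{n}^{(1)}(B),e_{n}^{(1)})_{n}$ from Assumption (a). The resonance condition of Assumption (c), $1-k^{2}\eta a^{2}\lambda_{n_{0}}^{(1)}(B) = \pm c_{0}a^{h}$, singles out the subspace spanned by $e_{n_{0}}^{(1)}$ as the unique one resonating at scale $a^{-h}$. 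Because $e_{n_{0}}^{(1)}=\mathrm{Curl}\,\phi_{n_{0}}$ with $\nu\times\phi_{n_{0}}=0$, testing the integral equation against $\phi_{n_{0}}$ and integrating by parts converts the resonant contribution into integrals of $\phi_{n_{0}}$; combined with the Maxwell identity $\mathrm{Curl}\,E^{Inc} = ikH^{Inc}$, this is what introduces both the polarization tensor $\mathbf{P}_{0}$ of \eqref{defP0} and the magnetic incident field $H^{Inc}$ on the right-hand side of \eqref{linear-discrete}. Cross-particle interactions then enter through the full dyadic kernel $\Upsilon_{k}(z_{n},z_{s})$: since $|z_{n}-z_{s}|\geq d\gg a$, one may Taylor expand $\Upsilon_{k}(x,y)$ about $(z_{n},z_{s})$ with relative error $O(a/d)$. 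Collecting these relations for $n=1,\dots,M$ produces the algebraic system \eqref{linear-discrete}.

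The main technical obstacle is the simultaneous control of two accumulating error sources: the non-resonant eigenmodes contribute $O(a^{3})$ per particle and the truncated interaction kernel contributes $O(a/d)$ per pair, while $M \sim d^{-3} \sim a^{-3+h}$. Balancing these against the $a^{5-h}$ normalization in the far-field sum is delicate and forces exactly the range $\tfrac{9}{11}<h<1$ stated in the proposition. Invertibility of \eqref{linear-discrete} is then obtained by a Neumann-series / diagonal-dominance argument: the off-diagonal block operator has norm bounded by $\tfrac{k^{2}|\eta|a^{5}}{|c_{0}|a^{h}}|\mathbf{P}_{0}|\sup_{n}\sum_{s\neq n}|\Upsilon_{k}(z_{n},z_{s})|$, and the Riemann-sum estimate $d^{3}\sum_{s\neq n}|\Upsilon_{k}(z_{n},z_{s})|=O(1)$ combined with $1-k^{2}\eta a^{2}\lambda_{n_{0}}^{(1)}(B)=\pm c_{0}a^{h}$ reproduces the stated sufficient condition. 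Substituting the resulting $(Q_{n})$ into the far-field representation yields \eqref{approximation-E}.
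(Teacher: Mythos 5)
This proposition is not proved in the paper at all: it is imported verbatim from \cite[Theorem 1.3]{CGS}, and the authors explicitly defer its proof to that reference. So the only comparison possible is with the strategy of the cited work, and your outline does reproduce its broad shape (Lippmann--Schwinger equation, rescaling to $B$, spectral analysis relative to the Helmholtz decomposition \eqref{hel-decomp}, the resonance condition of Assumption (c) selecting $e_{n_0}^{(1)}$, testing with $\phi_{n_0}$ and using $\mathrm{Curl}\,E^{Inc}=ikH^{Inc}$ to make ${\bf P}_0$ and $H^{Inc}$ appear, dipole approximation of the inter-particle coupling, and a perturbation argument for invertibility).

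As a proof, however, there are genuine gaps. First, the scaling and spectral bookkeeping is off: the Magnetization operator $\nabla{\bf M}$ is homogeneous of degree zero, so after rescaling the interior operator is $I+\eta\,\nabla{\bf M}_B-k^2\eta a^2{\bf N}_B$, not $I-\eta a^2(k^2{\bf N}_B-\nabla{\bf M}_B)$; moreover $(\lambda_n^{(1)},e_n^{(1)})$ is the eigensystem of the \emph{Newtonian} operator restricted to $\mathbb{H}_0(\div=0)$, on which $\nabla{\bf M}$ vanishes, and it is precisely the largeness of $\eta\,\nabla{\bf M}$ (of order $a^{-2}$) on the complementary subspaces that forces the leading part of the field into $\mathbb{H}_0(\div=0)$ --- your formulation attributes the eigensystem to the wrong operator and loses this mechanism. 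Second, the heart of the theorem is the quantitative error analysis: the non-resonant remainders and the point-approximation errors must be summed over $M\sim a^{h-3}$ particles while the resonant mode is amplified by $a^{-h}$, and one must show the total is $\mathcal{O}(a^{h/3})$ uniformly in $\hat{x}$; you only assert that this "balancing" yields $\tfrac{9}{11}<h<1$, quoting per-particle and per-pair error orders without derivation, whereas closing this step requires a priori $\ell^2$-estimates on $(Q_n)$ coming from the invertibility of \eqref{linear-discrete} and a near/far splitting of the particle sums. Finally, the invertibility argument as you state it does not work: $d^{3}\sum_{s\neq n}|\Upsilon_k(z_n,z_s)|$ is not $\mathcal{O}(1)$, since $\sum_{s\neq n}|z_n-z_s|^{-3}$ over $\sim d^{-3}$ points in a bounded domain carries a logarithmic factor in $d$, so plain diagonal dominance does not reproduce the stated condition $\frac{k^2|\eta|a^5}{d^3|1-k^2\eta a^2\lambda_{n_0}^{(1)}(B)|}|{\bf P}_0|<1$ without a finer argument.
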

\subsection{The effective medium and the generated fields}
Based on \textbf{Assumptions} and the Foldy-Lax approximation presented in \textbf{Proposition \ref{prop-discrete}}, we will investigate the effective medium generated by a globally periodic and locally non-periodic distributed all-dielectric nano-particles. We demonstrate that the effective medium is merely a modification of permeability, via a tensor that encodes the local non-periodic distribution. We refer to $\mathring{\epsilon}_{r}$ and $\mathring{\mu_{r}}$ as the effective relative permittivity and permeability, respectively.

\begin{proposition}\label{Propomu+}
The effective permeability $\mathring \mu_r$ possesses a positive sign under the condition 
    \begin{equation}\label{Cdt+mu}
        \left\Vert {\bf{P}}_0 \right\Vert_{\ell^{2}} \; \left\Vert \mathcal{A} \right\Vert_{\ell^{2}} \; \leq \; \dfrac{1}{3 \, \xi},
    \end{equation}
    where the tensor $\mathcal{A}$ is defined by $(\ref{SKM})$ and ${\bf{P}}_0$ is the tensor defined by $(\ref{defP0})$. \medskip
Furthermore, under the condition 
\begin{equation}\label{better-condition}
       \left\Vert {\bf P_0} \right\Vert_{\ell^{2}} \;  \left\Vert \mathcal{A} \right\Vert_{\ell^{2}} \, < \, \frac{\pi}{3 \, \xi \, \sqrt{6} \, \left( \pi \, + \, k \, \left\vert \Omega \right\vert \right) },
\end{equation}
 the magnetic field $H^{\mathring{\mu}_r}$ admits the $C^{0,\alpha}(\overline{\Omega})$-regularity, for any $\alpha \in (0, 1)$. 
\end{proposition}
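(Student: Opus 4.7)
The plan is first to extract the algebraic form of $\mathring \mu_r$ produced by the effective-medium construction, then to reduce the positivity claim under \eqref{Cdt+mu} to a Neumann-type smallness argument, and finally to obtain the Hölder regularity of $H^{\mathring \mu_r}$ via a vector Lippmann--Schwinger equation combined with standard mapping properties of the volume Newton potential.

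I would start by recording the explicit form of $\mathring \mu_r$ supplied by the derivation sketched in \textbf{Definition \ref{DefM}}: rewriting the Foldy--Lax system \eqref{linear-discrete} per subdomain $\Omega_m$ and passing to the continuum limit through the local tensor $\mathcal{A}$, one obtains a $3\times 3$ matrix of the form
$$\mathring \mu_r \,=\, I_3 \,\pm\, \xi\, {\bf P_0}\cdot \mathcal{A},$$
where the sign is inherited from Assumption I.c. Once this identity is in hand, the first claim follows by submultiplicativity of the spectral norm and \eqref{Cdt+mu}:
$$\bigl\|\pm\,\xi\,{\bf P_0}\cdot \mathcal{A}\bigr\|_{\ell^2} \,\leq\, \xi\,\|{\bf P_0}\|_{\ell^2}\,\|\mathcal{A}\|_{\ell^2} \,\leq\, \tfrac{1}{3} \,<\, 1,$$
so the eigenvalues of $\mathring \mu_r - I_3$ lie strictly inside the open disk of radius $1/3$, the eigenvalues of $\mathring \mu_r$ lie in $(2/3, 4/3)$, and $\mathring \mu_r$ is positive definite. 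The factor $3$ in \eqref{Cdt+mu} comes from the norm-equivalence cost of moving between the block $\ell^2$-norm used to measure $\mathcal{A}$ and the spectral norm of the resulting $3\times 3$ matrix.

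For the Hölder regularity under \eqref{better-condition}, I would use the vector Lippmann--Schwinger equation satisfied by $H^{\mathring \mu_r}$ on $\Omega$, whose integral operator couples the contrast $\mathring \mu_r - I_3$ to the dyadic kernel $\Upsilon_k \,=\, \nabla\nabla \Phi_k \,+\, k^2 \Phi_k\, I_3$. I would split $\nabla\nabla \Phi_k \,=\, \nabla\nabla \Phi_0 \,+\, \nabla\nabla(\Phi_k - \Phi_0)$, recognising in the first summand the Magnetization operator \eqref{DefMagnetization} already built into $\mathcal{A}$ (after extracting its Calderón--Zygmund principal-value identity contribution), and noting that the remainder $\nabla\nabla(\Phi_k - \Phi_0)$ and the piece $k^2\Phi_k I_3$ have weakly singular and bounded kernels respectively. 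An $L^\infty(\Omega)$-estimate on the combined operator, using $\int_\Omega |x-y|^{-2}\,dy$ type bounds for the Magnetization piece and $\int_\Omega |\Phi_k(x,y)|\,dy \lesssim |\Omega|/(4\pi)$ for the Helmholtz piece, produces an operator norm controlled by a constant multiple of $\xi\,\|{\bf P_0}\|_{\ell^2}\|\mathcal{A}\|_{\ell^2}\,(\pi + k|\Omega|)/\pi$, with the $\sqrt 6$ in \eqref{better-condition} arising from the conversion between the $\ell^2$-norm on the symmetric $3\times 3$ kernel and its induced action on vector fields. Condition \eqref{better-condition} is precisely the threshold forcing this norm strictly below $1$, so the Neumann series yields a unique $L^\infty(\Omega)$ solution; one application of the classical fact that the Newton potential maps $L^\infty(\Omega)$ into $C^{0,\alpha}(\overline \Omega)$ for every $\alpha \in (0,1)$ then bootstraps this $L^\infty$ solution to the advertised Hölder class.

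The main obstacle throughout is the Calderón--Zygmund handling of $\nabla\nabla \Phi_0$: it must be decomposed into its averaged identity-generating contribution, which is the source of both the Magnetization matrix appearing inside $\mathcal{A}$ and of the constants $\pi$ and $\sqrt 6$ in \eqref{better-condition}, and its regular remainder, before any $L^\infty$-based fixed-point scheme is meaningful. Tracking the sharp numerical constants $\pi$, $\sqrt 6$ and $\pi + k|\Omega|$ through this decomposition is the real bookkeeping effort; once that split is in place, both parts of the proposition reduce to routine potential theory and Neumann-series estimates.
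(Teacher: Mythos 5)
Your treatment of the first claim is fine and coincides with the paper's: positivity of $\mathring\mu_r$ is indeed read off from \eqref{new-coeff} by submultiplicativity, since \eqref{Cdt+mu} forces $\Vert \pm\,\xi\,{\bf P_0}\cdot\mathcal{A}\Vert\le 1/3<1$, so $\mathring\mu_r=I_3\pm\xi\,{\bf P_0}\cdot\mathcal{A}$ cannot lose positivity; this is exactly the ``straightforward computation'' the paper invokes.

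The second part of your plan has a genuine gap. The Magnetization operator $\nabla{\bf M}^{k}$ in the Lippmann--Schwinger equation \eqref{Re16}--\eqref{Eq1901} has kernel $\nabla\nabla\Phi_0+\nabla\nabla(\Phi_k-\Phi_0)$ whose leading part is of Calder\'on--Zygmund type, $\mathcal{O}(|x-y|^{-3})$, not $\mathcal{O}(|x-y|^{-2})$: after removing the averaged (identity-generating) contribution, the principal-value remainder is still \emph{not} bounded on $\mathbb{L}^{\infty}(\Omega)$, so the Neumann-series contraction you propose in $\mathbb{L}^{\infty}$ cannot be closed by the kernel bounds you describe. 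Moreover, even granting an $\mathbb{L}^{\infty}$ solution, the classical fact that the Newton potential maps $\mathbb{L}^{\infty}$ into $C^{0,\alpha}$ does not bootstrap $H^{\mathring\mu_r}$ to H\"older regularity, because the equation feeds $H^{\mathring\mu_r}$ back through \emph{second} derivatives of the Newton potential; upgrading to $C^{0,\alpha}$ requires Schauder-type bounds $\nabla\nabla{\bf N}:C^{0,\alpha}\to C^{0,\alpha}$, i.e.\ a fixed point set up directly in the H\"older class, with constants different from the ones you track. This is also why the constants in \eqref{better-condition} are not of $\mathbb{L}^{\infty}$ origin: in the paper they come from an $\mathbb{L}^{2}$ argument, namely projecting \eqref{Eq1901} onto the three components of the Helmholtz decomposition \eqref{hel-decomp}, using $\nabla{\bf M}^{k}|_{\mathbb{H}_0(\div=0)}=0$, $\Vert\nabla{\bf M}\Vert_{\mathcal{L}(\mathbb{L}^{2};\mathbb{L}^{2})}=1$, the bounds $\Vert{\bf N}^{k}\Vert\lesssim|\Omega|c_1(k)/\pi$ and $\Vert{\bf K}\Vert\le|\Omega|c_2(k)/\pi$, the factor $\sqrt{3}$ from the matrix--vector estimate, and then summing the squares of the three inequalities \eqref{IneqI}, \eqref{IneqII}, \eqref{IneqIII} (whence the $\sqrt{6}$ and $\pi+k|\Omega|$); this yields invertibility and positivity of the operator under \eqref{better-condition}, while the $C^{0,\alpha}(\overline{\Omega})$ regularity itself is then obtained as in \cite[Proposition 2.1]{cao2023all} rather than re-derived by potential theory in $\mathbb{L}^{\infty}$. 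To repair your argument you would either have to work in $C^{0,\alpha}$ from the start (Schauder estimates for $\nabla\nabla\Phi_0$) or follow the paper's $\mathbb{L}^{2}$ Helmholtz-projection route, which is precisely what makes the unit bound on $\nabla{\bf M}$ available and sidesteps the $\mathbb{L}^{\infty}$ unboundedness of the singular integral.
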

\begin{proof}
A straightforward computation using the expression of $\mathring \mu_r$ given by $(\ref{new-coeff})$ allows us to derive $(\ref{Cdt+mu})$. Regarding the $C^{0,\alpha}(\overline{\Omega})$-regularity of the magnetic field, under certain conditions, it was already derived in \cite[Proposition 2.1]{cao2023all}. The condition (\ref{better-condition}) is more general (i.e. more relax) than the one imposed in \cite[Proposition 2.1]{cao2023all}. We refer to  
\textbf{Section \ref{PositiveOperator}}, for more details on the derivation of this result under the condition $(\ref{better-condition})$. 
\end{proof}
\begin{theorem}\label{MainThm}
Let $\Omega$ be a bounded domain with $ C^{1,\alpha}$-regularity, for $\alpha \in (0,1)$. Then under \textbf{Assumptions}, for positive definite, see for instance \textbf{Proposition \ref{Propomu+}}, there holds the following expansion for the far-field, 
\begin{eqnarray}\label{Eq113}
\nonumber
       \left\Vert E^{\infty}_{eff, +}  -          E^{\infty} \right\Vert_{\mathbb{L}^{\infty}(\mathbb{S}^2)} 
       &=& \mathcal{O}\left(  k \, \left\vert {\bf P_0} \right\vert^{2}  \, \left\vert \mathbb{T}_{1}^{-1} \right\vert \, \left( d^{\frac{12}{7}}  \, \left\Vert H^{\mathring{\mu_{r}}} \right\Vert^{2}_{\mathbb{L}^{\infty}(\Omega)} \,  + d^{2\alpha}  \, \left[ H^{\mathring{\mu_{r}}} \right]^{2}_{C^{0,\alpha}(\overline{\Omega})} \,   \right)^{\frac{1}{2}} \right) \\ &+&  \mathcal{O}\left( a^{\frac{h}{3}}\right)   +  \mathcal{O}\left(k \, d^{\alpha} \, \left\vert {\bf P_0} \right\vert \, \left\vert \mathbb{T}_{1}^{-1} \right\vert \, \left[ H^{\mathring{\mu_{r}}} \right]_{C^{0,\alpha}(\overline{\Omega})} 
  \right),
\end{eqnarray}
uniformly in all directions $\hat{x} \in \mathbb{S}^2$, where $E^{\infty}_{eff, +}(\cdot)$ is the far-field pattern associated with the following electromagnetic scattering problem for the effective medium as $a \rightarrow 0$,  
\begin{equation}\label{model-equi}
	\begin{cases}
	\mathrm{Curl}E^{{\mathring\epsilon}_r}-i k {\mathring\mu}_r H^{{\mathring\mu}_r}= 0,\quad \mathrm{Curl} H^{{\mathring\mu}_r}+i k{\mathring\epsilon_r} E^{{\mathring\epsilon}_r} = 0 \quad\mbox{in} \ \Omega,\\
	\mathrm{Curl} E^{{\mathring\epsilon}_r}-i k H^{{\mathring\mu}_r}=0,\quad\mathrm{Curl} H^{{\mathring\mu}_r}+i k E^{{\mathring\epsilon}_r}=0\quad\mbox{in}\ \mathbb{R}^3\backslash \Omega,\\
	E^{{\mathring\epsilon}_r}=E_s^{{\mathring\epsilon}_r}+E^{{\mathring\epsilon}_r}_{in}, \quad 	H^{{\mathring\mu}_r}=H_s^{{\mathring\mu}_r}+H^{{\mathring\mu}_r}_{in},\\
	\nu\times E^{{\mathring\epsilon}_r}|_{+}=\nu\times E^{{\mathring\epsilon}_r}|_{-}, \quad \nu\times H^{{\mathring\mu}_r}|_{+}=\nu\times H^{{\mathring\mu}_r}|_{-}\quad\mbox{on} \ \partial \Omega,\\
	H^{{\mathring\mu}_r}_s\times\frac{x}{|x|}-E^{{\mathring\epsilon}_r}_s=\Oh({\frac{1}{|x|^2}}),\quad \mbox{as} \ |x|\rightarrow\infty
	\end{cases},
	\end{equation} 
The effective permittivity and permeability are respectively given by
	\begin{equation}\label{new-coeff}
		\mathring{\epsilon}_r \, = \,  I_{3} \quad \text{and} \quad
		\mathring \mu_r \, = \, I_{3} \, \pm \, \xi \,  {\bf P_0} \cdot \mathcal{A} \quad \mbox{in} \quad \Omega,
	\end{equation} 
 where ${\bf P_0}$ is the tensor defined by $(\ref{defP0})$ and the tensor $\mathcal{A}$ is defined in $\textbf{Definition \ref{DefM}}$. 
\end{theorem}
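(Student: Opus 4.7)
The plan is to bridge the discrete Foldy–Lax system \eqref{linear-discrete} with the continuous Lippmann–Schwinger type system associated with the effective Maxwell problem \eqref{model-equi}, and then translate the bound on the coefficient vector into the claimed bound on the far-fields. I would proceed in four stages.

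First, I would reorganize the algebraic system \eqref{linear-discrete} using \textbf{Assumption II}. Each particle index $n$ is relabeled as a pair $(m,\ell)$, where $m = 1,\dots,\aleph$ runs over the subdomains $\Omega_m$ and $\ell = 1,\dots, K+1$ runs over the local cluster inside $\Omega_m$. The sum over $s\neq n$ is split into (a) same-subdomain interactions $s=(m,j)$, $j\neq \ell$, and (b) far-subdomain interactions $s=(m',j)$, $m'\neq m$. In the near interactions, because $|z_{m_\ell} - z_{m_j}|\sim d \ll 1/k$, I replace $\Upsilon_k$ by $\Upsilon_0$ up to $\mathcal{O}(k^2\Phi_0)$-errors; this produces the off-diagonal blocks $\Lambda_{i,j}$, $i\neq j$, of $\mathbb{T}_1$ up to the self-term coming from an on-cell averaged Magnetization. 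For the far interactions, I would approximate the remaining sum by a Riemann sum indexed by the cell centers $z_{m_1}$, with quadrature error controlled by the $C^{0,\alpha}$-modulus of the unknowns.

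Second, I would solve the local $(K+1)\times(K+1)$ block system explicitly. Stacking $(Q_{m_1},\dots,Q_{m_{K+1}})$ and isolating the local operator yields $\mathbb{T}_1\cdot (Q_{m_1},\dots,Q_{m_{K+1}})^{\top} = \xi\,\mathbf{P}_0\,\bigl(\Theta^{\mathrm{out}}_{m_1},\dots,\Theta^{\mathrm{out}}_{m_{K+1}}\bigr)^{\top} + \mathrm{RHS}(H^{Inc})$, where $\Theta^{\mathrm{out}}_{m_\ell}$ captures the action of the far subdomains. Since the local distribution is independent of $m$ (by \textbf{Assumption II}), $\mathbb{T}_1$ is the same for every cell. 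Summing over $\ell$ with the row vector $(I_3,\dots,I_3)\cdot \mathbb{T}_1^{-1}$ produces the scalar-vector quantity $\sum_\ell Q_{m_\ell}$ entering the far-field formula \eqref{approximation-E}, and brings in exactly the tensor $\mathcal{A}$ from \textbf{Definition \ref{DefM}}. At this stage the reduced system for $\sum_\ell Q_{m_\ell}$ is a discretized Lippmann–Schwinger equation whose continuous limit is, by construction, the magnetic field equation solved by $H^{\mathring{\mu}_r}$ with effective permeability $\mathring{\mu}_r = I_3 \pm \xi\, \mathbf{P}_0\cdot \mathcal{A}$.

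Third, I would carry out the quantitative comparison between the discrete and continuous problems. Let $\tilde Q_m := d^{-3}\sum_\ell Q_{m_\ell}$; the previous step gives a linear system of the form $(I - \mathcal{L}_d)\tilde Q = F_d$ with continuous counterpart $(I-\mathcal{L})H = F$. Invertibility of $(I-\mathcal{L})$ follows from \textbf{Proposition \ref{Propomu+}} under \eqref{better-condition}, which also yields $H^{\mathring{\mu}_r} \in C^{0,\alpha}(\overline{\Omega})$. The error in approximating the dyadic integral $\int_\Omega \Upsilon_k(z_m,y)\,H^{\mathring{\mu}_r}(y)\,dy$ by a midpoint-type quadrature over the cells $\Omega_{m'}$ naturally produces two contributions: a smoothness-driven term of order $d^\alpha \,[H^{\mathring{\mu}_r}]_{C^{0,\alpha}}$ from regular cells and a singular-cell contribution handled by removing a neighborhood of $z_m$ of radius $\sim d^{a}$ and estimating the remaining integral against $\|H^{\mathring{\mu}_r}\|_{L^\infty}$; optimizing the exponent gives the $d^{12/7}$ rate. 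Combining with a Neumann-series argument for $(I-\mathcal{L})^{-1}$ delivers a bound on $\tilde Q_m - H^{\mathring{\mu}_r}(z_m)$ with the structure seen in \eqref{Eq113}.

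Finally, I would insert this comparison into the far-field representation. Writing $E^\infty(\hat x) = -\tfrac{ik^3\eta}{4\pi}\sum_m e^{-ik\hat x\cdot z_m}\hat x\times \bigl(\sum_\ell Q_{m_\ell}\bigr) + \mathcal{O}(a^{h/3})$ and comparing to the far-field expression of \eqref{model-equi}, which via standard layer potentials reads $-\tfrac{ik^3}{4\pi}\int_\Omega e^{-ik\hat x\cdot y}\hat x \times\bigl((\mathring{\mu}_r - I_3)H^{\mathring{\mu}_r}(y)\bigr)\,dy$, the difference reduces to a second quadrature comparison for the same smoothness-bounded integrand. This produces the remaining $d^{2\alpha}$ and $d^{12/7}$ contributions. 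The principal obstacle I expect is the careful handling of the singularity of $\Upsilon_k$ in the Riemann-sum estimate, since the kernel is only conditionally integrable; this is what forces the specific exponent $12/7$ and requires balancing the size of an exclusion neighborhood against the near-field contribution of $\mathbf{P}_0\cdot \mathbb{T}_1^{-1}\cdot \mathbf{P}_0$. The Hölder decomposition of $H^{\mathring{\mu}_r}$ into a constant piece plus an oscillatory remainder, together with cancellation in the symmetric part of $\nabla\nabla\Phi_0$, is the mechanism I would exploit to tame this singularity.
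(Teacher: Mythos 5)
Your plan follows essentially the same route as the paper: relabel the Foldy--Lax system by cells, freeze $\Upsilon_0$ for the intra-cell interactions, absorb the cell-averaged Magnetization into the shifted tensor $\mathbb{T}_1$, reduce via $\left( I_{3},\cdots,I_{3}\right)\cdot\mathbb{T}_{1}^{-1}$ to the tensor $\mathcal{A}$ and the Lippmann--Schwinger equation for $H^{\mathring{\mu}_r}$, then compare the discrete and continuous systems using the $C^{0,\alpha}$-regularity of $H^{\mathring{\mu}_r}$ and insert the result into the far-field formulas. The only cosmetic deviations are that you perform the local block inversion at the discrete level before passing to the continuum, and you attribute the $d^{12/7}$ rate to an exclusion-neighborhood argument (and a symmetry cancellation in $\nabla\nabla\Phi_0$ that the paper does not need), whereas the paper obtains it by balancing near- and far-cell contributions in the kernel-freezing error with the choice $\beta=\tfrac{11}{14}$ and by $\mathbb{L}^2$-boundedness of the Magnetization operator; these amount to the same balancing mechanism.
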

The justification of \textbf{Theorem \ref{MainThm}} highly relies on the $C^{0, \alpha}$-regularity of the solution $(E^{\mathring{\epsilon_{r}}}, H^{\mathring{\mu_{r}}})$ to \eqref{model-equi}.

\subsection{Application to the $2$D Van der Waals heterostructures}

As mentioned earlier, the van der Waals heterostructures are built as {\it{\textbf{superposition}}} of 2D-sheets, of subwavelength nanoparticles, which are globally periodic but locally formed of heterogeneous clusters of nanoparticles. In Figure $\ref{VanDerWaals}$, we depict few examples of such sheets composed of particles of distributed on honeycomb structure, of hexagonal form see Figure $\ref{subfiga}$, or following distributions as trigonal prismatic Figure $\ref{subfigb}$, octahedral Figure $\ref{subfigc}$, chalcogenides Figure $\ref{subfigd}$ and Boron nitrids Figure $\ref{subfige}$. More structures can be found in\cite{van-der-Waals-paper, van-der-Waals-book}. The 3D-materials are build by superposing such types of sheets (composed of same type of sheets).  With such superposition, the material occupies a domain $\Omega$ that we decompose as described in Assumption $(\ref{assII})$ as periodic distribution of blocks $\Omega_m$ and each block contains $K+1$ particle. As an example, we show in Figure $\ref{subfigf}$ the $\Omega_m$'s and $K+1$ particles inside each $\Omega_m$. 
\begin{figure}[H]
     \centering
     \begin{subfigure}[b]{0.25\textwidth}
         \centering
         \includegraphics[width=\textwidth]{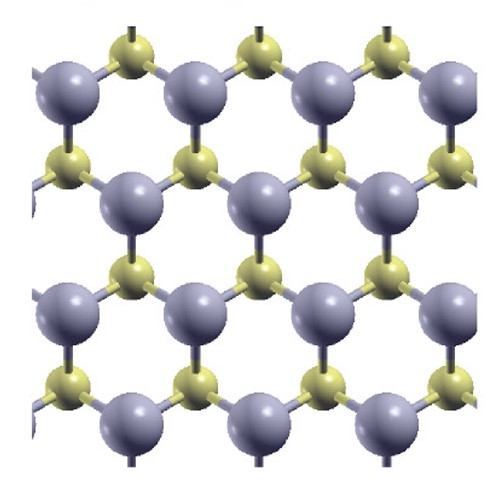}
         \caption{}
         \label{subfiga}
     \end{subfigure}
     \hfill
     \begin{subfigure}[b]{0.25\textwidth}
         \centering
         \includegraphics[width=\textwidth]{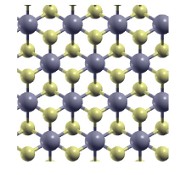}
         \caption{}
         \label{subfigb}
     \end{subfigure}
     \hfill
     \begin{subfigure}[b]{0.25\textwidth}
         \centering
         \includegraphics[width=\textwidth]{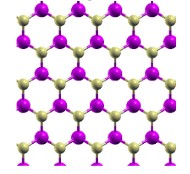}
         \caption{}
         \label{subfigc}
     \end{subfigure}
      \vfill
  \centering
     \begin{subfigure}[b]{0.25\textwidth}
         \centering
         \includegraphics[width=\textwidth]{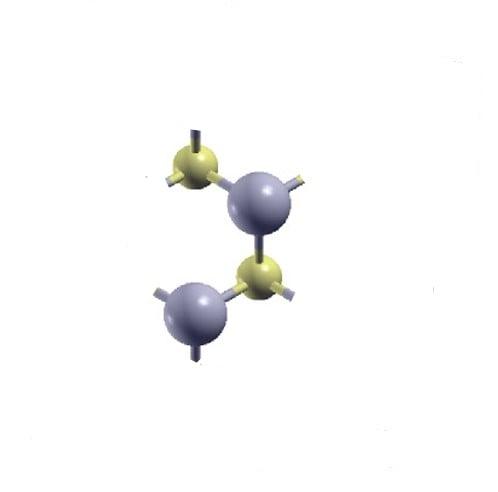}
         \caption{}
         \label{subfigd}
     \end{subfigure}
     \hfill
     \begin{subfigure}[b]{0.25\textwidth}
         \centering
         \includegraphics[width=\textwidth]{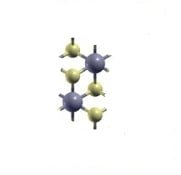}
         \caption{}
         \label{subfige}
     \end{subfigure}
     \hfill
     \begin{subfigure}[b]{0.25\textwidth}
         \centering
         \includegraphics[width=\textwidth]{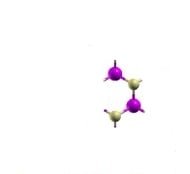}
         \caption{}
         \label{subfigf}
     \end{subfigure}    
        \caption{Van Der Waals Heterostructures. The figures $(\ref{subfiga})$, $(\ref{subfigb})$  and $(\ref{subfigc})$  show examples of 2D sheets that we can superpose to generate Van Der Waals materials. The figures $(\ref{subfigd})$, $(\ref{subfige})$ and $(\ref{subfigf})$ display the corresponding elements that can be enclosed in each $\Omega_m$ which form the decomposition of $\Omega$.}
        \label{VanDerWaals}
\end{figure}
\textit{\Large These pictures were taken from the reference \cite{van-der-Waals-book}.}
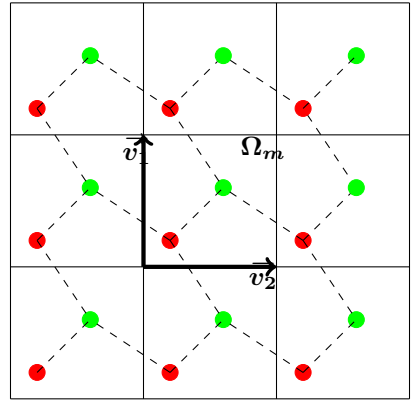
\begin{figure}[H]
    \centering
\begin{tikzpicture}[scale = 0.35]
\filldraw[red] (1,1) circle (0.3);
\filldraw[green] (3,3) circle (0.3);
\draw[dashed] (1,1) -- (3,3);
\filldraw[red] (6,1) circle (0.3);
\filldraw[green] (8,3) circle (0.3);
\draw[dashed] (6,1) -- (8,3);
\filldraw[red] (11,1) circle (0.3);
\filldraw[green] (13,3) circle (0.3);
\draw[dashed] (11,1) -- (13,3);
\filldraw[red] (1,6) circle (0.3);
\filldraw[green] (3,8) circle (0.3);
\draw[dashed] (1,6) -- (3,8);
\draw[dashed] (1,6) -- (3,3);
\draw[dashed] (6,1) -- (3,3);
\filldraw[red] (6,6) circle (0.3);
\draw[dashed] (6,6) -- (3,8);
\draw[dashed] (6,6) -- (8,3);
\filldraw[red] (1,11) circle (0.3);
\draw[dashed] (1,11) -- (3,8);
\draw[dashed] (11,1) -- (8,3);
\filldraw[green] (8,8) circle (0.3);
\draw[dashed] (6,6) -- (8,8);
\filldraw[green] (3,13) circle (0.3);
\draw[dashed] (1,11) -- (3,13);
\filldraw[red] (6,11) circle (0.3);
\filldraw[red] (11,6) circle (0.3);
\draw[dashed] (11,6) -- (13,3);
\draw[dashed] (11,6) -- (13,8);
\draw[dashed] (11,6) -- (8,8);
\draw[dashed] (6,11) -- (8,8);
\draw[dashed] (6,11) -- (3,13);
\draw[dashed] (6,11) -- (8,13);
\draw[dashed] (11,11) -- (8,13);
\draw[dashed] (13,8) -- (11,11);
\filldraw[green] (13,8) circle (0.3);
\filldraw[green] (8,13) circle (0.3);
\filldraw[red] (11,11) circle (0.3);
\filldraw[green] (13,13) circle (0.3);
\draw[dashed] (11,11) -- (13,13);
\draw (0,0) -- (0,15);
\draw (0,0) -- (15,0);
\draw (0,5) -- (15,5);
\draw (0,10) -- (15,10);
\draw (0,15) -- (15,15);
\draw (5,0) -- (5,15);
\draw (10,0) -- (10,15);
\draw (15,0) -- (15,15);
\draw[ultra thick, ->] (5,5) -- (10,5);
\draw[ultra thick, ->] (5,5) -- (5,10); 
\draw node at (9.5,9.5) {$\boldsymbol{\Omega_m}$}; 
\draw node at (9.5,4.75) {$\boldsymbol{\overset{\rightarrow}{v_{2}}}$};
\draw node at (4.75,9.5) {$\boldsymbol{\overset{\rightarrow}{v_{1}}}$};
\end{tikzpicture}
    \caption{Elongated Honeycomb. Here, the domain $\Omega$ is a cube and the decomposing subdomains $\Omega_m$'s are also cubes. In this figure, we display the sections (of square form) of $\Omega$ and $\Omega_m$'s. Each $\Omega_m$ contains $2$ particles, located at the points $z_{m_{1}}$ and $ z_{m_{2}}$, $m=1, .2... M$, (one is green and the other one red colored). The vector formed by each two points is characterized by the angle of the Honeycomb element.}
    \label{fig:enter-label}
\end{figure}
Now, we provide with more details about the effective medium for the case of honeycomb structure, see Figure \ref{fig:enter-label}. To match this structure with our analysis, we first rotate the superposed sheets in such a way that the $\Omega_m$'s have the needed form of cube. We see that, in this new view, we have two particles, in each $\Omega_m$, that are oriented according to the original honeycomb structure. Therefore, to compute the equivalent permeability, it is enough to compute it for a dimer \footnote{Dimer is referring to two nano-particles that are close to each other.}. However, the permeability $\mathring \mu_r$ defined in $(\ref{new-coeff})$ for the case of a dimer can be expressed explicitly when the tensor ${\bf P_0}$, given by $(\ref{defP0})$, is defined over the unit ball as 
\begin{equation*}
\mathring \mu_r \, = \, \alpha \, I_{3} \, - \, \beta \, \left( z_{0_{1}} - z_{0_{2}} \right) \otimes \left( z_{0_{1}} - z_{0_{2}} \right), 
\end{equation*}
where 
\begin{equation*}
    \alpha := \dfrac{\pi^{9} \, \pm \, 32 \, \xi \,  \pi^{6} \, \pm \, 3 \xi \pi^{5} \, - 192 \xi^{2} \, \pi^{3} \, \pm 384 \, \xi^{3}}{\pi^{5} \, \left[ \pi \, \left(\pi^{3} \mp 8 \, \xi \right) \pm 3 \xi \right]} \;\; \text{and} \;\;
    \beta  := \dfrac{216 \, \xi^{2} \, \left(\pi^{3} \mp 4 \xi \right)^{2}}{\pi^{5} \, d^{2} \, \left[ \pi \, \left(\pi^{3} \mp 8 \, \xi \right) \pm 3 \xi \right] \, \left[ \pi \, \left(\pi^{3} \mp 8 \, \xi \right) - \pm 6 \xi \right]},
\end{equation*}
due to $(\ref{ExpressionA})$.
\bigskip

We finish this section with few comments.
\begin{enumerate}
\bigskip

\item If we take a perfect honeycomb (with and angle of $120$ degrees), then the subdomains $\Omega_m$'s will have a shape of a parallelepiped and not cube. Therefore, the tensor $\mathcal{A}$, given in (\ref{SKM}), will have a different form and then the permeability $\mathring \mu_r$ as well. For every given, or desired to be designed, structure (as those in Fig \ref{VanDerWaals}), we need to correspond the natural subdomain $\Omega_m$'s and the elements of the  structure that are enclosed in it, After that, one can compute the tensors $\mathcal{A}$ (and ${\bf P_0}$ as well). It would be nice, and useful, to derive these values for few examples, as those displayed in Fig \ref{VanDerWaals}. In the recent years, there is a growing interest in the engineering literature for (formally) characterising the effective medium for such structures, see for instance \cite{Guanya-et-al} and the references therein. Such analysis will be done systematically in a near future.
\bigskip

\item  The analysis in this work can be done for different materials. Here, we discussed dielectric nanoparticles. However a similar study can be carried out for plasmonics or even non-dispersive material. It can also be done for bubbles or droplets in acoustics and elastic inclusions as well. The specificity of the dielectric nanoparticles, as for other dispersive materials, is that they can generate subwavelength scattering resonances. A cluster of such material generate extended low frequency resonances as well, see \cite{cao2023all} for more details. This phenomenon can be seen here as coefficients $\alpha$ or/and $\beta$, above, can be very large for singular values of $\xi$. Remember that $\xi$ is build up from the incident frequency, see (\ref{def-xi}).  Therefore, we can expect $\Omega$ to generate low-frequency all-dielectric (or low-frequency Mie-type) resonances. Similarly, the coefficients $\alpha$ or/and $\beta$ can have negative values to allow generations of (low frequency) plasmonic resonances on the extended domain $\Omega$. Such questions deserve to be studied in more details. The approach developed in \cite{cao2023all} can be useful for this purpose. 
\bigskip

\item In the presentation above we form the Van der Waals structure as superposition of sheets which are all identical. This is not necessary. We could also superpose different structured sheets. In this case, we can expect as an effective medium a stratified medium (i.e. varying in the superposition direction). We can also push further this freedom to design more elaborated structures. A typical structure is related to the Moir\'e materials, see \cite{Moire-material}, where the sheets are superposed after being rotated. We do believe that with our approach, we can characterize the corresponding equivalent medium. 
\bigskip

\item In addition to characterizing the effective mediums for such structures, several questions are naturally related. Of a particular importance is the study of the spectral properties of the  operators related to such structures. In particular when we have periodicity of the stratification. 
\end{enumerate}
\section{Proof of Theorem \ref{MainThm}}
\subsection{Rewriting the algebraic system, and formulating an integral equation, according to the distribution of the nano-particles}\label{GIEGAS}
In this subsection, we derive a general algebraic system and set its corresponding general integral equation. Afterwards, we estimate the disparity between the general algebraic system and the discretized integral equation. In addition, we justify that both the general algebraic system and the general integral equation can be reduced, in terms of complexity, to a simpler algebraic system and a simpler integral equation, that we call in the sequel Lippmann-Schwinger Equation (L.S.E). Based on 
\textbf{Proposition \ref{prop-discrete}}, for the discrete form of the Foldy-Lax approximation \eqref{approximation-E}, and the corresponding linear algebraic system \eqref{linear-discrete}, by denoting 
\begin{equation}\label{vari-substi}
	U_n=\pm c_0 \, a^{h-5}{\bf P_0^{-1}} \cdot Q_n,
\end{equation} 
we obtain 
\begin{equation*}
    	U_n \, - \, \frac{\eta\, k^2}{\pm c_0}\, a^{5-h} \sum_{p=1 \atop p \neq n}^M \Upsilon_k(z_n, z_p) \cdot {\bf P_0} \cdot U_p \, = \, i \, k\, H^{Inc}(z_n).
\end{equation*}
Then by combining with the fact that $\eta \, = \, \eta_{0}\, a^{-2}$, $\aleph = [d^{-3}]$, using $(\ref{def-xi})$ and $(\ref{distribute})$, we derive the following expression,
\begin{equation}\label{alg-1}
 U_n -  \pm \, \xi \, d^{3} \, \sum_{p=1 \atop p \neq n}^{[d^{-3}] \times (K+1)} \Upsilon_k(z_n, z_p) \cdot {\bf P_0} \cdot U_p \,=  \, i \, k\, H^{Inc}(z_n).
\end{equation}
Following the distribution assumption presented in {\bf Assumption}, we can rewrite \eqref{alg-1} in terms of the split of the sub-subdomains $\Omega_{m_\ell}$, with $m=1, 2, \cdots, [d^{-3}]$ and $\ell= 1, 2, \cdots, K+1$, as
\begin{equation*}
	U_{m_\ell} \, - \,  \pm \, \xi \, d^{3} \, \sum_{j=1}^{[d^{-3}]}\sum_{i=1\atop j_i\neq m_\ell}^{K+1}\Upsilon_{k}(z_{m_\ell}, z_{j_i}) \cdot {\bf P_0} \cdot U_{j_i} \, = \, i \, k \, H^{Inc}(z_{m_\ell}),
\end{equation*}
which indicates precisely that
\begin{equation}\label{al-discrete-pas1}
	U_{m_\ell} \, - \,  \pm \, \xi \, d^{3} \, \sum_{i=1\atop i\neq \ell}^{K+1}\Upsilon_{k}(z_{m_\ell}, z_{m_i}) \cdot {\bf P_0} \cdot U_{m_i} \, - \,  \pm \, \xi \, d^{3}  \, \sum_{j=1\atop j\neq m}^{[d^{-3}]}\sum_{i=1}^{K+1}\Upsilon_{k}(z_{m_\ell}, z_{j_i}) \cdot {\bf P_0} \cdot U_{j_i} \, = \, i \, k \, H^{Inc}(z_{m_\ell}).
\end{equation}
Considering the second term on the L.H.S of \eqref{al-discrete-pas1}, since we have\footnote{This step is merely technical. It allows us to avoid the dependency of the tensor $
     \D_m\left({\bf P_0}\mathbb{I}\right)$, given by $(\ref{TensorDP})$, on the frequency $k$.} 
\begin{equation*}
	\Upsilon_{k}(z_{m_\ell}, z_{m_i}) \, = \, \Upsilon_{0}(z_{m_\ell}, z_{m_i}) \, + \, \left(\Upsilon_{k}-\Upsilon_{0}\right)(z_{m_\ell}, z_{m_i}),
\end{equation*}
the equation $(\ref{al-discrete-pas1})$ becomes, 
\begin{eqnarray}\label{al-discrete-original}
\nonumber
	U_{m_\ell} \, &-& \, \pm \, \xi \, d^{3} \, \sum_{i=1\atop i\neq \ell}^{K+1}\Upsilon_{0}(z_{m_\ell}, z_{m_i}) \cdot {\bf P_0} \cdot U_{m_i} \\ &-& \, \pm \, \xi \, d^{3} \,  \sum_{j=1\atop j\neq m}^{[d^{-3}]}\sum_{i=1}^{K+1}\Upsilon_{k}(z_{m_\ell}, z_{j_i}) \cdot {\bf P_0} \cdot U_{j_i} \, = \, i \, k \, H^{Inc}(z_{m_\ell}) \, + \, Term_{m_{\ell}},
\end{eqnarray}
where 
\begin{equation}\label{Termml}
    Term_{m_{\ell}} \,= \, \pm \, \xi \, d^{3} \, \sum_{i=1\atop i\neq \ell}^{K+1} \left( \Upsilon_{k} \, - \, \Upsilon_{0}\right)(z_{m_\ell}, z_{m_i}) \cdot {\bf P_0} \cdot U_{m_i}.
\end{equation}
 Then, by using the Taylor expansion for the incident field, we derive from $(\ref{al-discrete-original})$ the following equation
 \begin{equation}\label{al-dis-matrix}
 	\left(\mathbb{I} \, - \, \pm \, \xi \, d^{3} \, \D_m\left({\bf P_0}\mathbb{I}\right)\right) \cdot \U_m \, - \, \pm \, \xi \, d^{3} \,  \sum_{j=1 \atop j \neq m}^{[d^{-3}]} \, \C_{mj}\left({\bf P_0}\mathbb{I}\right)_{k} \cdot \U_j \, = \, i \, k \, \hh^{Inc}_{m} \, + \, Error_m^{(1)},
 \end{equation}
 where $\mathbb{I} \, := \, \mathbb{I}_{3(K+1)\times 3(K+1)}$ is the identity matrix, $\D_m\left({\bf P_0}\mathbb{I}\right)$ is the tensor given by 
 \begin{equation}\label{TensorDP}
     \D_m\left({\bf P_0}\mathbb{I}\right) \, := \, \left(\begin{array}{cccc}
 	0& \Upsilon_{0}(z_{m_1}, z_{m_2})\cdot{\bf P_0}  & \cdots  & \Upsilon_{0}(z_{m_1}, z_{m_{K+1}})\cdot{\bf P_0} \\ 
 	\Upsilon_{0}(z_{m_2}, z_{m_1})\cdot{\bf P_0}&0  &\cdots  & \Upsilon_{0}(z_{m_2}, z_{m_{K+1}})\cdot{\bf P_0} \\ 
 	\vdots& \vdots & \ddots &\vdots  \\ 
 	\Upsilon_{0}(z_{m_{K+1}}, z_{m_1})\cdot{\bf P_0}&\Upsilon_{0}(z_{m_{K+1}}, z_{m_2})\cdot{\bf P_0} &\cdots  &0 
 	\end{array} 
 	\right),
 \end{equation}
 the tensor $\C_{mj}\left({\bf P_0}\mathbb{I}\right)_{k}$ is given by 
 \begin{equation}\label{TensorCmj}
     \C_{mj}\left({\bf P_0}\mathbb{I}\right)_{k} \, := \, \left(
 	\begin{array}{cccc}
 	\Upsilon_{k}(z_{m_1}, z_{j_1}) \cdot {\bf P_0}& \Upsilon_{k}(z_{m_1}, z_{j_2}) \cdot {\bf P_0} &\cdots  & \Upsilon_{k}(z_{m_1}, z_{j_{K+1}}) \cdot {\bf P_0} \\ 
 	\Upsilon_{k}(z_{m_2}, z_{j_1}) \cdot {\bf P_0}& \Upsilon_{k}(z_{m_2}, z_{j_2}) \cdot {\bf P_0} &\cdots  & \Upsilon_{k}(z_{m_{2}}, z_{j_{K+1}}) \cdot {\bf P_0} \\ 
 	\vdots&\vdots  &\ddots  &\vdots  \\ 
 	\Upsilon_{k}(z_{m_{K+1}}, z_{j_1}) \cdot {\bf P_0}&\Upsilon_{k}(z_{m_{K+1}}, z_{j_2}) \cdot {\bf P_0}  &\cdots  &\Upsilon_{k}(z_{m_{K+1}}, z_{j_{K+1}}) \cdot {\bf P_0} 
 	\end{array} 
 	\right),
 \end{equation}
 showing the $(K+1)^{2}$-blocks interactions between the nano-particles located in $\Omega_{j}$ and $\Omega_{m}$, with $j \neq m$. Moreover, the vectors $\mathbb{U}_m$, $\hh^{Inc}_m$ and $Error_m^{(1)}$, are given by 
 \begin{equation}\label{notation-tensor}
 \mathbb{U}_m \, := \, \left(\begin{array}{c}
	U_{m_1}\\ 
	\vdots\\ 
	U_{m_{K+1}}
	\end{array} \right) \quad \text{,} \quad  \hh^{Inc}_m \, := \, \left(\begin{array}{c}
	H^{Inc}(z_{m_1})\\ 
	\vdots\\ 
	H^{Inc}(z_{m_{1}})
	\end{array} \right) \quad \text{and} \quad
    Error_m^{(1)} \, := \,  \begin{pmatrix}
 	    Term_{m_{1}}^{\star} \\
            \vdots \\
            Term_{m_{K+1}}^{\star}
 	\end{pmatrix},
 \end{equation}
 where 
 \begin{equation}\label{Termmlstar}
     Term_{m_{\ell}}^{\star} \, := \, Term_{m_{\ell}} \, + \, i \, k \, \left( H^{Inc}(z_{m_{\ell}}) \, - \,  H^{Inc}(z_{m_{1}}) \right), \quad 1 \leq \ell \leq K+1. 
 \end{equation}
  In addition, since the subdomains $\Omega_{m}$, $m=1, \cdots, [d^{-3}]$, are distributed periodically in terms of the reference set $\Omega_{0}$, we know that $\D_m$ are equivalent, and we denote the unified tensor as $\D_0:=\D_m$, $m=1, \cdots, [d^{-3}]$. Then in \eqref{al-dis-matrix}, by denoting 
\begin{equation}\label{def-T0-matrix}
	\mathbb{T}_0 \, := \, \mathbb{I} \, - \, \pm \, \xi \, d^{3} \,  \D_0({\bf P_0}{\mathbb{I}}),
\end{equation}
we get the following general algebraic system (G.A.S)
\begin{equation}\label{al-dis1}
	\mathbb{T}_0 \cdot \U_m \, - \, \pm \, \xi \, d^{3} \, \sum_{j=1 \atop j \neq m}^{[d^{-3}]}\C_{mj}({\bf P_0}\mathbb{I})_{k} \cdot \U_j \, = \, \, i \, k \, \hh^{Inc}_m \, + \, Error_m^{(1)}.
\end{equation}
Next, our aim is to associate $(\ref{al-dis1})$ with a corresponding general integral equation. To do this, inspired by $(\ref{al-dis1})$ and \cite[Section 3]{Cao-Sini}, we set up the following general integral equation (G.I.E)
\begin{equation}\label{G.I.E}
    \mathbb{T}_1 \cdot \mathcal{F}(x)  \, - \, \pm  \, \xi \, \int_{\Omega} \, \C\left({\bf P_0}\mathbb{I}\right)_{k}(x,y) \cdot \mathcal{F}(y) \, dy 
       \,  = \, i \, k  \, \mathcal{H}^{Inc}(x), \quad  \text{in} \quad \Omega,
\end{equation}
where the matrix-kernel $\C\left({\bf P_0}\mathbb{I}\right)_{k}(\cdot,\cdot)$ is given by 
\begin{equation}\label{Re344}
    \C\left({\bf P_0}\mathbb{I}\right)_{k}(x,y) \, := \, \begin{pmatrix}
        \Upsilon_{k}(x,y) \cdot {\bf P_0} & \cdots & \Upsilon_{k}(x,y) \cdot {\bf P_0} \\
        \vdots & \ddots & \vdots \\
        \Upsilon_{k}(x,y) \cdot {\bf P_0} & \cdots & \Upsilon_{k}(x,y) \cdot {\bf P_0}
    \end{pmatrix}, \quad x \neq y,
\end{equation}
and the tensor $\mathbb{T}_1$ is given by
\begin{equation}\label{DefT1}
    \mathbb{T}_1 \, := \, \mathbb{T}_0 \, - \, \pm  \, \xi \,  \frac{1}{\left\vert \Omega_{0} \right\vert} \, \int_{\Omega_{0}}  \, \begin{pmatrix}
        \nabla {\bf M}_{\Omega_{0}}\left( I_{3} \right)(x) \cdot {\bf P_0} & \cdots &  \nabla {\bf M}_{\Omega_{0}}\left( I_{3} \right)(x) \cdot {\bf P_0} \\
        \vdots & \ddots & \vdots \\
        \nabla {\bf M}_{\Omega_{0}}\left( I_{3} \right)(x) \cdot {\bf P_0} & \cdots &  \nabla {\bf M}_{\Omega_{0}}\left( I_{3} \right)(x) \cdot {\bf P_0}
    \end{pmatrix} \, dx,
\end{equation}
which is a shift of $\mathbb{T}_0$, defined by $(\ref{def-T0-matrix})$, with $\nabla {\bf M}_{\Omega_{0}}$ is the Magnetization operator defined by $(\ref{DefMagnetization})$. The shift's origin will be explained later, refer to $(\ref{Delta2m})-(\ref{ShiftCoeff})$. Besides, 
\begin{equation}\label{Re34}
    \mathcal{H}^{Inc} \, := \, \begin{pmatrix}
        H^{Inc} \\
        \vdots \\
        H^{Inc} 
    \end{pmatrix}  \quad \text{and} \quad \mathcal{F} \, := \, \begin{pmatrix}
        F_{1} \\
        \vdots \\
        F_{K+1} 
    \end{pmatrix}, \quad \text{with} \quad F_{j} \in \mathbb{C}^{1 \times 3}.  
\end{equation}
Now, using the definition of the matrix-kernel $\C\left({\bf P_0}\mathbb{I}\right)_{k}(\cdot,\cdot)$, see $(\ref{Re344})$, and the definition of the vector $\mathcal{F}(\cdot)$, see  $(\ref{Re34})$, we rewrite $(\ref{G.I.E})$ as, 
\begin{equation}\label{Re10}
 \mathbb{T}_{1} \cdot  \mathcal{F}(x) \, - \, \pm  \, \xi  \, \int_{\Omega} \, \begin{pmatrix}
     \Upsilon_{k}(x,y) \cdot {\bf P_0} \cdot \overset{K+1}{\underset{\ell=1}{\sum}} F_{\ell}(y)  \\ 
     \vdots \\
     \Upsilon_{k}(x,y) \cdot {\bf P_0} \cdot \overset{K+1}{\underset{\ell=1}{\sum}} F_{\ell}(y)
 \end{pmatrix}  \, dy \, = \, i \, k  \,  \mathcal{H}^{Inc}(x), \qquad x \in \Omega.
\end{equation}
It is clear from the previous equation that  the vector $\mathcal{F}(\cdot)$ can be reconstructed from the sum of its components, i.e. $\overset{K+1}{\underset{\ell=1}{\sum}} F_{\ell}(\cdot)$. In the sequel, we will derive and solve an integral equation that is satisfied by $\overset{K+1}{\underset{\ell=1}{\sum}} F_{\ell}(\cdot)$. Successively, by taking the inverse of $\mathbb{T}_{1}$ on both sides of $(\ref{Re10})$ and multiplying the obtained equation by the matrix $\left( I_{3}, \cdots, I_{3} \right) \in \mathbb{R}^{3(K+1) \times 3}$, we get 
\begin{equation}\label{T1MAS}
   \sum_{\ell=1}^{K+1} F_{\ell}(x) \, - \, \pm  \, \xi  \, \int_{\Omega} \, \left( I_{3}, \cdots, I_{3} \right) \cdot \mathbb{T}_{1}^{-1} \cdot \begin{pmatrix}
     \Upsilon_{k}(x,y) \cdot {\bf P_0} \cdot \overset{K+1}{\underset{\ell=1}{\sum}} F_{\ell}(y)  \\ 
     \vdots \\
     \Upsilon_{k}(x,y) \cdot {\bf P_0} \cdot \overset{K+1}{\underset{\ell=1}{\sum}} F_{\ell}(y)
 \end{pmatrix}  \, dy \, = \, i \, k  \,  \left( I_{3}, \cdots, I_{3} \right) \cdot \mathbb{T}_{1}^{-1} \cdot \mathcal{H}^{Inc}(x).
\end{equation}
For the purpose of writing compact formulas, we introduce the following notations,  
\begin{equation}\label{T1M}
    \left( I_{3}, \cdots, I_{3} \right) \cdot \mathbb{T}_{1}^{-1} := \left(\widetilde{\mathcal{A}}_{1}, \cdots, \widetilde{\mathcal{A}}_{K+1} \right) \quad \text{and} \quad \mathcal{A} := \sum_{\ell=1}^{K+1} \widetilde{\mathcal{A}}_{\ell}.
\end{equation}
Hence, using $(\ref{T1M})$ and $(\ref{Re34})$, the integral equation given by $(\ref{T1MAS})$ takes the following form,
\begin{equation}\label{ReducedIE}
   \sum_{\ell=1}^{K+1} F_{\ell}(x) \, - \, \pm  \, \xi  \,  \int_{\Omega} \mathcal{A} \cdot
     \Upsilon_{k}(x,y) \cdot {\bf P_0} \cdot \sum_{\ell=1}^{K+1} F_{\ell}(y) \, dy \, = \, i \, k  \, \mathcal{A} \cdot H^{Inc}(x),
\end{equation}
which is the integral equation satisfied by  $\overset{K+1}{\underset{\ell=1}{\sum}} F_{\ell}(\cdot)$. Equivalently, by taking the inverse of $\mathcal{A}^{-1}$ on the both sides of $(\ref{ReducedIE})$, we get 
\begin{equation}\label{Re14}
   \mathcal{A}^{-1} \cdot \sum_{\ell=1}^{K+1} F_{\ell}(x) \, - \, \pm  \, \xi  \, \int_{\Omega} \, 
     \Upsilon_{k}(x,y) \cdot {\bf P_0} \cdot \mathcal{A} \cdot \mathcal{A}^{-1} \cdot 
 \sum_{\ell=1}^{K+1} F_{\ell}(y) \, dy \, = \, i \, k  \,  H^{Inc}(x).
\end{equation}
To write short formulas, we set 
\begin{equation}\label{Re15}
    \textbf{F}(\cdot) \, := \,    \mathcal{A}^{-1} \cdot \sum_{\ell=1}^{K+1} F_{\ell}(\cdot). 
\end{equation}
Combining $(\ref{Re14})$ with $(\ref{Re15})$, we end up with the following Lippmann-Schwinger equation (L.S.E)
\begin{equation}\label{Re16}
    \textbf{F} - \, \pm  \, \xi  \, \left[ - \, \nabla {\bf M}^{k}_{\Omega}\left( {\bf P_0} \cdot \mathcal{A} \cdot \textbf{F} \right) \, + \, k^{2} \, {\bf N}^{k}_{\Omega}\left( {\bf P_0} \cdot \mathcal{A} \cdot \textbf{F} \right) \right] \, = \, i \, k \, H^{Inc},
\end{equation}
where $\nabla {\bf M}^{k}$ is the Magnetization operator with non-vanishing frequency defined by
\begin{equation*}
    \nabla {\bf M}^{k}\left( E \right)(x) \, := \, \nabla \int_{\Omega} \underset{y}{\nabla}\Phi_{k}(x,y) \cdot E(y) \, dy, \quad x \in \Omega, 
\end{equation*}
and ${\bf N}^{k}$ is the Newtonian operator with non-vanishing frequency given by 
\begin{equation*}
   {\bf N}^{k}\left( E \right)(x) \, := \, \int_{\Omega} \Phi_{k}(x,y)  E(y) \, dy, \quad x \in \Omega,
\end{equation*}
with $\Phi_{k}(\cdot,\cdot)$ being the kernel defined by $(\ref{fundsolheleq})$. 
\medskip  
\newline
Clearly, by discretizing the L.S.E given by the equation $(\ref{Re16})$, as done in \cite[Section 4]{cao2023all}, we get the following algebraic system 
\begin{eqnarray}\label{Eq1033}
\nonumber
    \begin{pmatrix}
        \mathcal{Q}_{1} \\
        \mathcal{Q}_{2} \\
        \vdots \\
        \mathcal{Q}_{[d^{-3}]}
    \end{pmatrix} &-& \pm \xi d^{3}  \begin{pmatrix}
        0 & \Upsilon_{k}(z_{1},z_{2}) \cdot {\bf T}^{\mathring\mu_{r}}_{2}  & \cdots &   \Upsilon_{k}(z_{1},z_{[d^{-3}]}) \cdot {\bf T}^{\mathring\mu_{r}}_{[d^{-3}]} \\
        \Upsilon_{k}(z_{2},z_{1}) \cdot {\bf T}^{\mathring\mu_{r}}_{1} & 0 & \cdots & \Upsilon_{k}(z_{2},z_{[d^{-3}]}) \cdot {\bf T}^{\mathring\mu_{r}}_{[d^{-3}]} \\
        \vdots & \vdots & \ddots & \vdots \\
        \Upsilon_{k}(z_{[d^{-3}]},z_{1}) \cdot {\bf T}^{\mathring\mu_{r}}_{1} & \Upsilon_{k}(z_{[d^{-3}]},z_{2}) \cdot {\bf T}^{\mathring\mu_{r}}_{2} & \cdots & 0 
    \end{pmatrix} \cdot \begin{pmatrix}
        \mathcal{Q}_{1} \\
        \mathcal{Q}_{2} \\
        \vdots \\
        \mathcal{Q}_{[d^{-3}]}
    \end{pmatrix} \\ \nonumber
    && \\
    &=& i \, k \,  \begin{pmatrix}
        \frac{1}{\left\vert \Omega_{1} \right\vert} \, \int_{\Omega_{1}} H^{Inc}(x) \, dx \\
        \frac{1}{\left\vert \Omega_{2} \right\vert} \, \int_{\Omega_{2}} H^{Inc}(x) \, dx \\
        \vdots \\
        \frac{1}{\left\vert \Omega_{[d^{-3}]} \right\vert} \, \int_{\Omega_{[d^{-3}]}} H^{Inc}(x) \, dx
    \end{pmatrix} \, + \begin{pmatrix}
        err_{1} \\
        err_{2} \\
        \vdots \\
        err_{[d^{-3}]}
    \end{pmatrix},
\end{eqnarray}
where the constant tensor ${\bf T}^{\mathring\mu_{r}}_{j}$ is defined by 
\begin{equation}\label{TensorTmu}
    {\bf T}^{\mathring\mu_{r}}_{j} \, := \, {\bf P_0} \cdot \mathcal{A} \cdot \left( I_{3} \, \pm  \, \xi \, \frac{1}{\left\vert \Omega_{j} \right\vert} \, \int_{\Omega_{j}} \nabla {\bf M}_{\Omega_{j}}\left( I_{3} \right)(x) \, dx \cdot {\bf P_0} \cdot \mathcal{A} \right)^{-1}, \quad 1 \leq j \leq [d^{-3}], 
\end{equation}
the unknown vector is such that 
\begin{equation*}\label{DefQm}
    \mathcal{Q}_{m} \, := \, \left( I_{3} \, \pm  \, \xi \, \dfrac{1}{\left\vert \Omega_{m} \right\vert} \int_{\Omega_{m}} \nabla {\bf M}_{\Omega_{m}}\left( I_{3} \right)(x) \, dx \cdot {\bf P_0} \cdot \mathcal{A} \right) \cdot \mathcal{A}^{-1} \cdot \frac{1}{\left\vert \Omega_{m} \right\vert} \int_{\Omega_{m}} \sum_{\ell=1}^{K+1} F_{\ell}(x) \, dx, \quad 1 \leq m \leq [d^{-3}],
\end{equation*}
and the error term is given by
\begin{eqnarray*}
    err_{m} \, &:=& \, \pm \, \xi \, \sum_{j=1 \atop j \neq m}^{[d^{-3}]} \frac{1}{\left\vert \Omega_{m} \right\vert} \int_{\Omega_{m}} \int_{\Omega_{j}} \left( \Upsilon_{k}(x,y) \, - \, \Upsilon_{k}(z_{m},z_{j}) \right) \cdot {\bf P_0} \cdot  \sum_{\ell=1}^{K+1} F_{\ell}(y) \, dy \, dx \\
    &\pm&  \xi \,  \frac{1}{\left\vert \Omega_{m} \right\vert} \int_{\Omega_{m}} \int_{\Omega_{m}} \Upsilon_{k}(x,y) \cdot {\bf P_0} \cdot \left( \sum_{\ell=1}^{K+1} F_{\ell}(y) - \frac{1}{\left\vert \Omega_{m} \right\vert} \int_{\Omega_{m}} \sum_{\ell=1}^{K+1} F_{\ell}(z) \, dz \right) \, dy \, dx \\
    &\pm&  \xi \,  \frac{1}{\left\vert \Omega_{m} \right\vert} \int_{\Omega_{m}} \int_{\Omega_{m}} \left( \Upsilon_{k} - \Upsilon_{0} \right)(x,y)  \, dy \, dx \cdot {\bf P_0} \cdot \dfrac{1}{\left\vert \Omega_{m} \right\vert} \int_{\Omega_{m}} \sum_{\ell=1}^{K+1} F_{\ell}(y) dy \\
    &\pm&  \xi \,  \frac{1}{\left\vert \Omega_{m} \right\vert} \int_{\Omega_{m}} \int_{\Omega \setminus \overset{[d^{-3}]}{\underset{j=1}\cup} \Omega_{j}}  \Upsilon_{k}(x,y) \cdot {\bf P_0} \cdot  \sum_{\ell=1}^{K+1} F_{\ell}(y)  \, dy \, dx.
\end{eqnarray*} 
Observe that the algebraic system given by $(\ref{Eq1033})$, has a similar expression as the one given in \cite[Section 6, Equation (6.1)]{CGS}. Hence, by using similar arguments as those used in this reference, we derive that under the condition 
\begin{equation*}
    \left\vert \xi \right\vert \, \underset{1 \leq j \leq [d^{-3}]}{\max} \left\vert {\bf T}^{\mathring\mu_{r}}_{j} \right\vert \, < \, 1,
\end{equation*}
the associated algebraic system, to the L.S.E given by $(\ref{Re16})$, is invertible. Additionally, in \textbf{Section \ref{PositiveOperator}}, we prove the invertibility of the L.S.E given by $(\ref{Re16})$. Hence, by its construction, the equation derived from the general integral equation (G.I.E), given by $(\ref{Re10})$ is also invertible. Consequently, the general integral equation (G.I.E) given by  $(\ref{G.I.E})$ is also invertible. 
\smallskip 
\begin{remark}
The Tensor ${\bf T}^{\mathring\mu_{r}}$ defined in $(\ref{TensorTmu})$ for the case of a Dimer can be expressed when the tensor ${\bf P_0}$, given by $(\ref{defP0})$, is defined over the unit ball, and this is due to $(\ref{ExpressionA})$, as
\begin{equation*}
{\bf T}^{\mathring\mu_{r}} \, = \, \alpha^{\star} \, I_{3} \, - \, \beta^{\star} \, \left( z_{0_{1}} \, - \, z_{0_{2}} \right) \otimes \left( z_{0_{1}} \, - \, z_{0_{2}} \right), 
\end{equation*}
where 
\begin{equation*}
    \alpha^{\star} = \dfrac{24 \, \left( \pi^{3} \mp 4 \xi \right)^{2}}{\pi^{5} \, \left[\pi \, \left(\pi^{3} \mp 8 \xi \right) \pm 3 \xi \right] \pm 8 \xi \left( \pi^{3} \mp 4 \xi \right)^{2}} \quad \text{and} \quad 
    \beta^{\star} = \dfrac{\pm \, 216 \, \xi \, \left( \pi^{3} \mp 4 \xi \right)^{2} \, \left[ \pi \, \left(\pi^{3} \mp 8 \, \xi \right) \pm 3 \xi \right]}{\left[ \left[\pi \, \left(\pi^{3} \mp 8 \xi \right) \pm 3 \xi \right] \pm 8 \xi \left( \pi^{3} \mp 4 \xi \right)^{2} \right] \, d^{2} \, \varkappa},
\end{equation*}
with 
\begin{equation*}
    \varkappa \, = \, \pi^{13} \, \mp \, 8 \, \xi \, \pi^{10} \, \mp \, 3 \, \xi \, \pi^{9} \, - \, 64 \, \xi^{2} \, \pi^{7} \, + \, 48 \, \xi^{2} \, \pi^{6} \, - \, 128 \, \xi^{2} \, \pi^{5} \, \pm \, 748 \, \xi^{3} \, \pi^{3} \, - \, 1024 \, \xi^{4} \, \pi \, + \, 384 \, \xi^{4}.
\end{equation*}
\end{remark}
\smallskip 
\phantom{}
\newline
The following step involves discretizing the general integral equation (G.I.E) given by $(\ref{G.I.E})$, and estimating the difference between the (G.I.E) and the general algebraic system (G.A.S) given by $(\ref{al-dis1})$. Therefore, recalling $(\ref{G.I.E})$, we take the average with respect to the variable $x$ over $\Omega_{m}$ to obtain,
\begin{equation*}
    \mathbb{T}_1 \cdot \frac{1}{\left\vert \Omega_{m} \right\vert} \int_{\Omega_{m}} \mathcal{F}(x) dx  \, - \, \pm  \, \xi \, \frac{1}{\left\vert \Omega_{m} \right\vert} \int_{\Omega_{m}} \int_{\Omega} \, \C\left({\bf P_0}\mathbb{I}\right)_{k}(x,y) \cdot \mathcal{F}(y) \, dy \, dx  
       \,  = \, i \, k  \, \frac{1}{\left\vert \Omega_{m} \right\vert} \int_{\Omega_{m}} \mathcal{H}^{Inc}(x) \, dx.
\end{equation*}
Using the fact that 
\begin{equation*}
    \Omega \, = \, \Omega_{m} \, \cup \, \left( \overset{[d^{-3}]}{\underset{j=1 \atop j \neq m}\cup} \Omega_{j} \right) \, \cup \, \left( \Omega \setminus \overset{[d^{-3}]}{\underset{j=1}\cup} \Omega_{j} \right),  
\end{equation*}
we obtain 
\begin{eqnarray*}
    \mathbb{T}_1 \cdot \frac{1}{\left\vert \Omega_{m} \right\vert} \int_{\Omega_{m}} \mathcal{F}(x) dx  \, 
       &-& \, \pm  \, \xi \, \sum_{j=1 \atop j \neq m}^{[d^{-3}]} \, \frac{1}{\left\vert \Omega_{m} \right\vert} \, \int_{\Omega_{m}} \int_{\Omega_{j}} \, \C\left({\bf P_0}\mathbb{I}\right)_{k}(x,y) \cdot \mathcal{F}(y) \, dy \, dx  
       \\
       &=& \, i \, k  \, \frac{1}{\left\vert \Omega_{m} \right\vert} \int_{\Omega_{m}} \mathcal{H}^{Inc}(x) \, dx 
        \, \pm  \, \xi \, \frac{1}{\left\vert \Omega_{m} \right\vert} \int_{\Omega_{m}} \int_{\Omega_{m}} \, \C\left({\bf P_0}\mathbb{I}\right)_{k}(x,y) \cdot \mathcal{F}(y) \, dy \, dx  
       \\
      &\pm&    \xi \, \frac{1}{\left\vert \Omega_{m} \right\vert} \int_{\Omega_{m}} \int_{\Omega \setminus \overset{[d^{-3}]}{\underset{j=1}\cup} \Omega_{j}} \, \C\left({\bf P_0}\mathbb{I}\right)_{k}(x,y) \cdot \mathcal{F}(y) \, dy \, dx.
\end{eqnarray*}
Knowing that $\left\{ z_{m_{p}} \right\}_{p=1}^{K+1} \subset \Omega_{m}$ and $\left\{ z_{j_{q}} \right\}_{q=1}^{K+1} \subset \Omega_{j}$, and by using the Taylor expansion near $\left\{ z_{m_{p}}, z_{j_{q}} \right\}_{q, p=1}^{K+1}$, we end up with the constant tensor $\C_{mj}\left({\bf P_0}\mathbb{I}\right)_{k}$, see $(\ref{TensorCmj})$ for its definition, as a dominant kernel for the second term on the L.H.S. More precisely, by adding and subtracting  $\C_{mj}\left({\bf P_0}\mathbb{I}\right)_{k}$ and using the fact that $\left\vert \Omega_{j} \right\vert=d^{3} $, we obtain
\begin{eqnarray*}
    \mathbb{T}_1 \cdot \frac{1}{\left\vert \Omega_{m} \right\vert} \int_{\Omega_{m}} \mathcal{F}(x) dx  \, 
       &-& \, \pm  \, \xi \, d^{3} \, \sum_{j=1 \atop j \neq m}^{[d^{-3}]}  \,   \C_{mj}\left({\bf P_0}\mathbb{I}\right)_{k} \cdot \, \frac{1}{\left\vert \Omega_{j} \right\vert} \int_{\Omega_{j}} \, \mathcal{F}(y) \, dy 
       \\
       &=& \, i \, k  \, \frac{1}{\left\vert \Omega_{m} \right\vert} \int_{\Omega_{m}} \mathcal{H}^{Inc}(x) \, dx 
       \, \pm  \, \xi \, \frac{1}{\left\vert \Omega_{m} \right\vert} \int_{\Omega_{m}} \int_{\Omega_{m}} \, \C\left({\bf P_0}\mathbb{I}\right)_{k}(x,y) \cdot \mathcal{F}(y) \, dy \, dx  
       \\
      & \pm & \xi \, \frac{1}{\left\vert \Omega_{m} \right\vert} \int_{\Omega_{m}} \int_{\Omega \setminus \overset{[d^{-3}]}{\underset{j=1}\cup} \Omega_{j}} \, \C\left({\bf P_0}\mathbb{I}\right)_{k}(x,y) \cdot \mathcal{F}(y) \, dy \, dx \\
      & \pm & \xi \, \sum_{j=1 \atop j \neq m}^{[d^{-3}]} \, \frac{1}{\left\vert \Omega_{m} \right\vert} \, \int_{\Omega_{m}} \int_{\Omega_{j}} \, \left( \C\left({\bf P_0}\mathbb{I}\right)_{k}(x,y) - \C_{mj}\left({\bf P_0}\mathbb{I}\right)_{k} \right) \cdot \mathcal{F}(y) \, dy \, dx.  
\end{eqnarray*}
Denoting  
\begin{equation}\label{FM2004}
    \widetilde{\mathbb{U}}_m \, := \, \frac{1}{\left\vert \Omega_{m} \right\vert} \int_{\Omega_{m}} \mathcal{F}(x) \, dx, \quad 1 \leq m \leq [d^{-3}], 
\end{equation}
we reduce the previous algebraic system to
\begin{eqnarray}\label{Re17}
\nonumber
    \mathbb{T}_1 \cdot \widetilde{\mathbb{U}}_m  \, 
       &-& \, \pm  \, \xi \, d^{3} \, \sum_{j=1 \atop j \neq m}^{[d^{-3}]}  \,   \C_{mj}\left({\bf P_0}\mathbb{I}\right)_{k} \cdot \widetilde{\mathbb{U}}_j 
       = \, i \, k  \, \frac{1}{\left\vert \Omega_{m} \right\vert} \int_{\Omega_{m}} \mathcal{H}^{Inc}(x) \, dx \\ \nonumber
        & \pm &  \, \xi \, \frac{1}{\left\vert \Omega_{m} \right\vert} \int_{\Omega_{m}} \int_{\Omega_{m}} \, \C\left({\bf P_0}\mathbb{I}\right)_{k}(x,y) \cdot \mathcal{F}(y) \, dy \, dx  
       \\ \nonumber
      & \pm &  \xi \, \frac{1}{\left\vert \Omega_{m} \right\vert} \int_{\Omega_{m}} \int_{\Omega \setminus \overset{[d^{-3}]}{\underset{j=1}\cup} \Omega_{j}} \, \C\left({\bf P_0}\mathbb{I}\right)_{k}(x,y) \cdot \mathcal{F}(y) \, dy \, dx \\ 
      & \pm &  \xi \, \sum_{j=1 \atop j \neq m}^{[d^{-3}]} \, \frac{1}{\left\vert \Omega_{m} \right\vert} \, \int_{\Omega_{m}} \int_{\Omega_{j}} \, \left( \C\left({\bf P_0}\mathbb{I}\right)_{k}(x,y) - \C_{mj}\left({\bf P_0}\mathbb{I}\right) \right)  \cdot  \mathcal{F}(y) \, dy \, dx .
\end{eqnarray}
Regarding the first term on the R.H.S, with help of Taylor expansion, we have 
\begin{equation}\label{Delta1m}
    \Delta_{1,m} \, := \, \frac{1}{\left\vert \Omega_{m} \right\vert} \int_{\Omega_{m}} \mathcal{H}^{Inc}(x) \, dx  \overset{(\ref{Re34})}{=}  \,     \frac{1}{\left\vert \Omega_{m} \right\vert} \int_{\Omega_{m}} \begin{pmatrix}
        H^{Inc}(x) \\
        \vdots \\
        H^{Inc}(x)
    \end{pmatrix}  \, dx 
     \overset{(\ref{notation-tensor})}{=}  \, \hh^{Inc}_m \, + \, \Gamma_{1,m},
\end{equation}
where $\Gamma_{1,m}$ is given by 
\begin{equation}\label{DefGamma1m}
       \Gamma_{1,m} :=   \frac{1}{\left\vert \Omega_{m} \right\vert} \int_{\Omega_{m}} \begin{pmatrix}
        \int_{0}^{1} \nabla H^{Inc}(z_{m_{1}}+t(x-z_{m_{1}})) \cdot (x - z_{m_{1}}) \, dt \\
        \vdots \\
        \int_{0}^{1} \nabla H^{Inc}(z_{m_{1}}+t(x-z_{m_{1}})) \cdot (x - z_{m_{1}}) \, dt
    \end{pmatrix}  \, dx. 
\end{equation}
Next, we set the second term on the R.H.S of $(\ref{Re17})$ to be 
\begin{eqnarray}\label{Delta2m}
    \Delta_{2,m} \, &:=& \,  \pm  \, \xi \, \frac{1}{\left\vert \Omega_{m} \right\vert} \int_{\Omega_{m}} \int_{\Omega_{m}} \, \C\left({\bf P_0}\mathbb{I}\right)_{k}(x,y) \cdot \mathcal{F}(y) \, dy \, dx \\ \nonumber
     &=& \,  \pm  \, \xi \, \frac{1}{\left\vert \Omega_{m} \right\vert} \int_{\Omega_{m}} \int_{\Omega_{m}} \, \C\left({\bf P_0}\mathbb{I}\right)_{0}(x,y) \, dy \, dx \cdot \frac{1}{\left\vert \Omega_{m} \right\vert} \int_{\Omega_{m}} \mathcal{F}(z) \, dz \, + \, \Gamma_{2,m},
\end{eqnarray}
where 
\begin{eqnarray}\label{DefGamma2m}
\nonumber
      \Gamma_{2,m} &:=&
    \pm \xi \, \frac{1}{\left\vert \Omega_{m} \right\vert} \int_{\Omega_{m}} \int_{\Omega_{m}} \, \C\left({\bf P_0}\mathbb{I}\right)_{k}(x,y) \cdot \left[ \mathcal{F}(y) - \frac{1}{\left\vert \Omega_{m} \right\vert} \int_{\Omega_{m}} \mathcal{F}(z) \, dz \right] \, dy \, dx \\
    &\pm& \xi \, \frac{1}{\left\vert \Omega_{m} \right\vert} \int_{\Omega_{m}} \int_{\Omega_{m}} \left[  \C\left({\bf P_0}\mathbb{I}\right)_{k}(x,y) - \C\left({\bf P_0}\mathbb{I}\right)_{0}(x,y) \right] \, dy \, dx \cdot \frac{1}{\left\vert \Omega_{m} \right\vert} \int_{\Omega_{m}} \mathcal{F}(z) \, dz.
\end{eqnarray}
Thanks to $(\ref{FM2004})$ and $(\ref{Re344})$, the expression of $\Delta_{2,m}$ becomes\footnote{We have used the fact that 
\begin{equation*}
    \int_{\Omega_{m}} \Upsilon_{0}(x,y) \cdot {\bf P_0} \, dy \, := \, \left( - \underset{x}{\nabla} \int_{\Omega_{m}} \underset{y}{\nabla} \Phi_{0}(x,y)  \right) \cdot {\bf P_0} = - \, \nabla {\bf M}_{\Omega_{m}}\left( I_{3} \right)(x)\cdot {\bf P_0}.
\end{equation*}}, 
\begin{eqnarray}\label{Eq1010}
\nonumber
     \Delta_{2,m}  &=&  - \,  \pm  \, \xi \, \frac{1}{\left\vert \Omega_{m} \right\vert} \int_{\Omega_{m}} 
     \begin{pmatrix}
         \nabla {\bf M}_{\Omega_{m}}\left( I_{3} \right)(x) \cdot {\bf P_0} & \cdots &          \nabla {\bf M}_{\Omega_{m}}\left( I_{3} \right)(x) \cdot {\bf P_0} \\
         \vdots & \ddots & \vdots \\
        \nabla {\bf M}_{\Omega_{m}}\left( I_{3} \right)(x) \cdot {\bf P_0} & \cdots &          \nabla {\bf M}_{\Omega_{m}}\left( I_{3} \right)(x) \cdot {\bf P_0}
     \end{pmatrix}  \, dx \cdot \mathbb{U}_{m} \, + \, \Gamma_{2,m} \\ \nonumber
     && \\
     &=&  - \,  \pm  \, \xi \, \frac{1}{\left\vert \Omega_{0} \right\vert} \int_{\Omega_{0}} 
     \begin{pmatrix}
         \nabla {\bf M}_{\Omega_{0}}\left( I_{3} \right)(x) \cdot {\bf P_0} & \cdots &          \nabla {\bf M}_{\Omega_{0}}\left( I_{3} \right)(x) \cdot {\bf P_0} \\
         \vdots & \ddots & \vdots \\
        \nabla {\bf M}_{\Omega_{0}}\left( I_{3} \right)(x) \cdot {\bf P_0} & \cdots &          \nabla {\bf M}_{\Omega_{0}}\left( I_{3} \right)(x) \cdot {\bf P_0}
     \end{pmatrix}  \, dx \cdot \mathbb{U}_{m} \, + \, \Gamma_{2,m},
\end{eqnarray}
where the translation argument from $\Omega_{m}$ to the reference set $\Omega_{0}$ provides with the justification for the final equality, see $(\ref{al-dis-matrix})$ and $(\ref{def-T0-matrix})$. Remark, from the formula above, that the dominant coefficient 
\begin{equation}\label{ShiftCoeff}
  - \,  \pm  \, \xi \, \frac{1}{\left\vert \Omega_{0} \right\vert} \int_{\Omega_{0}} 
     \begin{pmatrix}
         \nabla {\bf M}_{\Omega_{0}}\left( I_{3} \right)(x) \cdot {\bf P_0} & \cdots &          \nabla {\bf M}_{\Omega_{0}}\left( I_{3} \right)(x) \cdot {\bf P_0} \\
         \vdots & \ddots & \vdots \\
        \nabla {\bf M}_{\Omega_{0}}\left( I_{3} \right)(x) \cdot {\bf P_0} & \cdots &          \nabla {\bf M}_{\Omega_{0}}\left( I_{3} \right)(x) \cdot {\bf P_0}
     \end{pmatrix}  \, dx,   
\end{equation}
is the same coefficient used to shift $\mathbb{T}_0$ into $\mathbb{T}_1$, see $(\ref{DefT1})$. While we were inspired by the general algebraic system given by $(\ref{al-dis1})$, which was introduced using the tensor $\mathbb{T}_0$, we decided to use a different tensor  $\mathbb{T}_1$ to write its corresponding general integral equation, see $(\ref{G.I.E})$, which is justified by the above discussion.
\medskip
\newline
Now, by gathering $(\ref{Delta1m}), (\ref{Delta2m})$ and $(\ref{Eq1010})$ with the algebraic system $(\ref{Re17})$ and using the expression of the tensor $\mathbb{T}_1$, given by $(\ref{DefT1})$, we end up with the following algebraic system
\begin{equation}\label{Eq1037}
    \mathbb{T}_{0} \cdot \widetilde{\mathbb{U}}_m  \, 
       - \, \pm  \, \xi \, d^{3} \, \sum_{j=1 \atop j \neq m}^{[d^{-3}]}  \,   \C_{mj}\left({\bf P_0}\mathbb{I}\right)_{k} \cdot \widetilde{\mathbb{U}}_j \, = \, i \, k \, \hh^{Inc}_m \, + \, \Gamma_{m}, 
\end{equation}
where the error term $\Gamma_{m}$ is defined by 
\begin{eqnarray}\label{ExpressionGamma}
\nonumber
     \Gamma_{m} &:=&  i \, k \, \Gamma_{1,m} \, + \, \Gamma_{2,m} \, \pm \, \xi \, \frac{1}{\left\vert \Omega_{m} \right\vert} \int_{\Omega_{m}} \int_{\Omega \setminus \overset{[d^{-3}]}{\underset{j=1}\cup} \Omega_{j}} \, \C\left({\bf P_0}\mathbb{I}\right)_{k}(x,y) \cdot \mathcal{F}(y) \, dy \, dx \\ 
      &\pm&  \xi \, \sum_{j=1 \atop j \neq m}^{[d^{-3}]} \, \frac{1}{\left\vert \Omega_{m} \right\vert} \, \int_{\Omega_{m}} \int_{\Omega_{j}} \, \left( \C\left({\bf P_0}\mathbb{I}\right)_{k}(x,y) - \C_{mj}\left({\bf P_0}\mathbb{I}\right)_{k} \right) \cdot \mathcal{F}(y) \, dy \, dx,
\end{eqnarray}
with $\Gamma_{1,m}$ is defined by $(\ref{DefGamma1m})$ and $\Gamma_{2,m}$ is defined by $(\ref{DefGamma2m})$. In the sequel, we need to estimate the $\left\Vert \cdot \right\Vert_{\ell^{2}}$-norm of the error term given by $\Gamma_{m}$. To achieve this, we set and estimate the $\left\Vert \cdot \right\Vert_{\ell^{2}}$-norm of each term appearing on the R.H.S of $(\ref{ExpressionGamma})$, respectively. 
\begin{enumerate}
    \item[]
    \item Estimation of 
    \begin{eqnarray}\label{Eq0506}
    \nonumber
        \Gamma_{1,m} &\overset{(\ref{DefGamma1m})}{:=}& i \, k \, \,      \frac{1}{\left\vert \Omega_{m} \right\vert} \int_{\Omega_{m}} \begin{pmatrix}
        \int_{0}^{1} \nabla H^{Inc}(z_{m_{1}}+t(x-z_{m_{1}})) \cdot (x - z_{m_{1}}) \, dt \\
        \vdots \\
        \int_{0}^{1} \nabla H^{Inc}(z_{m_{1}}+t(x-z_{m_{1}})) \cdot (x - z_{m_{1}}) \, dt
    \end{pmatrix}  \, dx \\ \nonumber
    \sum_{m=1}^{[d^{-3}]} \left\vert \Gamma_{1,m} \right\vert^{2} & = & k^{2} \, (K+1) \,\sum_{m=1}^{[d^{-3}]} \frac{1}{\left\vert \Omega_{m} \right\vert^{2}} \,  
    \left\vert \int_{\Omega_{m}} \int_{0}^{1} \nabla H^{Inc}(z_{m_{1}} + t (x - z_{m_{1}})) \cdot (x - z_{m_{1}}) \, dt \, dx \right\vert^{2}  \\ \nonumber
    & \overset{(\ref{EincHinc})}{=} & k^{2} \, (K+1) \,\sum_{m=1}^{[d^{-3}]} \frac{1}{\left\vert \Omega_{m} \right\vert^{2}} \,  
    \left\vert \int_{\Omega_{m}} \left( \theta \times \mathrm{p} \right) \, e^{i \, k \, \theta \cdot z_{m_{1}}} \, \left(  e^{i \, k \, \theta \cdot (x - z_{m_{1}})}   \, - \, 1 \right) \, dx \right\vert^{2}  \\ \nonumber
    & \lesssim & k^{2} \,  \,\sum_{m=1}^{[d^{-3}]} \frac{1}{\left\vert \Omega_{m} \right\vert^{2}} \,  
    \left\vert \int_{\Omega_{m}} \left(  e^{i \, k \, \theta \cdot (x - z_{m_{1}})}   \, - \, 1 \right) \, dx \right\vert^{2}  \\ 
    & \lesssim & k^{k} \,  \,\sum_{m=1}^{[d^{-3}]} \frac{1}{\left\vert \Omega_{m} \right\vert^{2}} \,  
    \left\vert \int_{\Omega_{m}}  \theta \cdot (x - z_{m_{1}})    \, dx \right\vert^{2} 
    =  \mathcal{O}\left(k^{4} \, d^{-1} \right).
    \end{eqnarray}
    \item[]
    \item Estimation of
    \item[] 
\begin{eqnarray*}\label{EstG2m}
     \Gamma_{2,m} &\overset{(\ref{DefGamma2m})}{:=}& \pm \, \xi \, \frac{1}{\left\vert \Omega_{m} \right\vert} \int_{\Omega_{m}} \int_{\Omega_{m}} \, \left[  \C\left({\bf P_0}\mathbb{I}\right)_{k}(x,y) \, - \, \C\left({\bf P_0}\mathbb{I}\right)_{0}(x,y) \right] \, dy \, dx \cdot \frac{1}{\left\vert \Omega_{m} \right\vert} \int_{\Omega_{m}} \mathcal{F}(z) \, dz  \\
    &+& \pm \, \xi \, \frac{1}{\left\vert \Omega_{m} \right\vert} \int_{\Omega_{m}} \int_{\Omega_{m}} \, \C\left({\bf P_0}\mathbb{I}\right)_{k}(x,y) \cdot \left[  \mathcal{F}(y) \, - \, \frac{1}{\left\vert \Omega_{m} \right\vert} \int_{\Omega_{m}} \mathcal{F}(z) \, dz \right] \, dy \, dx.
\end{eqnarray*}
We set and estimate the two terms appearing on the R.H.S, respectively. 
\begin{enumerate}
    \item[] 
    \item Estimation of 
    \begin{eqnarray*}
    \Gamma_{2,m,1} &:=& \pm \, \xi \, \frac{1}{\left\vert \Omega_{m} \right\vert} \int_{\Omega_{m}} \int_{\Omega_{m}} \, \left[  \C\left({\bf P_0}\mathbb{I}\right)_{k}(x,y) \, - \, \C\left({\bf P_0}\mathbb{I}\right)_{0}(x,y) \right] \, dy \, dx \cdot \frac{1}{\left\vert \Omega_{m} \right\vert} \int_{\Omega_{m}} \mathcal{F}(z) \, dz  \\
    & \overset{(\ref{Re344})}{=} & \pm \, \xi \, \frac{1}{\left\vert \Omega_{m} \right\vert} \int_{\Omega_{m}} \int_{\Omega_{m}} \begin{pmatrix}
        \left( \Upsilon_{k} - \Upsilon_{0} \right) \cdot {\bf P_0} & \cdots & \left( \Upsilon_{k} - \Upsilon_{0} \right) \cdot {\bf P_0} \\
        \vdots & \ddots & \vdots \\
        \left( \Upsilon_{k} - \Upsilon_{0} \right) \cdot {\bf P_0} & \cdots & \left( \Upsilon_{k} - \Upsilon_{0} \right) \cdot {\bf P_0}
    \end{pmatrix}(x,y) \, dy \, dx \cdot \frac{1}{\left\vert \Omega_{m} \right\vert} \int_{\Omega_{m}} \mathcal{F}(z) \, dz  \\
    \left\vert \Gamma_{2,m,1} \right\vert & \lesssim & \left\Vert \mathcal{F} \right\Vert_{\mathbb{L}^{\infty}(\Omega)} 
    \, \left\vert {\bf P_0} \right\vert \, \frac{1}{\left\vert \Omega_{m} \right\vert} \int_{\Omega_{m}} \int_{\Omega_{m}} \, \left\vert \left( \Upsilon_{k} - \Upsilon_{0} \right)(x,y) \right\vert \, dy \, dx  \\
    & \lesssim & k^{2} \, \left\Vert \mathcal{F} \right\Vert_{\mathbb{L}^{\infty}(\Omega)} 
    \, \left\vert {\bf P_0} \right\vert \, \frac{1}{\left\vert \Omega_{m} \right\vert} \int_{\Omega_{m}} \int_{\Omega_{m}} \frac{1}{\left\vert x - y \right\vert} \, dy \, dx,
    \end{eqnarray*}
    where the last estimate is justifiable by referring to \cite[Inequality (2.4.22)]{AM-1} and \eqref{LR}.
     Hence, using Taylor expansion with respect to the variable $x$ and keeping the dominant term of the resulting expression, we get the following approximation
    \begin{equation*}
    \left\vert \Gamma_{2,m,1} \right\vert         \lesssim  k^{2} \, \left\Vert \mathcal{F} \right\Vert_{\mathbb{L}^{\infty}(\Omega)} 
    \, \left\vert {\bf P_0} \right\vert \,  \int_{\Omega_{m}} \frac{1}{\left\vert z_{m} - y \right\vert} \, dy 
     =  \mathcal{O}\left( k^{2} \, \left\Vert \mathcal{F} \right\Vert_{\mathbb{L}^{\infty}(\Omega)} 
    \, \left\vert {\bf P_0} \right\vert \,  d^{2} \right) 
    \end{equation*}
    and
    \begin{equation}\label{Gamma2m1}
        \sum_{m=1}^{[d^{-3}]} \left\vert \Gamma_{2,m,1} \right\vert^{2} = \mathcal{O}\left( k^{4} \, \left\vert {\bf P_0} \right\vert^{2} \left\Vert \mathcal{F} \right\Vert^{2}_{\mathbb{L}^{\infty}(\Omega)} \, d \right).
    \end{equation}
    \item[]
    \item Estimation of
    \begin{eqnarray*}
        \Gamma_{2,m,2} &:=& \pm \, \xi \, \frac{1}{\left\vert \Omega_{m} \right\vert} \int_{\Omega_{m}} \int_{\Omega_{m}} \, \C\left({\bf P_0}\mathbb{I}\right)_{k}(x,y) \cdot \left[  \mathcal{F}(y) \, - \, \frac{1}{\left\vert \Omega_{m} \right\vert} \int_{\Omega_{m}} \mathcal{F}(z) \, dz \right] \, dy \, dx \\
        & \overset{(\ref{Re344})}{ \underset{(\ref{Re34})}{=}} & \pm \, \xi \, \frac{1}{\left\vert \Omega_{m} \right\vert} \int_{\Omega_{m}} \int_{\Omega_{m}} \begin{pmatrix}
           \Upsilon_{k}(x,y) \cdot {\bf P_0} \cdot \overset{K+1}{\underset{\ell=1}\sum} \left( F_{\ell}(y) - \frac{1}{\left\vert \Omega_{m} \right\vert} \int_{\Omega_{m}} F_{\ell}(z) \, dz \right) \\ 
           \vdots \\
          \Upsilon_{k}(x,y) \cdot {\bf P_0} \cdot \overset{K+1}{\underset{\ell=1}\sum} \left( F_{\ell}(y) - \frac{1}{\left\vert \Omega_{m} \right\vert} \int_{\Omega_{m}} F_{\ell}(z) \, dz \right)
        \end{pmatrix} \, dy \, dx \\
        & & \\
        & = & \pm \, \xi \, \frac{1}{\left\vert \Omega_{m} \right\vert} \int_{\Omega_{m}}  \begin{pmatrix}
            {{ \left( - \, \nabla {\bf M}^{k}_{\Omega_{m}}+k^2 \bf{N}_{\Omega_m}^k \right)}}\left( {\bf P_0} \cdot \overset{K+1}{\underset{\ell=1}\sum} \left( F_{\ell}(\cdot) - \frac{1}{\left\vert \Omega_{m} \right\vert} \int_{\Omega_{m}} F_{\ell}(z) \, dz \right) \right)(x) \\ 
           \vdots \\
        {{\left(- \, \nabla {\bf M}^{k}_{\Omega_{m}}+k^2 \bf{N}_{\Omega_m}^k \right)}} \left( {\bf P_0} \cdot \overset{K+1}{\underset{\ell=1}\sum} \left( F_{\ell}(\cdot) - \frac{1}{\left\vert \Omega_{m} \right\vert} \int_{\Omega_{m}} F_{\ell}(z) \, dz \right) \right)(x)
        \end{pmatrix} \, dx \\
        \left\vert \Gamma_{2,m,2} \right\vert & \lesssim & \frac{1}{\left\vert \Omega_{m} \right\vert} \int_{\Omega_{m}} \left\vert
          {{\left(- \, \nabla {\bf M}^{k}_{\Omega_{m}}+k^2 \bf{N}_{\Omega_m}^k \right)}} \left( {\bf P_0} \cdot \sum_{\ell = 1}^{K+1} \left( F_{\ell}(\cdot) - \frac{1}{\left\vert \Omega_{m} \right\vert} \int_{\Omega_{m}} F_{\ell}(z) \, dz \right) \right)(x)
        \right\vert \, dx \\
        & \lesssim & \frac{1}{\sqrt{\left\vert \Omega_{m} \right\vert}} \,  \left\Vert
         {{\left(- \, \nabla {\bf M}^{k}_{\Omega_{m}}+k^2 \bf{N}_{\Omega_m}^k \right)}} \left( {\bf P_0} \cdot \sum_{\ell = 1}^{K+1} \left( F_{\ell}(\cdot) - \frac{1}{\left\vert \Omega_{m} \right\vert} \int_{\Omega_{m}} F_{\ell}(z) \, dz \right) \right)
        \right\Vert_{\mathbb{L}^{2}(\Omega_{m})}  \\
        & \lesssim & \frac{1}{\sqrt{\left\vert \Omega_{m} \right\vert}} \,  \left\Vert
          {{\left(- \, \nabla {\bf M}^{k}_{\Omega_{m}}+k^2 \bf{N}_{\Omega_m}^k \right)}} \right\Vert_{\mathcal{L}\left(\mathbb{L}^{2}(\Omega_{m});\mathbb{L}^{2}(\Omega_{m})\right)} \left\vert {\bf P_0} \right\vert    \sum_{\ell = 1}^{K+1} \left\Vert  F_{\ell}(\cdot) - \frac{1}{\left\vert \Omega_{m} \right\vert} \int_{\Omega_{m}} F_{\ell}(z) \, dz  
        \right\Vert_{\mathbb{L}^{2}(\Omega_{m})}.  \end{eqnarray*}
        Using the fact that $\left\Vert
          {{\left(- \, \nabla {\bf M}^{k}_{\Omega_{m}}+k^2 \bf{N}_{\Omega_m}^k \right)}} \right\Vert_{\mathcal{L}\left(\mathbb{L}^{2}(\Omega_{m});\mathbb{L}^{2}(\Omega_{m})\right)} \, = \, \mathcal{O}\left( 1 \right)$, we reduce the previous inequality to 
        \begin{eqnarray*}
        \left\vert \Gamma_{2,m,2} \right\vert  & \lesssim & \frac{1}{\sqrt{\left\vert \Omega_{m} \right\vert}} \, \left\vert {\bf P_0} \right\vert \, \sum_{\ell=1}^{K+1} \, \left\Vert  F_{\ell}(\cdot) - \frac{1}{\left\vert \Omega_{m} \right\vert} \int_{\Omega_{m}} F_{\ell}(z) \, dz 
        \right\Vert_{\mathbb{L}^{2}(\Omega_{m})}
        \\
        & \lesssim & \frac{1}{\sqrt{\left\vert \Omega_{m} \right\vert}} \, \left\vert {\bf P_0} \right\vert \, \sqrt{K+1} \, \left( \sum_{\ell=1}^{K+1} \, \left\Vert  F_{\ell}(\cdot) - \frac{1}{\left\vert \Omega_{m} \right\vert} \int_{\Omega_{m}} F_{\ell}(z) \, dz 
        \right\Vert^{2}_{\mathbb{L}^{2}(\Omega_{m})} \right)^{\frac{1}{2}} \\
        & \overset{(\ref{Re34})}{=} & \frac{1}{\sqrt{\left\vert \Omega_{m} \right\vert}} \, \left\vert {\bf P_0} \right\vert \, \sqrt{K+1}  \, \left\Vert  \mathcal{F}(\cdot) - \frac{1}{\left\vert \Omega_{m} \right\vert} \int_{\Omega_{m}} \mathcal{F}(z) \, dz 
        \right\Vert_{\mathbb{L}^{2}(\Omega_{m})},
        \end{eqnarray*}
        and, by knowing that $\mathcal{F}(\cdot)$ satisfies a Hölderian estimate, see $(\ref{LR})$ for more details, we get  
        \begin{eqnarray}\label{Gamma2m2}
        \nonumber
        \left\vert \Gamma_{2,m,2} \right\vert  & \lesssim & \left[  \mathcal{F} \right]_{C^{0,\alpha}(\overline{\Omega})} \, \frac{1}{\sqrt{\left\vert \Omega_{m} \right\vert}} \left\vert {\bf P_0} \right\vert \, \sqrt{\int_{\Omega_{m}} \left\vert y - z_{m} \right\vert^{2 \alpha} \, dy} \\
       \nonumber
        \left\vert \Gamma_{2,m,2} \right\vert 
        & = & \mathcal{O}\left( \left[  \mathcal{F} \right]_{C^{0,\alpha}(\overline{\Omega})} \,  \left\vert {\bf P_0} \right\vert \, d^{\alpha} \right) \\
        \sum_{m=1}^{[d^{-3}]} \left\vert \Gamma_{2,m,2} \right\vert^{2} 
        & = & \mathcal{O}\left( \left[  \mathcal{F} \right]^{2}_{C^{0,\alpha}(\overline{\Omega})} \,  \left\vert {\bf P_0} \right\vert^{2} \, d^{2\alpha-3} \right).
    \end{eqnarray}
\end{enumerate}
Thanks to $(\ref{Gamma2m1})$ and $(\ref{Gamma2m2})$, the following estimation is justified,
\begin{equation}\label{Eq0427}
    \sum_{m=1}^{[d^{-3}]} \left\vert \Gamma_{2,m} \right\vert^{2} \, = \, \mathcal{O}\left( \left[ \mathcal{F} \right]^{2}_{C^{0,\alpha}(\overline{\Omega})} \,  \left\vert {\bf P_0} \right\vert^{2} \, d^{2 \alpha - 3} \right).
\end{equation}
\item[]
\item Estimation of
\begin{eqnarray*}
    \Gamma_{3,m} &:=& \, \pm  \, \xi \, \frac{1}{\left\vert \Omega_{m} \right\vert} \int_{\Omega_{m}} \int_{\Omega \setminus \overset{[d^{-3}]}{\underset{j=1}\cup} \Omega_{j}} \, \C\left({\bf P_0}\mathbb{I}\right)_{k}(x,y) \cdot \mathcal{F}(y) \, dy \, dx \\
    \left\vert \Gamma_{3,m} \right\vert & \lesssim & \, \left\Vert \mathcal{F} \right\Vert_{\mathbb{L}^{\infty}(\Omega)}  \, \frac{1}{\left\vert \Omega_{m} \right\vert} \int_{\Omega_{m}} \int_{\Omega \setminus \overset{[d^{-3}]}{\underset{j=1}\cup} \Omega_{j}} \left\vert \C\left({\bf P_0}\mathbb{I}\right)_{k}(x,y) \right\vert \, dy \, dx \\
    & \lesssim & \, \left\Vert \mathcal{F} \right\Vert_{\mathbb{L}^{\infty}(\Omega)}  \, \left\vert {\bf P_0} \right\vert \, \frac{1}{\left\vert \Omega_{m} \right\vert} \int_{\Omega_{m}} \int_{\Omega \setminus \overset{[d^{-3}]}{\underset{j=1}\cup} \Omega_{j}} \left\vert \Upsilon_{k}(x,y) \right\vert \, dy \, dx \\
    & \lesssim & \, \left\Vert \mathcal{F} \right\Vert_{\mathbb{L}^{\infty}(\Omega)}  \, \left\vert {\bf P_0} \right\vert \,  \int_{\Omega \setminus \overset{[d^{-3}]}{\underset{j=1}\cup} \Omega_{j}} \frac{1}{\left\vert z_{m} - y  \right\vert^{3}} \, dy  \\
    \sum_{m=1}^{[d^{-3}]} \left\vert \Gamma_{3,m} \right\vert^{2}  & \lesssim & \, \left\Vert \mathcal{F} \right\Vert^{2}_{\mathbb{L}^{\infty}(\Omega)}  \, \left\vert {\bf P_0} \right\vert^{2} \, \sum_{m=1}^{[d^{-3}]} \left\vert  \int_{\Omega \setminus \overset{[d^{-3}]}{\underset{j=1}\cup} \Omega_{j}} \frac{1}{\left\vert z_{m} - y  \right\vert^{3}} \, dy  \right\vert^{2}.
\end{eqnarray*}
As a result of \cite[Lemma 6.2]{cao2023all}, we generate the following estimation 
\begin{equation}\label{Eq0434}
         \sum_{m=1}^{[d^{-3}]} \left\vert \Gamma_{3,m} \right\vert^{2} \, = \,  \mathcal{O}\left( \left\Vert \mathcal{F} \right\Vert^{2}_{\mathbb{L}^{\infty}(\Omega)}  \, \left\vert {\bf P_0} \right\vert^{2} \, d^{-1} \right).
\end{equation}
\item[]
\item Estimation of 
\begin{eqnarray*}
         \Gamma_{4,m} &:=&  \pm  \, \xi \, \sum_{j=1 \atop j \neq m}^{[d^{-3}]} \, \frac{1}{\left\vert \Omega_{m} \right\vert} \, \int_{\Omega_{m}} \int_{\Omega_{j}} \, \left( \C\left({\bf P_0}\mathbb{I}\right)_{k}(x,y) - \C_{mj}\left({\bf P_0}\mathbb{I}\right)_{k} \right) \cdot \mathcal{F}(y) \, dy \, dx \\
         \Gamma_{4,m} & \overset{(\ref{Re344})}{ \underset{(\ref{TensorCmj})}{=}} & \pm  \, \xi \, \sum_{j=1 \atop j \neq m}^{[d^{-3}]} \, \frac{1}{\left\vert \Omega_{m} \right\vert} \, \int_{\Omega_{m}} \int_{\Omega_{j}} \, \begin{pmatrix}
            \boldsymbol{\varkappa}_{m_{1},j_{1}} & \cdots & \boldsymbol{\varkappa}_{m_{1},j_{K+1}} \\
            \vdots & \ddots & \vdots \\
            \boldsymbol{\varkappa}_{m_{K+1},j_{K+1}} & \cdots & \boldsymbol{\varkappa}_{m_{K+1},j_{K+1}}
         \end{pmatrix} 
         \cdot \mathcal{F}(y) \, dy \, dx, 
\end{eqnarray*}
where 
\begin{equation*}
    \boldsymbol{\varkappa}_{m_{p},j_{q}} \, := \, \left( \Upsilon_{k}(x,y) - \Upsilon_{k}(z_{m_{p}},z_{j_{q}}) \right) \cdot {\bf P_0}, \quad 1 \leq p , q \leq K+1.
\end{equation*}
Then, 
\begin{equation*}
  \left\vert \Gamma_{4,m} \right\vert  \lesssim  \left\Vert \mathcal{F} \right\Vert_{\mathbb{L}^{\infty}(\Omega)} \, \left\vert {\bf P_0} \right\vert \, \sum_{j=1 \atop j \neq m}^{[d^{-3}]} \, \frac{1}{\left\vert \Omega_{m} \right\vert} \, \int_{\Omega_{m}} \int_{\Omega_{j}} \, \left[ \sum_{p=1}^{K+1} \sum_{q=1}^{K+1}   
             \left\vert \Upsilon_{k}(x,y) - \Upsilon_{k}(z_{m_{p}},z_{j_{q}}) \right\vert^{2}   \right]^{\frac{1}{2}} \, dy \, dx. 
\end{equation*}
Using Taylor expansion, we derive 
\begin{eqnarray*}
    \left\vert \Gamma_{4,m} \right\vert & \lesssim & \left\Vert \mathcal{F} \right\Vert_{\mathbb{L}^{\infty}(\Omega)} \, \left\vert {\bf P_0} \right\vert \, d^{3} \, \sum_{j=1 \atop j \neq m}^{[d^{-3}]} \, \frac{1}{\left\vert \Omega_{m} \right\vert} \, \int_{\Omega_{m}}  \, \left[ \sum_{p=1}^{K+1} \sum_{q=1}^{K+1}   
             \left\vert \int_{0}^{1} \nabla \Upsilon_{k}(z_{m_{p}}+t(x-z_{m_{p}}),z_{j_{q}}) \cdot (x - z_{m_{p}}) dt \right\vert^{2}   \right]^{\frac{1}{2}}  \, dx \\
    & \lesssim & \left\Vert \mathcal{F} \right\Vert_{\mathbb{L}^{\infty}(\Omega)} \, \left\vert {\bf P_0} \right\vert \, d^{4} \, \sum_{j=1 \atop j \neq m}^{[d^{-3}]} \, \frac{1}{\left\vert z_{m} - z_{j} \right\vert^{4}} \\
    & \lesssim & \left\Vert \mathcal{F} \right\Vert_{\mathbb{L}^{\infty}(\Omega)} \, \left\vert {\bf P_0} \right\vert \, d^{4} \, \left( \sum_{j=1 \atop j \neq m}^{\aleph_{1}} \, \frac{1}{\left\vert z_{m} - z_{j} \right\vert^{4}} \, + \, \sum_{j=1 \atop j \neq m}^{\aleph_{2}} \, \frac{1}{\left\vert z_{m} - z_{j} \right\vert^{4}} \right),
\end{eqnarray*}
where $\aleph_{1}$ denotes the number of subdomains $\Omega_{j}$ such that $\Omega_{j} \cap \Omega_{0}^{m} \neq \{ \emptyset \}$, with $\Omega_{0}^{m}$ being a small neighborhood of $\Omega_{m}$, such that there exists $\beta \in ] 0, 1[$ satisfying 
\begin{equation*}
   \left\vert \Omega_{0}^{m} \right\vert \, = \, \mathcal{O}\left( d^{3 \beta} \right), \quad \diam\left( \Omega_{0}^{m} \right) \, = \, \mathcal{O}\left( d^{\beta} \right), 
\end{equation*}
and $\aleph_{2}$ denotes the number of subdomains $\Omega_{j}$ such that $\Omega_{j} \cap \Omega_{0}^{m} = \{ \emptyset \}$, see  \cite[Section 4]{cao2023all} for more details. Then,
\begin{eqnarray*}
        \left\vert \Gamma_{4,m} \right\vert & \lesssim & \left\Vert \mathcal{F} \right\Vert_{\mathbb{L}^{\infty}(\Omega)} \, \left\vert {\bf P_0} \right\vert \, d^{4} \, \left( \sum_{j=1 \atop j \neq m}^{\aleph_{1}} \, \frac{1}{\left\vert z_{m} - z_{j} \right\vert^{4}} \, + \, d^{-4 \beta} \right) \\
        \sum_{m=1}^{[d^{-3}]} \left\vert \Gamma_{4,m} \right\vert^{2} & \lesssim & \left\Vert \mathcal{F} \right\Vert^{2}_{\mathbb{L}^{\infty}(\Omega)} \, \left\vert {\bf P_0} \right\vert^{2} \, d^{8} \, \left( \aleph_{1} \, \sum_{j=1 \atop j \neq m}^{\aleph_{1}} \, \sum_{m=1}^{[d^{-3}]} \frac{1}{\left\vert z_{m} - z_{j} \right\vert^{8}} \, + \, d^{- 8 \beta - 3} \right) \\
        & = & \mathcal{O}\left( \left\Vert \mathcal{F} \right\Vert^{2}_{\mathbb{L}^{\infty}(\Omega)} \, \left\vert {\bf P_0} \right\vert^{2} \, d^{8} \, \left( \aleph_{1}^{2} \, d^{-8} \, + \, d^{- 8 \beta - 3} \right) \right).
\end{eqnarray*}
Knowing that $\aleph_{1} \, = \, \mathcal{O}\left( d^{3 \beta - 3} \right)$, see \cite[Section 4]{cao2023all}, we obtain
\begin{equation*}
       \sum_{m=1}^{[d^{-3}]} \left\vert \Gamma_{4,m} \right\vert^{2} 
         =  \mathcal{O}\left( \left\Vert \mathcal{F} \right\Vert^{2}_{\mathbb{L}^{\infty}(\Omega)} \, \left\vert {\bf P_0} \right\vert^{2}  \, \left( d^{6(\beta - 1)}  \, + \, d^{5 - 8 \beta} \right) \right).
\end{equation*}
To match the exponents of the parameter $d$ in the previous estimate, we select $\beta = \frac{11}{14}$. Thus,
\begin{equation}\label{Eq0450}
       \sum_{m=1}^{[d^{-3}]} \left\vert \Gamma_{4,m} \right\vert^{2} 
         =  \mathcal{O}\left( \left\Vert \mathcal{F} \right\Vert^{2}_{\mathbb{L}^{\infty}(\Omega)} \, \left\vert {\bf P_0} \right\vert^{2}  \, d^{-\frac{9}{7}} \right).
\end{equation}
\item[]
\end{enumerate}
Finally, by using $(\ref{Eq0506}), (\ref{Eq0427}), (\ref{Eq0434})$ and $ (\ref{Eq0450})$, the error term $\Gamma_{m}$, defined by $(\ref{ExpressionGamma})$, satisfies 
\begin{equation}\label{ZM}
        \sum_{m=1}^{[d^{-3}]} \left\vert \Gamma_{m} \right\vert^{2} \, = \, \mathcal{O}\left(   d^{-\frac{9}{7}} \, \left\Vert \mathcal{F} \right\Vert^{2}_{\mathbb{L}^{\infty}(\overline{\Omega})} \, \left\vert {\bf P_0} \right\vert^{2} + d^{2\alpha-3}  \,\left[\mathcal{F} \right]^{2}_{C^{0,\alpha}(\overline{\Omega})} \, \left\vert {\bf P_0} \right\vert^{2}  \right).
\end{equation}
Consequently, the difference between the discretized version associated to the general integral equation, see for instance $(\ref{G.I.E})$, given by the algebraic system $(\ref{Eq1037})$  and the general algebraic system given by $(\ref{al-dis1})$ is accorded by 
\begin{equation}\label{Eq1201}
    \mathbb{T}_{0} \cdot \left( \widetilde{\mathbb{U}}_m \, - \, \mathbb{U}_m \right) \, 
       - \, \pm  \, \xi \, d^{3} \, \sum_{j=1 \atop j \neq m}^{[d^{-3}]}  \,   \C_{mj}\left({\bf P_0}\mathbb{I}\right)_{k} \cdot \left( \widetilde{\mathbb{U}}_j - \mathbb{U}_j \right) \, =  \, \Gamma_{m}^{\star}, 
\end{equation}
with 
\begin{equation}\label{DefGammastar}
    \Gamma_{m}^{\star} \, := \, \Gamma_{m} \, - \, Error_{m}^{(1)},
\end{equation}
where $\left\{ \Gamma_{m} \right\}_{m=1}^{[d^{-3}]}$ is defined by $(\ref{ExpressionGamma})$ and $\left\{ Error_{m}^{(1)} \right\}_{m=1}^{[d^{-3}]}$ is defined by $(\ref{notation-tensor})$. Therefore, we have 
\begin{eqnarray*}
   \sum_{m=1}^{[d^{-3}]} \left\vert \Gamma_{m}^{\star} \right\vert^{2} \, & \lesssim & \,\sum_{m=1}^{[d^{-3}]} \left\vert \Gamma_{m} \right\vert^{2} \, + \,    \sum_{m=1}^{[d^{-3}]} \left\vert Error_{m}^{(1)} \right\vert^{2} \\
   & \overset{(\ref{notation-tensor})}{\leq} & \,\sum_{m=1}^{[d^{-3}]} \left\vert \Gamma_{m} \right\vert^{2} \, + \,    \sum_{m=1}^{[d^{-3}]} \, \sum_{\ell=1}^{K+1} \, \left\vert Term_{m_{\ell}}^{\star} \right\vert^{2} \\
   & \overset{(\ref{Termmlstar})}{\lesssim} & \,\sum_{m=1}^{[d^{-3}]} \left\vert \Gamma_{m} \right\vert^{2} \, + \,    \sum_{m=1}^{[d^{-3}]} \, \sum_{\ell=1}^{K+1} \, \left\vert Term_{m_{\ell}} \right\vert^{2} \, + \, k^{2} \, \sum_{m=1}^{[d^{-3}]} \, \sum_{\ell=1}^{K+1} \, \left\vert H^{Inc}(z_{m_{\ell}}) \, - \, H^{Inc}(z_{m_{1}}) \right\vert^{2},
\end{eqnarray*}
which, by using Taylor expansion and the definition of the $H^{Inc}(\cdot)$, given by $(\ref{EincHinc})$, gives us
\begin{equation*}
   \sum_{m=1}^{[d^{-3}]} \left\vert \Gamma_{m}^{\star} \right\vert^{2} \,  \lesssim  \, \sum_{m=1}^{[d^{-3}]} \left\vert \Gamma_{m} \right\vert^{2} \, + \,    \sum_{m=1}^{[d^{-3}]} \, \sum_{\ell=1}^{K+1} \, \left\vert Term_{m_{\ell}} \right\vert^{2} \, + \, k^{4} \, \sum_{m=1}^{[d^{-3}]} \, \sum_{\ell=1}^{K+1} \, \left\vert  z_{m_{\ell}} -  z_{m_{1}}  \right\vert^{2}.
\end{equation*}
We estimate the last term on the R.H.S as $\mathcal{O}\left( k^{4} \, d^{-1} \right)$. Then,
\begin{equation*}
   \sum_{m=1}^{[d^{-3}]} \left\vert \Gamma_{m}^{\star} \right\vert^{2} \,  
    \lesssim  \, \sum_{m=1}^{[d^{-3}]} \left\vert \Gamma_{m} \right\vert^{2} \, + \,    \sum_{m=1}^{[d^{-3}]} \, \sum_{\ell=1}^{K+1} \, \left\vert Term_{m_{\ell}} \right\vert^{2} \, + \, k^{4} \, d^{-1}.
\end{equation*}
Now, by using $(\ref{Termml})$, we obtain 
\begin{equation*}
   \sum_{m=1}^{[d^{-3}]} \left\vert \Gamma_{m}^{\star} \right\vert^{2} \,  
     \lesssim  \, \sum_{m=1}^{[d^{-3}]} \left\vert \Gamma_{m} \right\vert^{2} \, + \,   d^{6} \, \left\vert {\bf P_0} \right\vert^{2} \, \sum_{m=1}^{[d^{-3}]} \, \sum_{\ell=1}^{K+1} \,   \sum_{i=1\atop i\neq \ell}^{K+1} \left\vert \left( \Upsilon_{k} \, - \, \Upsilon_{0}\right)(z_{m_\ell}, z_{m_i})  \right\vert^{2} \, \sum_{i=1}^{K+1} \left\vert U_{m_i}  \right\vert^{2} \, + \, k^{4} \, d^{-1}. 
\end{equation*}
Thanks to \cite[Inequality (2.4.22)]{BouzekriThesis}, there holds 
\begin{eqnarray*}
   \sum_{m=1}^{[d^{-3}]} \left\vert \Gamma_{m}^{\star} \right\vert^{2} \,  
    & \lesssim & \, \sum_{m=1}^{[d^{-3}]} \left\vert \Gamma_{m} \right\vert^{2} \, + \,   d^{6} \, \left\vert {\bf P_0} \right\vert^{2} \, k^{4} \, \sum_{m=1}^{[d^{-3}]} \, \sum_{\ell=1}^{K+1} \,   \sum_{i=1\atop i\neq \ell}^{K+1} \frac{1}{\left\vert  z_{m_\ell} - z_{m_i}  \right\vert^{2}}  \, \sum_{i=1}^{K+1} \left\vert U_{m_i}  \right\vert^{2} \, + \, k^{4} \, d^{-1} \\
    & \overset{(\ref{dmin})}{\lesssim} & \, \sum_{m=1}^{[d^{-3}]} \left\vert \Gamma_{m} \right\vert^{2} \, + \,   d^{4} \, \left\vert {\bf P_0} \right\vert^{2} \, k^{4} \, \sum_{m=1}^{[d^{-3}]} \,  \sum_{i=1}^{K+1} \left\vert U_{m_i}  \right\vert^{2} \, + \, k^{4} \, d^{-1}. 
\end{eqnarray*}
For the second term on the R.H.S, by referring to $(\ref{EqLemma26})$ and $(\ref{LR})$, we derive that
\begin{eqnarray*}
   \sum_{m=1}^{[d^{-3}]} \left\vert \Gamma_{m}^{\star} \right\vert^{2} \, 
    & \lesssim & \, \sum_{m=1}^{[d^{-3}]} \left\vert \Gamma_{m} \right\vert^{2} \, + \,   d \, \left\vert {\bf P_0} \right\vert^{2} \, k^{4} \, \left\vert \mathbb{T}_{1}^{-1} \right\vert^{2} \, \left\Vert H^{\mathring{\mu}_r}  \right\Vert^{2}_{\mathbb{L}^{\infty}(\Omega)} \, + \, k^{4} \, d^{-1} \\
    & \lesssim & \, \sum_{m=1}^{[d^{-3}]} \left\vert \Gamma_{m} \right\vert^{2} \, +  \, k^{4} \, d^{-1} 
     \overset{(\ref{ZM})}{=}  \, \mathcal{O}\left( d^{-\frac{9}{7}} \, \left\Vert \mathcal{F} \right\Vert^{2}_{\mathbb{L}^{\infty}(\overline{\Omega})} \, \left\vert {\bf P_0} \right\vert^{2} + d^{2\alpha-3}  \,\left[ \mathcal{F} \right]^{2}_{C^{0,\alpha}(\overline{\Omega})} \, \left\vert {\bf P_0} \right\vert^{2} \right). 
\end{eqnarray*}
Lastly, by the use of $(\ref{LR})$, we come up with the following estimation  
\begin{equation}\label{Eq0630}
       \sum_{m=1}^{[d^{-3}]} \left\vert \Gamma_{m}^{\star} \right\vert^{2} \, = \, \mathcal{O}\left( d^{-\frac{9}{7}} \, \left\vert \mathbb{T}_{1}^{-1} \right\vert^{2} \, \left\Vert H^{\mathring{\mu}_r}  \right\Vert^{2}_{\mathbb{L}^{\infty}(\Omega)}  \, \left\vert {\bf P_0} \right\vert^{2} + d^{2\alpha-3}  \, \left\vert \mathbb{T}_{1}^{-1} \right\vert^{2} \,  \left[ H^{\mathring{\mu}_r} \right]^{2}_{C^{0,\alpha}(\overline{\Omega})} \, \left\vert {\bf P_0} \right\vert^{2} \right).
\end{equation}
In conclusion, the difference between the general integral equation given by $(\ref{G.I.E})$ and the general algebraic system given by $(\ref{al-dis1})$ is estimated by $(\ref{Eq0630})$. This implies, by using $(\ref{Eq1201})$ and $(\ref{Eq0630})$, 
\begin{equation}\label{Eq1158}
     \sum_{m=1}^{[d^{-3}]} \left\Vert \widetilde{\mathbb{U}}_{m} \, - \, \mathbb{U}_{m} \right\Vert^{2}_{\ell^2} \, = \, \mathcal{O}\left( d^{-\frac{9}{7}} \, \left\vert \mathbb{T}_{1}^{-1} \right\vert^{2} \, \left\Vert H^{\mathring{\mu}_r}  \right\Vert^{2}_{\mathbb{L}^{\infty}(\Omega)}  \, \left\vert {\bf P_0} \right\vert^{2} + d^{2\alpha-3}  \, \left\vert \mathbb{T}_{1}^{-1} \right\vert^{2} \,  \left[ H^{\mathring{\mu}_r} \right]^{2}_{C^{0,\alpha}(\overline{\Omega})} \, \left\vert {\bf P_0} \right\vert^{2} \right).
\end{equation}
\medskip
\begin{remark}\label{RemarkCut}
Two points are worth making. 
\begin{enumerate}
    \item[]
    \item If the choice is made to cut $\Omega$ into $\aleph$ cubes according to \textbf{Assumptions $(\ref{assII})$}, i.e. each $\Omega_{m}$ is a cube, $\mathbb{T}_{1}$'s expression given by $(\ref{DefT1})$ will alter,    
    \begin{equation*}
    \mathbb{T}_1 \, = \, \mathbb{T}_0 \, - \, \pm  \, \frac{1}{3} \, \xi \, 
     \begin{pmatrix}
        {\bf P_0} & \cdots &  {\bf P_0} \\
         \vdots & \ddots & \vdots \\
         {\bf P_0} & \cdots &   {\bf P_0}
     \end{pmatrix},
\end{equation*}
where the tensor $\mathbb{T}_0$ is defined by $(\ref{def-T0-matrix})$ and the tensor ${\bf P_0}$ is given by $(\ref{defP0})$. This is due to the fact that when $\Omega_{0}$ is a cube, we have 
\begin{equation*}
    \nabla {\bf M}_{\Omega_{0}}\left( I_{3} \right)(x) \, = \, \frac{1}{3} \, I_{3}, \quad \text{for} \quad x \in \Omega_{0}, 
\end{equation*}
see \textbf{Subsection \ref{AddSS}}.
    \item[]
    \item Since the tensor $\mathcal{A}$ is defined through the tensor $\mathbb{T}_{1}$, see $(\ref{SKM})$, the above remark applies to $\mathcal{A}$. 
\end{enumerate}
\end{remark}
\subsection{The generated effective medium}
Our starting equation is $(\ref{approximation-E})$, which by using the fact that $\aleph = [d^{-3}]$ and $\eta \, = \, \eta_{0} \, a^{-2}$, the formulas $(\ref{def-xi})$ and $(\ref{distribute})$, and the relation given by $(\ref{vari-substi})$, gives us the following relation
\begin{equation}\label{far1}
	E^\infty(\hat x) \, =  \, - \frac{i\, k}{\pm\, 4\, \pi} \, \xi \, d^{3} \, \sum_{n=1}^{M} e^{-ik\hat{x}\cdot z_n}\hat{x}\times {\bf P_0} \cdot U_n \, + \, \Oh(a^{\frac{h}{3}}).
\end{equation}
Additionally, under the distribution assumption, see \textbf{Assumptions} (2), we obtain 
\begin{equation}\label{KTMB}
    E^\infty(\hat{x}) \,  =  \, - \frac{i\, k}{\pm\, 4\, \pi} \, \xi \, d^{3} \,  \sum_{m=1}^{[d^{-3}]}\sum_{\ell=1}^{K+1}e^{- i k \hat{x}\cdot z_{m_\ell}}\hat{x}\times {\bf P}_0 \cdot U_{m_\ell} \,+ \, \Oh(a^{\frac{h}{3}}). 
\end{equation}
Now, by using the fact that $\left\vert z_{m_\ell} \, - \, z_{m_1} \right\vert$ is small, we get 
    \begin{eqnarray}\label{far-dis1}
    \nonumber
	    E^\infty(\hat{x}) &=& - \frac{i\, k}{\pm\, 4\, \pi} \, \xi \, d^{3} \, \sum_{m=1}^{[d^{-3}]}e^{- i k \hat{x} \cdot z_{m_1}}\hat{x}\times {\bf P}_0 \cdot \left(\sum_{\ell=1}^{K+1}U_{m_\ell}\right) \\
		&+&  - \frac{i\, k}{\pm\, 4\, \pi} \, \xi \, d^{3} \, \sum_{m=1}^{[d^{-3}]}\sum_{\ell=1}^{K+1} e^{- \, i \, k \hat{x}\cdot z_{m_1}} \left( \sum_{n = 1}^{\infty} \frac{\left(- i k \hat{x}\cdot(z_{m_\ell}- z_{m_1})\right)^n}{n!} \right) \, \hat{x} \times {\bf P}_0 \cdot U_{m_\ell} \, + \, \Oh(a^{\frac{h}{3}}).
	\end{eqnarray}
 Next, we estimate the second term on the R.H.S. By denoting 
 \begin{equation*}
      Residuum \, := \,  - \frac{i\, k}{\pm\, 4\, \pi} \, \xi \, d^{3} \, \sum_{m=1}^{[d^{-3}]}\sum_{\ell=1}^{K+1} e^{- \, i \, k \hat{x}\cdot z_{m_1}} \left( \sum_{n = 1}^{\infty} \frac{\left(- i k \hat{x}\cdot(z_{m_\ell}- z_{m_1})\right)^n}{n!} \right) \, \hat{x} \times {\bf P}_0 \cdot U_{m_\ell},
\end{equation*}
we have,
\begin{eqnarray*}
     \left\vert Residuum \right\vert & = & \left\vert - \frac{i\, k}{\pm\, 4\, \pi} \, \xi \, d^{3} \, \sum_{m=1}^{[d^{-3}]}\sum_{\ell=1}^{K+1} e^{- \, i \, k \hat{x}\cdot z_{m_1}} \left( \sum_{n = 1}^{\infty} \frac{\left(- i k \hat{x}\cdot(z_{m_\ell}- z_{m_1})\right)^n}{n!} \right) \, \hat{x} \times {\bf P}_0 \cdot U_{m_\ell} \right\vert \\
     & \lesssim &  k  \, d^{3} \, \left\vert {\bf P}_0 \right\vert \sum_{m=1}^{[d^{-3}]}\sum_{\ell=1}^{K+1}  \left\vert \sum_{n = 1}^{\infty} \frac{\left(- i k \hat{x}\cdot(z_{m_\ell}- z_{m_1})\right)^n}{n!} \right\vert \, \left\vert U_{m_\ell} \right\vert \\
    & \lesssim &  \, k^{2}  \, d^{3} \, \left\vert {\bf P}_0 \right\vert \sum_{m=1}^{[d^{-3}]}\sum_{\ell=1}^{K+1} \left\vert z_{m_\ell} - z_{m_1} \right\vert  \, \left\vert U_{m_\ell} \right\vert \\
    & \lesssim &  k^{2}  \, d^{3} \, \left\vert {\bf P}_0 \right\vert \,  \left( \sum_{m=1}^{[d^{-3}]} \, \sum_{\ell=1}^{K+1} \left\vert z_{m_\ell} - z_{m_1} \right\vert^{2} \right)^{\frac{1}{2}} \, \left( \sum_{m=1}^{[d^{-3}]} \, \sum_{\ell=1}^{K+1} \left\vert U_{m_\ell} \right\vert^{2} \right)^{\frac{1}{2}} \\
    & \lesssim &  k^{2}  \, d^{\frac{5}{2}} \, \left\vert {\bf P}_0 \right\vert  \, \left( \sum_{m=1}^{[d^{-3}]} \, \sum_{\ell=1}^{K+1} \left\vert U_{m_\ell} \right\vert^{2} \right)^{\frac{1}{2}}.
 \end{eqnarray*}
 As ${\bf P}_0$ is of order one, with respect to the parameter $d$, we end up with the following estimation 
 \begin{equation*}
         \left\vert Residuum \right\vert \; = \; \mathcal{O}\left( k^{2} \, d^{\frac{5}{2}}   \, \left( \sum_{m=1}^{[d^{-3}]} \, \sum_{\ell=1}^{K+1} \left\vert U_{m_\ell} \right\vert^{2} \right)^{\frac{1}{2}} \right).
 \end{equation*}
 Defining $\Psi$ as 
\begin{equation}\label{DefPsi}
 \Psi \; := \; \sum_{m=1}^{[d^{-3}]} \, \sum_{\ell=1}^{K+1} \left\vert U_{m_\ell} \right\vert^{2},
\end{equation}
reduces the length of our formulas. Hence, $(\ref{far-dis1})$ becomes, 
 \begin{equation*}
	    E^\infty(\hat{x}) \, = \, - \frac{i\, k}{\pm\, 4\, \pi} \, \xi \, d^{3} \, \sum_{m=1}^{[d^{-3}]}e^{- i k \hat{x} \cdot z_{m_1}}\hat{x}\times {\bf P}_0 \cdot \left( \sum_{\ell=1}^{K+1} U_{m_\ell} \right) 
		+  \mathcal{O}\left( k^{2} \, d^{\frac{5}{2}}   \, \Psi^{\frac{1}{2}} \right) \, + \, \Oh(a^{\frac{h}{3}}),
\end{equation*}  
which is an alternative way to rewrite $(\ref{far1})$ using the unknown $\overset{K+1}{\underset{\ell=1}\sum} U_{m_\ell}$ instead of the unknown $\mathbb{U}_{m}$. The next step will give a better understanding of this transformation. Based on the previous equation,
\begin{equation}\label{Eq235}
  E^\infty(\hat{x}) \, \overset{(\ref{notation-tensor})}{=} - \frac{i\, k}{\pm\, 4\, \pi}  \xi  d^{3} \, \sum_{m=1}^{[d^{-3}]}e^{- i k \hat{x} \cdot z_{m_1}} \hat{x} \times \left( {\bf P}_0, \cdots , {\bf P}_0  \right) \cdot \mathbb{U}_m 
		+  \mathcal{O}\left( k^{2} \, d^{\frac{5}{2}}   \, \Psi^{\frac{1}{2}} \right) \, + \, \mathcal{O}\left( a^{\frac{h}{3}} \right),
\end{equation}
where $\mathbb{U}_m$ is the solution of the general algebraic system (G.A.S) given by $(\ref{al-dis1})$. In addition, by referring to \cite[Section 2.3]{cao2023all}, we can derive that
   \begin{eqnarray}\label{ASEq2}
  \nonumber
        E^{\infty}_{eff, +}(\hat{x}) \, &=&  - \, \pm \, \frac{i \, k}{4 \, \pi} \, \xi  \, \sum_{m = 1}^{[d^{-3}]} \, \int_{ \Omega_{m}}  \, e^{- \, i \, k \, \hat{x} \cdot z}  \, \hat{x} \, \times  {\bf P_0} \cdot \left( \sum_{\ell=1}^{K+1} F_{\ell}(z) \, - \frac{1}{\left\vert \Omega_{m} \right\vert} \, \int_{\Omega_{m}} \sum_{\ell=1}^{K+1} F_{\ell}(y) \, dy \right) \, dz \, \\ \nonumber &-& \, \pm \, \frac{i \, k}{4 \, \pi} \, \xi  \, \sum_{m = 1}^{[d^{-3}]} \, \int_{ \Omega_{m}}  \, e^{- \, i \, k \, \hat{x} \cdot z}  \, \hat{x} \, \times  \, {\bf P_0} \cdot \left(  \frac{1}{\left\vert \Omega_{m} \right\vert} \, \int_{\Omega_{m}} \sum_{\ell=1}^{K+1} F_{\ell}(y) \, dy \right) \, dz \\ &+& \, \mathcal{O}\left( k \, d  \, \left\vert  {\bf P_0}  \right\vert \, \left\Vert  \sum_{\ell=1}^{K+1} F_{\ell}(\cdot) \right\Vert_{\mathbb{L}^{\infty}(\Omega)} \right). 
\end{eqnarray}
In this manner, we express the discrete form of the far-field formula by the solution to $(\ref{al-dis1})$, thanks to the transformation $(\ref{far1})$-$(\ref{Eq235})$, and present the integral form of the far-field, corresponding to the solution to the effective medium $(\ref{G.I.E})$. It is worth mentioning that we have already calculated the difference between the general algebraic system (G.A.S), given by $(\ref{al-dis1})$, and the general integral equation (G.I.E), given by $(\ref{G.I.E})$. Furthermore, by subtracting $(\ref{ASEq2})$ from $(\ref{Eq235})$, and using $(\ref{distribute})$, we get 
   \begin{eqnarray}\label{1644}
  \nonumber
       \gimel(\hat{x}) &:=& \left( E^{\infty}_{eff, +}-E^{\infty} \right)(\hat{x}) \\ \nonumber &=& -\pm \, \frac{i \, k}{4 \, \pi} \, \xi  \, \sum_{m = 1}^{[d^{-3}]} \, \int_{ \Omega_{m}}  \, e^{- \, i \, k \, \hat{x} \cdot z}  \, \hat{x} \, \times  \, {\bf P_0} \cdot \left( \sum_{\ell=1}^{K+1} F_{\ell}(z) \, - \frac{1}{\left\vert \Omega_{m} \right\vert} \, \int_{\Omega_{m}} \sum_{\ell=1}^{K+1} F_{\ell}(y) \, dy \right) \, dz \, \\ \nonumber &- &  \pm \, \frac{i \, k}{4 \, \pi} \, \xi  \, \sum_{m = 1}^{[d^{-3}]} \, \int_{ \Omega_{m}} \hat{x} \, \times \Bigg[  e^{- \, i \, k \, \hat{x} \cdot z}  \,   \, {\bf P_0} \cdot \left(  \frac{1}{\left\vert \Omega_{m} \right\vert} \, \int_{\Omega_{m}}  \sum_{\ell=1}^{K+1} F_{\ell}(y) \, dy \right) - e^{- \, i \, k \, \hat{x} \cdot z_{m_{1}}}  \left( {\bf P_0}, \cdots,  {\bf P_0} \right) \cdot \mathbb{U}_{m} \Bigg] \, dz \\ &+& \mathcal{O}\left(k^{2} \, d^{\frac{5}{2}} \, \Psi^{\frac{1}{2}} \right) + \mathcal{O}\left( a^{\frac{h}{3}}\right)  + \mathcal{O}\left( k \, d \, \left\vert {\bf P_0} \right\vert \, \left\Vert \sum_{\ell=1}^{K+1} F_{\ell}(\cdot) \right\Vert_{\mathbb{L}^{\infty}(\Omega)} \right). 
\end{eqnarray}
Then, we set and estimate the first term on the R.H.S of $(\ref{1644})$.
\begin{eqnarray*}
    J_{1}(\hat{x}) \, &:=& - \, \pm \, \frac{i \, k}{4 \, \pi} \, \xi  \, \sum_{m = 1}^{[d^{-3}]} \, \int_{ \Omega_{m}}  \, e^{- \, i \, k \, \hat{x} \cdot z}  \, \hat{x} \, \times  \, {\bf P_0} \cdot \left( \sum_{\ell=1}^{K+1} F_{\ell}(z) \, - \frac{1}{\left\vert \Omega_{m} \right\vert} \, \int_{\Omega_{m}} \sum_{\ell=1}^{K+1} F_{\ell}(y) \, dy \right) \, dz  \\
    \left\vert J_{1}(\hat{x}) \right\vert \, & \lesssim & k  \, \left[ \sum_{\ell=1}^{K+1} F_{\ell}(\cdot) \right]_{C^{0,\alpha}(\overline{\Omega})} \, \left\vert {\bf P_0} \right\vert \, \sum_{m = 1}^{[d^{-3}]} \, \int_{ \Omega_{m}}  \, \left\vert z - z_{m_{1}} \right\vert^{\alpha} \, dz \\
    \left\Vert J_{1}(\cdot) \right\Vert_{\mathbb{L}^{\infty}(\Omega)} &=& \mathcal{O}\left(k \, d^{\alpha} \, \left\vert {\bf P_0} \right\vert \, \left[ \sum_{\ell=1}^{K+1} F_{\ell}(\cdot) \right]_{C^{0,\alpha}(\overline{\Omega})} 
  \right).
\end{eqnarray*}
Hence, the formula $(\ref{1644})$ becomes, 
  \begin{eqnarray*}
       \gimel(\hat{x}) &=& - \pm \frac{i \, k}{4 \, \pi} \xi  \sum_{m = 1}^{[d^{-3}]}  \int_{ \Omega_{m}} \hat{x}  \times \left[  e^{- \, i \, k \, \hat{x} \cdot z}   {\bf P_0} \cdot \left(  \frac{1}{\left\vert \Omega_{m} \right\vert} \int_{\Omega_{m}}  \sum_{\ell=1}^{K+1} F_{\ell}(y) \, dy \right) - e^{- \, i \, k \, \hat{x} \cdot z_{m_{1}}}   \left( {\bf P_0}, \cdots,  {\bf P_0} \right) \cdot \mathbb{U}_{m} \right]  dz \\ &+& \mathcal{O}\left(k^{2} \, d^{\frac{5}{2}} \, \Psi^{\frac{1}{2}} \right) + \mathcal{O}\left( a^{\frac{h}{3}}\right)   +  \mathcal{O}\left(k \, d^{\alpha} \, \left\vert {\bf P_0} \right\vert \, \left[ \sum_{\ell=1}^{K+1} F_{\ell}(\cdot) \right]_{C^{0,\alpha}(\overline{\Omega})}
  \right) 
\end{eqnarray*}
and by Taylor expansion, we get 
  \begin{eqnarray}\label{Eq1709}
  \nonumber
       \gimel(\hat{x}) \, &=& - \, \pm \, \frac{i \, k}{4 \, \pi} \, \xi  \, \sum_{m = 1}^{[d^{-3}]} \, e^{- \, i \, k \, \hat{x} \cdot z_{m_{1}}} \, \left\vert \Omega_{m} \right\vert \hat{x} \, \times  \left[ {\bf P_0} \cdot \left(  \frac{1}{\left\vert S_{m} \right\vert} \, \int_{S_{m}}  \sum_{\ell=1}^{K+1} F_{\ell}(y) \, dy \right) - \left( {\bf P_0}, \cdots,  {\bf P_0} \right) \cdot \mathbb{U}_{m} \right] \, \\ \nonumber
       &-&  \, \pm \, \frac{k^{2}}{4 \, \pi} \, \xi  \, \sum_{m = 1}^{[d^{-3}]} \, \int_{ \Omega_{m}}  \rho(k,\hat{x},z) \, \hat{x} \, \times {\bf P_0} \cdot \left(  \frac{1}{\left\vert S_{m} \right\vert} \, \int_{S_{m}}  \sum_{\ell=1}^{K+1} F_{\ell}(y) \, dy \right)  \, dz \\ &+& \mathcal{O}\left(k^{2} \, d^{\frac{5}{2}} \, \Psi^{\frac{1}{2}} \right) + \mathcal{O}\left( a^{\frac{h}{3}}\right)   +  \mathcal{O}\left(k \, d^{\alpha} \, \left\vert {\bf P_0} \right\vert \, \left[ \sum_{\ell=1}^{K+1} F_{\ell}(\cdot) \right]_{C^{0,\alpha}(\overline{\Omega})} 
  \right), 
\end{eqnarray}
where 
\begin{equation*}
   \rho(k,\hat{x},z) \, := \, \int_{0}^{1}  e^{- \, i \, k \, \hat{x} \cdot (z_{m_{1}}+t(z-z_{m_{1})})}  \, \hat{x} \cdot (z - z_{m_{1}}) \, dt.
\end{equation*}
We set and we estimate the second term on the R.H.S. 
\begin{eqnarray*}
    J_{2}(\hat{x}) \, &:=& - \, \pm \, \frac{k^{2}}{4 \, \pi} \, \xi  \, \sum_{m = 1}^{[d^{-3}]} \, \int_{ \Omega_{m}} \, \rho(k,\hat{x},z) \, \hat{x} \, \times  \, {\bf P_0} \cdot \left(  \frac{1}{\left\vert S_{m} \right\vert} \, \int_{S_{m}}  \sum_{\ell=1}^{K+1} F_{\ell}(y) \, dy \right)  \, dz \\
    \left\vert J_{2}(\hat{x}) \right\vert \, & \lesssim & \, k^{2}  \, \left\vert {\bf P_0} \right\vert \left\Vert  \sum_{\ell=1}^{K+1} F_{\ell}(\cdot) \right\Vert_{\mathbb{L}^{\infty}(\Omega)}\, \, \sum_{m = 1}^{[d^{-3}]} \, \int_{ \Omega_{m}}  \left\vert z - z_{m_{1}} \right\vert \,  dz \\
    \left\Vert J_{2}(\cdot) \right\Vert_{\mathbb{L}^{\infty}(\Omega)} \, & = & \mathcal{O}\left(  k^{2}  \, \left\vert {\bf P_0} \right\vert \left\Vert  \sum_{\ell=1}^{K+1} F_{\ell}(\cdot) \right\Vert_{\mathbb{L}^{\infty}(\Omega)} \,  d \right).
\end{eqnarray*}
Hence, the formula $(\ref{Eq1709})$ becomes, 
  \begin{eqnarray}\label{Eq0645}
  \nonumber
       \gimel(\hat{x}) \, &=& - \, \pm \, \frac{i \, k}{4 \, \pi} \, \xi  \, \sum_{m = 1}^{[d^{-3}]} \, e^{- \, i \, k \, \hat{x} \cdot z_{m_{1}}} \, \left\vert \Omega_{m} \right\vert \hat{x} \, \times  \left[ {\bf P_0} \cdot \left(  \frac{1}{\left\vert S_{m} \right\vert} \, \int_{S_{m}}  \sum_{\ell=1}^{K+1} F_{\ell}(y) \, dy \right) - \left( {\bf P_0}, \cdots,  {\bf P_0} \right) \cdot \mathbb{U}_{m} \right] \, \\  &+& \mathcal{O}\left(k^{2} \, d^{\frac{5}{2}} \, \Psi^{\frac{1}{2}} \right) + \mathcal{O}\left( a^{\frac{h}{3}}\right)   +  \mathcal{O}\left(k \, d^{\alpha} \, \left\vert {\bf P_0} \right\vert \, \left[ \sum_{\ell=1}^{K+1} F_{\ell}(\cdot) \right]_{C^{0,\alpha}(\overline{\Omega})} 
  \right). 
\end{eqnarray}
Now, we are in a position to analyze the following term appearing on the R.H.S of the above equation
\begin{equation*}
    J_{m} :=  {\bf P_0} \cdot \left(  \frac{1}{\left\vert S_{m} \right\vert} \, \int_{S_{m}}  \sum_{\ell=1}^{K+1} F_{\ell}(y) \, dy \right) - \left( {\bf P_0}, \cdots,  {\bf P_0} \right) \cdot \mathbb{U}_{m}  
     \underset{\eqref{FM2004}}{\overset{(\ref{Re34})}{=}}  \left( {\bf P_0}; \cdots; {\bf P_0} \right) \cdot \left( \widetilde{\mathbb{U}}_m  -  \mathbb{U}_{m} \right).
\end{equation*}
Consequently, the far-fields estimation derived in $(\ref{Eq0645})$ becomes, 
  \begin{eqnarray*}
  \nonumber
       \gimel(\hat{x}) \, &=& \mp \, \frac{i \, k}{4 \, \pi} \, \xi  \, \sum_{m = 1}^{[d^{-3}]} \, e^{- \, i \, k \, \hat{x} \cdot z_{m_{1}}} \, \left\vert \Omega_{m} \right\vert \hat{x} \, \times   \left( {\bf P_0}, \cdots,  {\bf P_0} \right) \cdot \left( \widetilde{\mathbb{U}}_m  -  \mathbb{U}_{m} \right)\\ &+& \mathcal{O}\left(k^{2} \, d^{\frac{5}{2}} \, \Psi^{\frac{1}{2}} \right) + \mathcal{O}\left( a^{\frac{h}{3}}\right)   +  \mathcal{O}\left(k \, d^{\alpha} \, \left\vert {\bf P_0} \right\vert \, \left[ \sum_{\ell=1}^{K+1} F_{\ell}(\cdot) \right]_{C^{0,\alpha}(\overline{\Omega})} 
  \right). 
\end{eqnarray*}
We have justified that the difference between the general integral equation, given by $(\ref{G.I.E})$, and its associated general algebraic system, given by $(\ref{al-dis1})$, is the term noted by $\Gamma_{m}^{\star}$, see $(\ref{DefGammastar})$ for its definition. Now, by taking the $\left\Vert \cdot \right\Vert_{\mathbb{L}^{\infty}(\mathbb{S}^2)}$-norm on the both sides of the previous equation, we obtain 
  \begin{eqnarray*}
       \left\Vert \gimel(\cdot) \right\Vert_{\mathbb{L}^{\infty}(\mathbb{S}^2)}  \, &=& \mathcal{O}\left(  k \, \left\vert {\bf P_0} \right\vert \, d^{\frac{3}{2}} \, \left( \sum_{m = 1}^{[d^{-3}]} \, \left\vert \Gamma_{m}^{\star} \right\vert^{2} \right)^{\frac{1}{2}} \right) + \mathcal{O}\left(k^{2} \, d^{\frac{5}{2}} \, \Psi^{\frac{1}{2}} \right) + \mathcal{O}\left( a^{\frac{h}{3}}\right) \\ &+&  \mathcal{O}\left(k \, d^{\alpha} \, \left\vert {\bf P_0} \right\vert \, \left[ \sum_{\ell=1}^{K+1} F_{\ell}(\cdot) \right]_{C^{0,\alpha}(\overline{\Omega})} 
  \right). 
\end{eqnarray*}
Moreover, thanks to $(\ref{Eq0630})$ and $(\ref{LR})$, we get  
  \begin{eqnarray*}
       \left\Vert \gimel(\cdot) \right\Vert_{\mathbb{L}^{\infty}(\mathbb{S}^2)}  \, &=& \mathcal{O}\left(  k \, \left\vert {\bf P_0} \right\vert^{2}  \, \left( d^{\frac{12}{7}} \, \left\Vert \mathcal{F} \right\Vert^{2}_{\mathbb{L}^{\infty}(\overline{\Omega})} \,  + d^{2\alpha}  \,\left[  \mathcal{F} \right]^{2}_{C^{0,\alpha}(\overline{\Omega})} \,   \right)^{\frac{1}{2}} \right) \\&+& \mathcal{O}\left(k^{2} \, d^{\frac{5}{2}} \, \Psi^{\frac{1}{2}} \right) + \mathcal{O}\left( a^{\frac{h}{3}}\right)   +  \mathcal{O}\left(k \, d^{\alpha} \, \left\vert {\bf P_0} \right\vert \, \left[ \sum_{\ell=1}^{K+1} F_{\ell}(\cdot) \right]_{C^{0,\alpha}(\overline{\Omega})} 
  \right). 
\end{eqnarray*}
Remark that, based on the definition of $\mathcal{F}(\cdot)$, see $(\ref{Re34})$, we have
\begin{equation*}
 \left[ \overset{K+1}{\underset{\ell=1}\sum} F_{\ell}(\cdot) \right]_{C^{0,\alpha}(\overline{\Omega})} \, \simeq \, \left[ \mathcal{F}(\cdot) \right]_{C^{0,\alpha}(\overline{\Omega})},   
\end{equation*}
 hence  
   \begin{eqnarray}\label{Eeff-E}
   \nonumber
       \left\Vert \gimel(\cdot) \right\Vert_{\mathbb{L}^{\infty}(\mathbb{S}^2)}  \, &=& \mathcal{O}\left(  k \, \left\vert {\bf P_0} \right\vert^{2}  \, \left( d^{\frac{12}{7}} \, \left\Vert \mathcal{F} \right\Vert^{2}_{\mathbb{L}^{\infty}(\overline{\Omega})} \,  + d^{2\alpha}  \,\left[  \mathcal{F} \right]^{2}_{C^{0,\alpha}(\overline{\Omega})} \,   \right)^{\frac{1}{2}} \right) \\ &+& \mathcal{O}\left(k^{2} \, d^{\frac{5}{2}} \, \Psi^{\frac{1}{2}} \right) + \mathcal{O}\left( a^{\frac{h}{3}}\right)   +  \mathcal{O}\left(k \, d^{\alpha} \, \left\vert {\bf P_0} \right\vert \, \left[ \mathcal{F} \right]_{C^{0,\alpha}(\overline{\Omega})} 
  \right). 
\end{eqnarray}
In order to finish the R.H.S estimation, it is necessary to estimate $\Psi$, see for instance $(\ref{DefPsi})$. For this, we have,
\begin{eqnarray*}
    \Psi  \overset{(\ref{DefPsi})}{=}   \,      \sum_{m=1}^{[d^{-3}]} \,  \left\Vert       \mathbb{U}_m \right\Vert^{2}_{\ell^{2}} \; & \lesssim & \; \sum_{m=1}^{[d^{-3}]} \,  \left\Vert  \widetilde{\mathbb{U}}_m \right\Vert^{2}_{\ell^{2}} \; + \; \sum_{m=1}^{[d^{-3}]} \,  \left\Vert       \mathbb{U}_m \, - \, \widetilde{\mathbb{U}}_m \right\Vert^{2}_{\ell^{2}}  \\ & \overset{(\ref{FM2004})}{=} & \sum_{m=1}^{[d^{-3}]} \,  \left\Vert       \frac{1}{\left\vert S_{m} \right\vert} \int_{S_{m}} \mathcal{F}(x) \, dx \right\Vert^{2}_{\ell^{2}} \; + \; \sum_{m=1}^{[d^{-3}]} \,  \left\Vert       \mathbb{U}_m \, - \, \widetilde{\mathbb{U}}_m \right\Vert^{2}_{\ell^{2}} \\ &\leq& \,  \sum_{m=1}^{[d^{-3}]} \, \frac{1}{\left\vert S_{m} \right\vert}         \int_{S_{m}} \left\Vert \mathcal{F}(x) \right\Vert^{2}_{\ell^{2}} \, dx \; + \; \sum_{m=1}^{[d^{-3}]} \,  \left\Vert       \mathbb{U}_m \, - \, \widetilde{\mathbb{U}}_m \right\Vert^{2}_{\ell^{2}} \\ &=& \, \sum_{m=1}^{[d^{-3}]} \, \left\Vert \mathcal{F} \right\Vert^{2}_{\mathbb{L}^{\infty}(\Omega)} \;  + \; \sum_{m=1}^{[d^{-3}]} \,  \left\Vert       \mathbb{U}_m \, - \, \widetilde{\mathbb{U}}_m \right\Vert^{2}_{\ell^{2}} \\
    &=&  d^{-3} \, \left\Vert \mathcal{F} \right\Vert^{2}_{\mathbb{L}^{\infty}(\Omega)}  \;  + \; \sum_{m=1}^{[d^{-3}]} \,  \left\Vert       \mathbb{U}_m \, - \, \widetilde{\mathbb{U}}_m \right\Vert^{2}_{\ell^{2}},
\end{eqnarray*}
which, by using $(\ref{Eq1158})$, gives us 
\begin{eqnarray*}
   \Psi & = & d^{-3} \, \left\Vert \mathcal{F} \right\Vert^{2}_{\mathbb{L}^{\infty}(\Omega)}  \; + \, \mathcal{O}\left( d^{-\frac{9}{7}} \, \left\vert \mathbb{T}_{1}^{-1} \right\vert^{2} \, \left\Vert H^{\mathring{\mu}_r}  \right\Vert^{2}_{\mathbb{L}^{\infty}(\Omega)}  \, \left\vert {\bf P_0} \right\vert^{2} + d^{2\alpha-3}  \, \left\vert \mathbb{T}_{1}^{-1} \right\vert^{2} \,  \left[ H^{\mathring{\mu}_r} \right]^{2}_{C^{0,\alpha}(\overline{\Omega})} \, \left\vert {\bf P_0} \right\vert^{2} \right) \\
   & \overset{(\ref{LR})}{=} & d^{-3} \, \left\Vert \mathcal{F} \right\Vert^{2}_{\mathbb{L}^{\infty}(\Omega)}  \; + \, \mathcal{O}\left( d^{-\frac{9}{7}} \,  \left\Vert \mathcal{F}  \right\Vert^{2}_{\mathbb{L}^{\infty}(\Omega)}  \, \left\vert {\bf P_0} \right\vert^{2} + d^{2\alpha-3}   \,  \left[ \mathcal{F} \right]^{2}_{C^{0,\alpha}(\overline{\Omega})} \, \left\vert {\bf P_0} \right\vert^{2} \right) \\
    & = &  \mathcal{O}\left( d^{-3} \, \left\Vert \mathcal{F} \right\Vert^{2}_{\mathbb{L}^{\infty}(\Omega)} + d^{2\alpha-3}   \,  \left[ \mathcal{F} \right]^{2}_{C^{0,\alpha}(\overline{\Omega})} \, \left\vert {\bf P_0} \right\vert^{2} \right).
\end{eqnarray*}
Hence, 
\begin{equation}\label{EqLemma26}
    \Psi \, 
         = \,  \mathcal{O}\left(d^{-3} \, \left\Vert \mathcal{F} \right\Vert^{2}_{\mathbb{L}^{\infty}(\Omega)} \right).
\end{equation}
Now, thanks to $(\ref{EqLemma26})$, the estimation $(\ref{Eeff-E})$ takes the following form 
\begin{equation*}
       \left\Vert \gimel(\cdot) \right\Vert_{\mathbb{L}^{\infty}(\mathbb{S}^2)} 
       = \mathcal{O}\left(  k \, \left\vert {\bf P_0} \right\vert^{2}  \, \left( d^{\frac{12}{7}} \, \left\Vert \mathcal{F} \right\Vert^{2}_{\mathbb{L}^{\infty}(\overline{\Omega})} \,  + d^{2\alpha}  \,\left[ \mathcal{F} \right]^{2}_{C^{0,\alpha}(\overline{\Omega})} \,   \right)^{\frac{1}{2}} \right) +  \mathcal{O}\left( a^{\frac{h}{3}}\right)   +  \mathcal{O}\left(k \, d^{\alpha} \, \left\vert {\bf P_0} \right\vert \, \left[ \mathcal{F} \right]_{C^{0,\alpha}(\overline{\Omega})} 
  \right). 
\end{equation*}
The last step consists in investigating how $\mathcal{F}(\cdot)$ depends on the magnetic field $H^{\mathring{\mu_{r}}}(\cdot)$. By comparing the expression of the L.S.E derived in \cite[Equation (1.37)]{cao2023all} and the L.S.E associated to our reduced integral equation, see $(\ref{Re16})$, we deduce that the magnetic field admits the following representation
\begin{equation}\label{H=F}
    H^{\mathring{\mu_{r}}}(\cdot) \, = \, \mathcal{A}^{-1} \cdot \sum_{\ell=1}^{K+1} F_{\ell}(\cdot),
\end{equation}
which by using the definition of $\mathcal{F}(\cdot)$, given by $(\ref{Re34})$, and the expression of the local distribution tensor $\mathcal{A}$, given by $(\ref{SKM})$, we end up with the following relation 
\begin{equation*}
    \mathcal{F}(\cdot) \, = \, \mathbb{T}_{1}^{-1} \cdot \begin{pmatrix}
        I_{3} \\
        \vdots \\
        I_{3}
    \end{pmatrix} \cdot H^{\mathring{\mu_{r}}}(\cdot),
\end{equation*}
where $\mathbb{T}_{1}$ is the constant tensor defined by $(\ref{DefT1})$. This allows us to deduce that 
\begin{equation}\label{LR}
  \left[ \mathcal{F} \right]_{C^{0,\alpha}(\overline{\Omega})} = \mathcal{O}\left( \left\vert \mathbb{T}_{1}^{-1} \right\vert \, \left[ H^{\mathring{\mu_{r}}} \right]_{C^{0,\alpha}(\overline{\Omega})}  \right)  \quad \text{and} \quad \left\Vert \mathcal{F} \right\Vert_{\mathbb{L}^{\infty}(\Omega)} = \mathcal{O}\left( \left\vert \mathbb{T}_{1}^{-1} \right\vert \, \left\Vert H^{\mathring{\mu_{r}}} \right\Vert_{\mathbb{L}^{\infty}(\Omega)}  \right).
\end{equation}
Finally, by recalling the definition of $\gimel(\cdot)$, 
\begin{eqnarray*}
       \left\Vert E^{\infty}_{eff, +}  -          E^{\infty} \right\Vert_{\mathbb{L}^{\infty}(\mathbb{S}^2)} 
       &=& \mathcal{O}\left(  k \, \left\vert {\bf P_0} \right\vert^{2}  \, \left\vert \mathbb{T}_{1}^{-1} \right\vert \, \left( d^{\frac{12}{7}}  \, \left\Vert H^{\mathring{\mu_{r}}} \right\Vert^{2}_{\mathbb{L}^{\infty}(\Omega)} \,  + d^{2\alpha}  \, \left[ H^{\mathring{\mu_{r}}} \right]^{2}_{C^{0,\alpha}(\overline{\Omega})} \,   \right)^{\frac{1}{2}} \right) \\ &+&  \mathcal{O}\left( a^{\frac{h}{3}}\right)   +  \mathcal{O}\left(k \, d^{\alpha} \, \left\vert {\bf P_0} \right\vert \, \left\vert \mathbb{T}_{1}^{-1} \right\vert \, \left[ H^{\mathring{\mu_{r}}} \right]_{C^{0,\alpha}(\overline{\Omega})} 
  \right). 
\end{eqnarray*}
This proves $(\ref{Eq113})$.
\section{Inversion of (\ref{Re16}) and the  Magnetic field regularity.}\label{PositiveOperator}
We start by recalling, from $(\ref{Re16})$, the following L.S.E, 
\begin{equation*}
    \textbf{F} - \, \pm  \, \xi  \, \left[ - \, \nabla {\bf M}^{k}_{\Omega}\left( {\bf P_0} \cdot \mathcal{A} \cdot \textbf{F} \right) \, + \, k^{2} \, {\bf N}^{k}_{\Omega}\left( {\bf P_0} \cdot \mathcal{A} \cdot \textbf{F} \right) \right] \, = \, i \, k \, H^{Inc},
\end{equation*}
which, by using the relation given by $(\ref{H=F})$, becomes 
\begin{equation}\label{Eq1901}
	H^{\mathring{\mu}_r} \, - \, \pm \, \xi  \, \left[ - \, \nabla {\bf M}^{k}\left( {\bf P_0} \cdot \mathcal{A} \cdot H^{\mathring{\mu}_r}\right) \, + \, k^{2} \, {\bf N^{k}} \left( {\bf P_0} \cdot \mathcal{A} \cdot H^{\mathring{\mu}_r}\right) \right] \, = \, i\, k \, H^{Inc}.
\end{equation}
Now, thanks to the Helmholtz decomposition given by $(\ref{hel-decomp})$, we project $(\ref{Eq1901})$ into the sub-spaces $\mathbb{H}_0(\div=0), \mathbb{H}_0(Curl=0)$ and $\nabla \mathcal{H}armonic$. 
\begin{enumerate}
    \item[]
    \item Projection onto $\mathbb{H}_0(\div=0)$.
    \begin{eqnarray*}
     \langle	H^{\mathring{\mu}_r}, \overline{ \overset{1}{\mathbb{P}}\left( H^{\mathring{\mu}_r} \right)}\rangle_{\mathbb{L}^{2}(\Omega)}  \, &+& \, \pm \, \xi  \, \langle \nabla {\bf M}^{k}\left( {\bf P_0} \cdot \mathcal{A} \cdot H^{\mathring{\mu}_r}\right), \overline{ \overset{1}{\mathbb{P}}\left( H^{\mathring{\mu}_r} \right)}\rangle_{\mathbb{L}^{2}(\Omega)} \\
      &-& \, \pm \, \xi \, k^{2}  \, \langle  {\bf N^{k}} \left( {\bf P_0} \cdot \mathcal{A} \cdot H^{\mathring{\mu}_r}\right), \overline{ \overset{1}{\mathbb{P}}\left( H^{\mathring{\mu}_r} \right)}\rangle_{\mathbb{L}^{2}(\Omega)} \, = \, i\, k \, \langle H^{Inc}, \overline{\overset{1}{\mathbb{P}}\left( H^{\mathring{\mu}_r} \right)}\rangle_{\mathbb{L}^{2}(\Omega)}.
\end{eqnarray*}
Taking the adjoint of the operator $\nabla {\bf M}^{k}$ and using the fact that $\nabla {\bf M}^{k}{|_{\mathbb{H}_0(\div=0)}} \, = \, 0$, we come up with the following equation 
    \begin{equation*}
     \left\Vert \overset{1}{\mathbb{P}}\left( H^{\mathring{\mu}_r} \right) \right\Vert^{2}_{\mathbb{L}^{2}(\Omega)} \, 
       -  \, \pm \, \xi \, k^{2}  \, \langle  {\bf N^{k}} \left( {\bf P_0} \cdot \mathcal{A} \cdot H^{\mathring{\mu}_r}\right), \overline{ \overset{1}{\mathbb{P}}\left( H^{\mathring{\mu}_r} \right)}\rangle_{\mathbb{L}^{2}(\Omega)} \, = \, i\, k \, \langle H^{Inc}, \overline{\overset{1}{\mathbb{P}}\left( H^{\mathring{\mu}_r} \right)}\rangle_{\mathbb{L}^{2}(\Omega)}.
    \end{equation*}
    In addition, we have 
    \begin{equation*}
        \left\vert \pm \, \xi \, k^{2}  \, \langle  {\bf N^{k}} \left( {\bf P_0} \cdot \mathcal{A} \cdot H^{\mathring{\mu}_r}\right), \overline{ \overset{1}{\mathbb{P}}\left( H^{\mathring{\mu}_r} \right)}\rangle_{\mathbb{L}^{2}(\Omega)} \right\vert \, \leq \,  \xi  \, \sqrt{3} \, k^{2}  \, \left\Vert {\bf N^{k}} \right\Vert_{\mathcal{L}\left( \mathbb{L}^{2}(\Omega); \mathbb{L}^{2}(\Omega)\right)} \, \left\Vert {\bf P_0} \cdot \mathcal{A} \right\Vert_{\mathbb{\ell}^{2}} \,  \left\Vert H^{\mathring{\mu}_r} \right\Vert_{\mathbb{L}^{2}(\Omega)} \; \left\Vert \overset{1}{\mathbb{P}}\left( H^{\mathring{\mu}_r} \right)\right\Vert_{\mathbb{L}^{2}(\Omega)}. 
    \end{equation*}
    Based on the expression of ${\bf N^{k}}$ given in \cite[Equation (2.8)]{CGS} and the estimation of $\left\Vert {\bf N} \right\Vert_{\mathcal{L}\left( \mathbb{L}^{2}(\Omega); \mathbb{L}^{2}(\Omega)\right)}$ derived in \cite[page 23]{cao2023all}, we get 
    \begin{equation*}
        \left\Vert {\bf N^{k}} \right\Vert_{\mathcal{L}\left( \mathbb{L}^{2}(\Omega); \mathbb{L}^{2}(\Omega)\right)} \, \lesssim \, \frac{\left\vert \Omega \right\vert}{\pi} \; c_{1}(k),  
    \end{equation*}
    where $c_{1}(\cdot)$ is the frequency function given by 
    \begin{equation*}
        c_{1}(k) \, := \, 1 \, + \, \frac{k}{4} \, + \, \frac{1}{4 \, \diam(\Omega)} \, \sum_{n \geq 2} \frac{\left(k \, \diam(\Omega) \right)^{n}}{n!}. 
    \end{equation*}
    Then, 
    \begin{equation*}
     \left\Vert \overset{1}{\mathbb{P}}\left( H^{\mathring{\mu}_r} \right) \right\Vert_{\mathbb{L}^{2}(\Omega)} \, \left( \left\Vert \overset{1}{\mathbb{P}}\left( H^{\mathring{\mu}_r} \right)\right\Vert_{\mathbb{L}^{2}(\Omega)} \,
       -  \,  \xi  \, k^{2}  \, \sqrt{3} \, \frac{\left\vert \Omega \right\vert}{\pi} \, c_{1}(k) \, \left\Vert {\bf P_0} \cdot \mathcal{A} \right\Vert_{\ell^{2}} \,  \left\Vert H^{\mathring{\mu}_r} \right\Vert_{\mathbb{L}^{2}(\Omega)}  \right) \, \leq \, i\, k \, \langle H^{Inc}, \overline{\overset{1}{\mathbb{P}}\left( H^{\mathring{\mu}_r} \right)}\rangle_{\mathbb{L}^{2}(\Omega)}.
    \end{equation*}
And, under the condition 
\begin{equation}\label{IneqI}
        \xi  \, k^{2}  \, \sqrt{3} \, \frac{\left\vert \Omega \right\vert}{\pi} \, c_{1}(k) \, \left\Vert {\bf P_0} \cdot \mathcal{A} \right\Vert_{\ell^{2}} \,  \left\Vert H^{\mathring{\mu}_r} \right\Vert_{\mathbb{L}^{2}(\Omega)} \, < \, \left\Vert \overset{1}{\mathbb{P}}\left( H^{\mathring{\mu}_r} \right)\right\Vert_{\mathbb{L}^{2}(\Omega)}, 
\end{equation}
we derive the positivity of the operator $I_{3} \, - \, \pm \, \xi  \, \left[ - \, \nabla {\bf M}^{k} \circ {\bf P_0} \cdot \mathcal{A}  \, + \, k^{2} \, {\bf N^{k}} \circ {\bf P_0} \cdot \mathcal{A}  \right]$, appearing on the L.H.S of $(\ref{Eq1901})$, on the subspace $\mathbb{H}_0(\div=0)$. \\
    \item[]
    \item Projection onto $\mathbb{H}_0(Curl=0)$.
    \begin{eqnarray*}
     \langle	H^{\mathring{\mu}_r}, \overline{ \overset{2}{\mathbb{P}}\left( H^{\mathring{\mu}_r} \right)}\rangle_{\mathbb{L}^{2}(\Omega)}  \, &+& \, \pm \, \xi  \, \langle \nabla {\bf M}^{k}\left( {\bf P_0} \cdot \mathcal{A} \cdot H^{\mathring{\mu}_r}\right), \overline{ \overset{2}{\mathbb{P}}\left( H^{\mathring{\mu}_r} \right)}\rangle_{\mathbb{L}^{2}(\Omega)} \\
      &-& \, \pm \, \xi \, k^{2}  \, \langle  {\bf N^{k}} \left( {\bf P_0} \cdot \mathcal{A} \cdot H^{\mathring{\mu}_r}\right), \overline{ \overset{2}{\mathbb{P}}\left( H^{\mathring{\mu}_r} \right)}\rangle_{\mathbb{L}^{2}(\Omega)} \, = \, i\, k \, \langle H^{Inc}, \overline{\overset{2}{\mathbb{P}}\left( H^{\mathring{\mu}_r} \right)}\rangle_{\mathbb{L}^{2}(\Omega)}.
\end{eqnarray*}
Since $H^{Inc}$ is a divergence free vector field, the R.H.S of the previous equation will be vanishing. In addition, on $\mathbb{H}_0(Curl=0)$, we have 
\begin{equation*}
    \nabla {\bf M}^{k} \, = \, k^{2} \, {\bf N}^{k} \, + \, I_{3}. 
\end{equation*}
Thus, 
 \begin{equation}\label{MAB}
     \left\Vert \overset{2}{\mathbb{P}}\left( H^{\mathring{\mu}_r} \right) \right\Vert^{2}_{\mathbb{L}^{2}(\Omega)}   \, \pm \, \xi  \, \langle {\bf P_0} \cdot \mathcal{A} \cdot H^{\mathring{\mu}_r}, \overline{ \overset{2}{\mathbb{P}}\left( H^{\mathring{\mu}_r} \right)}\rangle_{\mathbb{L}^{2}(\Omega)} \, =  \, 0.
\end{equation}
This implies the vanishing character of the operator $I_{3} \, - \, \pm \, \xi  \, \left[ - \, \nabla {\bf M}^{k} \circ {\bf T}^{\mathring\mu_{r}}  \, + \, k^{2} \, {\bf N^{k}} \circ  {\bf T}^{\mathring\mu_{r}}  \right]$, appearing on the L.H.S of $(\ref{Eq1901})$, on the subspace $\mathbb{H}_0(Curl=0)$. 
Furthermore, $(\ref{MAB})$ implies, 
\begin{equation}\label{IneqII}
     \xi \, \sqrt{3} \, \left\Vert {\bf P_0} \cdot \mathcal{A} \right\Vert_{\ell^{2}} \; \left\Vert  H^{\mathring{\mu}_r}  \right\Vert_{\mathbb{L}^{2}(\Omega)} \; < \; \left\Vert \overset{2}{\mathbb{P}}\left( H^{\mathring{\mu}_r} \right) \right\Vert_{\mathbb{L}^{2}(\Omega)}. 
\end{equation}
    \item[]
    \item Projection onto $\nabla \mathcal{H}armonic$. 
     \begin{eqnarray}\label{ZDM}
     \nonumber
     \langle	H^{\mathring{\mu}_r}, \overline{ \overset{3}{\mathbb{P}}\left( H^{\mathring{\mu}_r} \right)}\rangle_{\mathbb{L}^{2}(\Omega)}   \, & \pm & \, \xi  \, \langle \nabla {\bf M}\left( {\bf P_0} \cdot \mathcal{A} \cdot H^{\mathring{\mu}_r}\right), \overline{ \overset{3}{\mathbb{P}}\left( H^{\mathring{\mu}_r} \right)}\rangle_{\mathbb{L}^{2}(\Omega)} \\ & \pm &  \xi  \, \langle {\bf K}\left( {\bf P_0} \cdot \mathcal{A} \cdot H^{\mathring{\mu}_r}\right), \overline{ \overset{3}{\mathbb{P}}\left( H^{\mathring{\mu}_r} \right)}\rangle_{\mathbb{L}^{2}(\Omega)}  \, = \, i\, k \, \langle H^{Inc}, \overline{\overset{3}{\mathbb{P}}\left( H^{\mathring{\mu}_r} \right)}\rangle_{\mathbb{L}^{2}(\Omega)}, 
\end{eqnarray}
where ${\bf K}$ is the operator defined by
\begin{equation*}
    {\bf K} \, := \, \left( \nabla {\bf M}^{k} \, - \, \nabla {\bf M} \right) \, - \, k^{2} \, {\bf N}^{k}.
\end{equation*}
For the second and the third terms on the L.H.S of $(\ref{ZDM})$, we have 
\begin{eqnarray}\label{Eq1154}
\nonumber
   L_{2,3} & := & \, \pm  \, \xi  \, \langle \nabla {\bf M}\left( {\bf P_0} \cdot \mathcal{A} \cdot H^{\mathring{\mu}_r}\right), \overline{ \overset{3}{\mathbb{P}}\left( H^{\mathring{\mu}_r} \right)}\rangle_{\mathbb{L}^{2}(\Omega)} \, \pm \,  \xi  \, \langle {\bf K}\left( {\bf P_0} \cdot \mathcal{A} \cdot H^{\mathring{\mu}_r}\right), \overline{ \overset{3}{\mathbb{P}}\left( H^{\mathring{\mu}_r} \right)}\rangle_{\mathbb{L}^{2}(\Omega)} \\
   \left\vert L_{2,3} \right\vert & \leq & \xi \, \sqrt{3} \, \left\Vert {\bf P_0} \cdot \mathcal{A} \right\Vert_{\ell^{2}} \, \left\Vert H^{\mathring{\mu}_r}\right\Vert_{\mathbb{L}^{2}(\Omega)} \; \left\Vert \overset{3}{\mathbb{P}}\left( H^{\mathring{\mu}_r} \right)  \right\Vert_{\mathbb{L}^{2}(\Omega)} \, \left( \left\Vert \nabla {\bf M} \right\Vert_{\mathcal{L}\left(\mathbb{L}^{2}(\Omega);\mathbb{L}^{2}(\Omega)\right)} \, + \, \left\Vert {\bf K} \right\Vert_{\mathcal{L}\left(\mathbb{L}^{2}(\Omega); \mathbb{L}^{2}(\Omega)\right)}  \right).
\end{eqnarray}
By referring to \cite[Inequality (3.22)]{cao2023all}, we have 
\begin{equation*}
    \left\Vert {\bf K} \right\Vert_{\mathcal{L}\left(\mathbb{L}^{2}(\Omega); \mathbb{L}^{2}(\Omega)\right)} \, \leq \, \frac{\left\vert \Omega \right\vert}{\pi} \, c_{2}(k),  
\end{equation*}
where $c_{2}(\cdot)$ is the frequency function given by
\begin{equation*}
    c_{2}(k) : =  k^2 \,  +  \frac{k^3}{6}  \,   + \frac{k^2}{4  \, \diam(\Omega)} \, \sum_{n \geq 2}\frac{\left( k \, \diam(\Omega) \right)^{n}}{n !} + \frac{k^3}{16} \, \sum_{n \geq 1}\frac{\left( k \, \diam(\Omega) \right)^{n}}{n !},  
\end{equation*}
and $\left\Vert \nabla {\bf M} \right\Vert_{\mathcal{L}\left(\mathbb{L}^{2}(\Omega);\mathbb{L}^{2}(\Omega)\right)} \, = \, 1$, see \cite[Lemma 5.5]{GS}. Hence, the inequality $(\ref{Eq1154})$ becomes
\begin{equation*}
   \left\vert L_{2,3} \right\vert \, \leq  \,   \xi \, \sqrt{3} \, \left\Vert {\bf P_0} \cdot \mathcal{A} \right\Vert_{\ell^{2}} \, \left\Vert H^{\mathring{\mu}_r}\right\Vert_{\mathbb{L}^{2}(\Omega)} \; \left\Vert \overset{3}{\mathbb{P}}\left( H^{\mathring{\mu}_r} \right)  \right\Vert_{\mathbb{L}^{2}(\Omega)} \, \left( 1 \, + \, \frac{\left\vert \Omega \right\vert}{\pi} \, c_{2}(k)  \right).
\end{equation*}
Then, from $(\ref{ZDM})$, we derive the coming estimation 
     \begin{equation*}
     \left\Vert \overset{3}{\mathbb{P}}\left( H^{\mathring{\mu}_r} \right)  \right\Vert_{\mathbb{L}^{2}(\Omega)} \, \left[ \left\Vert \overset{3}{\mathbb{P}}\left( H^{\mathring{\mu}_r} \right)  \right\Vert_{\mathbb{L}^{2}(\Omega)} \, - \,  
 \xi  \, \sqrt{3} \, \left\Vert {\bf P_0} \cdot \mathcal{A} \right\Vert_{\ell^{2}}  \, \left\Vert H^{\mathring{\mu}_r}\right\Vert_{\mathbb{L}^{2}(\Omega)} \;  \, \left( 1 \, + \, \frac{\left\vert \Omega \right\vert}{\pi} \, c_{2}(k)  \right) \right] \, \leq \, i\, k \, \langle H^{Inc}, \overline{\overset{3}{\mathbb{P}}\left( H^{\mathring{\mu}_r} \right)}\rangle_{\mathbb{L}^{2}(\Omega)}. 
\end{equation*}
And, under the condition,
\begin{equation}\label{IneqIII}
   \xi \, \sqrt{3} \, \left\Vert {\bf P_0} \cdot \mathcal{A} \right\Vert_{\ell^{2}} \, \left\Vert H^{\mathring{\mu}_r}\right\Vert_{\mathbb{L}^{2}(\Omega)} \;  \, \left( 1 \, + \, \frac{\left\vert \Omega \right\vert}{\pi} \, c_{2}(k)  \right) \, < \,   \left\Vert \overset{3}{\mathbb{P}}\left( H^{\mathring{\mu}_r} \right)  \right\Vert_{\mathbb{L}^{2}(\Omega)}, 
\end{equation}
we derive the positivity of the operator $I_{3} \, - \, \pm \, \xi  \, \left[ - \, \nabla {\bf M}^{k} \circ {\bf P_0} \cdot \mathcal{A}  \, + \, k^{2} \, {\bf N^{k}} \circ  {\bf P_0} \cdot \mathcal{A}  \right]$, appearing on the L.H.S of $(\ref{Eq1901})$, on the subspace $\nabla \mathcal{H}armonic$.
\end{enumerate}
\medskip
Now, by taking the square on each side of the inequalities $(\ref{IneqI}), (\ref{IneqII})$ and $(\ref{IneqIII})$ and summing up the three obtained inequalities, we come up with the following sufficient condition ensuring the positivity of the operator $I_{3} \, - \, \pm \, \xi  \, \left[ - \, \nabla {\bf M}^{k} \circ {\bf P_0} \cdot \mathcal{A}  \, + \, k^{2} \, {\bf N^{k}} \circ  {\bf P_0} \cdot \mathcal{A}  \right]$, appearing on the L.H.S of $(\ref{Eq1901})$, on $\mathbb{L}^{2}(\Omega)$, 
\begin{equation*}
     \xi \, \sqrt{3} \, \left\Vert {\bf P_0} \cdot \mathcal{A} \right\Vert_{\ell^{2}} \, \sqrt{ \left( k^{2} \, \frac{\left\vert \Omega \right\vert}{\pi} \, c_{1}(k)  \right)^{2} \, + \, 1 \, + \, \left( 1 \, + \, \frac{\left\vert \Omega \right\vert}{\pi} \, c_{2}(k)  \right)^{2}}   \, < \, 1, 
\end{equation*}
which can be reduced, by taking the dominant part of $c_{1}(\cdot)$ and $c_{2}(\cdot)$, to 
\begin{equation*}
       \left\Vert {\bf P_0} \right\Vert_{\ell^{2}} \;  \left\Vert \mathcal{A} \right\Vert_{\ell^{2}} \, < \, \frac{\pi}{3 \, \xi \, \sqrt{6} \, \left( \pi \, + \, k \, \left\vert \Omega \right\vert \right) } . 
\end{equation*}
\section{Appendix}
\subsection{Computation of $\mathcal{A}$ for dimers}\label{Appendix}
We recall, from $(\ref{SKM})$, the expression of  $\mathcal{A}$ and we assume that we have two nano-particles located on $z_{0_{1}}$ and $z_{0_{2}}$. 
Thus, 
\begin{equation*}
        \mathcal{A} \, = \,  \begin{pmatrix}
            I_{3} & I_{3} 
        \end{pmatrix} \cdot \mathbb{T}_1^{-1} \cdot \begin{pmatrix}
            I_{3} \\
            I_{3}
        \end{pmatrix},         
\end{equation*}
where $\mathbb{T}_1$ is the tensor given by $(\ref{DefT1})$. Then, 
\begin{equation*}
         \mathcal{A} \,  =  \,  \begin{pmatrix}
            I_{3} & I_{3} 
        \end{pmatrix} 
         \cdot \left( \mathbb{T}_0 \, - \, \pm \,\xi \,  \frac{1}{\left\vert \Omega_{0} \right\vert} \, \int_{\Omega_{0}}  \, \begin{pmatrix}
        \nabla {\bf M}_{\Omega_{0}}\left( I_{3} \right)(x) \cdot {\bf P_0} &   \nabla {\bf M}_{\Omega_{0}}\left( I_{3} \right)(x) \cdot {\bf P_0} \\
        \nabla {\bf M}_{\Omega_{0}}\left( I_{3} \right)(x) \cdot {\bf P_0} &   \nabla {\bf M}_{\Omega_{0}}\left( I_{3} \right)(x) \cdot {\bf P_0}
    \end{pmatrix} \, dx \right)^{-1} \cdot 
            \begin{pmatrix}
            I_{3} \\
            I_{3} 
            \end{pmatrix}. 
\end{equation*}
For simplicity, we assume that the used nano-particles are of spherical shape. This implies, 
\begin{equation}\label{SSOMA}
    {\bf P_0} \,=\, \frac{12}{\pi^{3}} \, I_{3},  
\end{equation}
see \cite[Equation (3.4)]{cao2023all}. Hence, 
\begin{equation*}
         \mathcal{A} \,  =  \,  \begin{pmatrix}
            I_{3} & I_{3} 
        \end{pmatrix} 
         \cdot \left( \mathbb{T}_0 \, - \, \pm \,\xi \, \frac{12}{\pi^{3}} \, \frac{1}{\left\vert \Omega_{0} \right\vert} \, \int_{\Omega_{0}}  \, \begin{pmatrix}
        \nabla {\bf M}_{\Omega_{0}}\left( I_{3} \right)(x)  &   \nabla {\bf M}_{\Omega_{0}}\left( I_{3} \right)(x)  \\
        \nabla {\bf M}_{\Omega_{0}}\left( I_{3} \right)(x)  &   \nabla {\bf M}_{\Omega_{0}}\left( I_{3} \right)(x) 
    \end{pmatrix} \, dx \right)^{-1} \cdot 
            \begin{pmatrix}
            I_{3} \\
            I_{3} 
            \end{pmatrix}. 
\end{equation*}
Furthermore, the formula below can be obtained by referring to \textbf{Subsection \ref{AddSS}.} 
\begin{equation}\label{SSOMAI}
    \nabla {\bf M}_{\Omega_{0}}\left( I_{3} \right)(x) \, = \, \frac{1}{3} \, I_{3}, \quad \text{for} \quad x \in \Omega_{0}.   
\end{equation}
This implies, 
\begin{eqnarray*}
                 \mathcal{A} \, & \overset{}{=} & \,  \begin{pmatrix}
            I_{3} & I_{3} 
        \end{pmatrix} 
         \cdot \left( \mathbb{T}_0 \, - \, \pm \, \frac{4 \, \xi}{\pi^{3}}  \, \begin{pmatrix}
         I_{3} &  I_{3} \\
         I_{3} &  I_{3}
    \end{pmatrix} \right)^{-1} \cdot 
            \begin{pmatrix}
            I_{3} \\
            I_{3} 
            \end{pmatrix} \\    
        & \overset{(\ref{def-T0-matrix})}{=}  & \ \begin{pmatrix}
            I_{3} & I_{3}
        \end{pmatrix} \cdot \left( \mathbb{I} \, - \, \pm \, \xi \, d^{3} \,  \D_0({\bf P_0}{\mathbb{I}}) - \, \pm \, \frac{4 \, \xi}{\pi^3}  \, \begin{pmatrix}
         I_{3} &   I_{3} \\
         I_{3} &   I_{3}
    \end{pmatrix} \right)^{-1} \cdot            \begin{pmatrix}
            I_{3} \\
            I_{3} 
        \end{pmatrix}. 
\end{eqnarray*}
 Hence, by using $(\ref{SSOMA})$, 
\begin{equation*}
        \mathcal{A} \, = \,    \begin{pmatrix}
            I_{3} & I_{3}
        \end{pmatrix}  \cdot \left( \begin{pmatrix}
            I_{3} & 0 \\
            0 & I_{3}
        \end{pmatrix} \, - \, \pm \, \xi \, d^{3} \, \frac{12}{\pi^{3}} \,  \begin{pmatrix}
            0 & \Upsilon_{0} \\
            \Upsilon_{0} & 0
        \end{pmatrix}  \, - \, \pm \, \xi \, \frac{4}{\pi^{3}} \, \begin{pmatrix}
            I_{3} & I_{3} \\
            I_{3} & I_{3}
        \end{pmatrix} \right)^{-1} \cdot 
             \begin{pmatrix}
            I_{3} \\
            I_{3} 
        \end{pmatrix},
\end{equation*}
where $\Upsilon_{0} := \Upsilon_{0}(z_{0_{1}}, z_{0_{2}})$. Then, 
\begin{equation*}\label{ADP}
               \mathcal{A} \,  =  \dfrac{\left(\pi^{3}  \mp 4 \, \xi \right)}{\pi^{3}} \,   \begin{pmatrix}
            I_{3} & I_{3}
        \end{pmatrix}  \cdot \begin{pmatrix}   \, I_{3} &  \dfrac{\mp \, 4 \, \xi}{\left( \pi^{3} \mp 4 \, \xi \right)}  \, \left[I_{3} + 3 \, d^{3} \, \Upsilon_{0} \right] \\ 
        & \\  \dfrac{\mp \, 4 \, \xi}{\left( \pi^{3} \mp 4 \, \xi \right)}  \, \left[I_{3} + 3 \, d^{3} \, \Upsilon_{0} \right] &
             I_{3}  \end{pmatrix}^{-1} \cdot
             \begin{pmatrix}
            I_{3} \\
            I_{3}
        \end{pmatrix}.
\end{equation*}
And, by using Block matrix inversion\footnote{\url{https://en.wikipedia.org/wiki/Block_matrix}}, we obtain
\begin{equation}\label{Betha}
\mathcal{A} \,  =  \dfrac{\left(\pi^{3}  \mp 4 \, \xi \right)}{\pi^{3}} \,   \begin{pmatrix}
            I_{3} & I_{3}
        \end{pmatrix}  \cdot \begin{pmatrix}   \, \beth_{11} &  \beth_{12} \\ 
        & \\  \beth_{21} &
             \beth_{22}  \end{pmatrix} \cdot
             \begin{pmatrix}
            I_{3} \\
            I_{3}
        \end{pmatrix} 
        = \dfrac{\left(\pi^{3}  \mp 4 \, \xi \right)}{\pi^{3}} \,    \left(   \, \beth_{11} + \beth_{12} +  \beth_{21} +
             \beth_{22}  \right), 
\end{equation}
where 
\begin{equation*}
\begin{matrix}
\qquad \;\;\; \beth_{11}  :=  I_{3} \, + \, \mathcal{D} \cdot \left(I_{3} \, - \, \mathcal{D}^{2} \right)^{-1} \cdot \mathcal{D} & & \qquad \; \beth_{12}  :=  - \, \mathcal{D} \cdot \left(I_{3} \, - \, \mathcal{D}^{2} \right)^{-1} \\
& & \\ 
\beth_{21}  :=  - \, \left(I_{3} \, - \, \mathcal{D}^{2} \right)^{-1} \cdot \mathcal{D} & &     \beth_{22}  :=   \left(I_{3} \, - \, \mathcal{D}^{2} \right)^{-1}
\end{matrix},
\end{equation*}
with 
\begin{equation}\label{LRD}
    \mathcal{D} \, := \, \dfrac{\mp \, 4 \, \xi}{\left( \pi^{3} \mp 4 \, \xi \right)}  \, \left[I_{3} + 3 \, d^{3} \, \Upsilon_{0} \right]. 
\end{equation}
By going back to $(\ref{Betha})$ and using the expression of $\beth_{11}, \beth_{12}, \beth_{21}$ and $\beth_{22}$, we obtain 
\begin{equation}\label{Eq2203}
\mathcal{A} = \dfrac{2 \, \left(\pi^{3}  \mp 4 \, \xi \right)}{\pi^{3}} \,    \left(  I_{3} \, + \,
             \mathcal{D}  \right)^{-1} 
             \overset{(\ref{LRD})}{=} \dfrac{2 \, \left(\pi^{3}  \mp 4 \, \xi \right)^{2}}{\pi^{3} \, \left(\pi^{3}  \mp 8 \, \xi \right) } \,    \left(  I_{3} \, + \, \dfrac{\mp \, 12 \, \xi  \, d^{3}}{\left( \pi^{3} \mp 8 \, \xi \right)}  \, \Upsilon_{0} 
               \right)^{-1}. 
\end{equation}
We recall that, 
\begin{eqnarray}\label{DimerUpsilon}
\nonumber
    \Upsilon_{0} := \Upsilon_{0}\left(z_{0_{1}}, z_{0_{2}}\right) \, &=& \, \frac{-1}{4 \, \pi} \, \frac{1}{\left\vert z_{0_{1}} \, - \, z_{0_{2}} \right\vert^{3}} \, \left(  \, I_{3} \, -  \, \frac{3}{\left\vert z_{0_{1}} \, - \, z_{0_{2}} \right\vert^{2}} \left( z_{0_{1}} \, - \, z_{0_{2}} \right) \otimes \left( z_{0_{1}} \, - \, z_{0_{2}} \right) \right) \\
     &=& \, \frac{-1}{4 \, \pi} \, \frac{1}{d^{3}} \, \left(  \, I_{3} \, -  \, \frac{3}{d^{2}} \left( z_{0_{1}} \, - \, z_{0_{2}} \right) \otimes \left( z_{0_{1}} \, - \, z_{0_{2}} \right) \right).
\end{eqnarray}
Then, by gathering $(\ref{Eq2203})$ with $(\ref{DimerUpsilon})$, we end up with the following equation 
\begin{eqnarray}\label{ExpressionA}
\nonumber
\mathcal{A} \, 
             & = & \, \dfrac{2 \, \left(\pi^{3}  \mp 4 \, \xi \right)^{2}}{\pi^{2} \, \left[ \pi \, \left(\pi^{3}  \mp 8 \, \xi \right) \pm 3 \, \xi \right]} \,    \left(  I_{3} \, - \, \dfrac{\pm \, 9 \, \xi}{d^{2} \, \left[ \pi 
            \left( \pi^{3} \mp 8 \, \xi \right) \pm 3 \, \xi \right]}  \, \left( z_{0_{1}} \, - \, z_{0_{2}} \right) \otimes \left( z_{0_{1}} \, - \, z_{0_{2}} \right) 
               \right)^{-1} \\
           & = & \, \dfrac{2 \, \left(\pi^{3}  \mp 4 \, \xi \right)^{2}}{\pi^{2} \, \left[ \pi \, \left(\pi^{3}  \mp 8 \, \xi \right) \pm 3 \, \xi \right]} \, \left( I_{3} \, - \,      \dfrac{\pm \, 9 \, \xi}{d^{2} \, \left[ \pi 
            \left( \pi^{3} \mp 8 \, \xi \right) - \pm 6 \, \xi \right]}  \, \left( z_{0_{1}} \, - \, z_{0_{2}} \right) \otimes \left( z_{0_{1}} \, - \, z_{0_{2}} \right) 
               \right).
\end{eqnarray}
\subsection{Justification of $(\ref{SSOMAI})$}\label{AddSS}
We recall, from $(\ref{SSOMAI})$, and we set the following expression 
\begin{equation*}
    \mathcal{W}(x) \, := \, \nabla {\bf M}_{\Omega_{0}}(I_{3})(x), \quad x \in \Omega_{0}.
\end{equation*}
Without losing generalities, we assume that $\Omega_{0}$ is the unit cube. To compute the matrix $\mathcal{W}(\cdot)$, we start by computing its diagonal components. To do this, by taking the trace operator on the both sides of the previous equation we obtain  
\begin{eqnarray*}
    Trace(\mathcal{W}(x)) \, = \, Trace \left( \nabla {\bf M}_{\Omega_{0}}(I_{3})(x) \right) 
    := \, Trace \left(  \underset{x}{\nabla} \int_{\Omega_{0}} \underset{y}{\nabla} \Phi_{0}(x,y) \, dy \right) = \, Trace \left(- \,  \underset{x}{\nabla} \underset{x}{\nabla} \int_{\Omega_{0}}  \Phi_{0}(x,y) \, dy \right).
\end{eqnarray*}
As $Trace\left( \underset{x}{\nabla} \, \underset{x}{\nabla} \cdots \right) \, = \, \underset{x}{\Delta} \left( \cdots \right)$, we obtain 
\begin{equation*}
    Trace(\mathcal{W}(x)) \, = \,- \,   \underset{x}{\Delta} \int_{\Omega_{0}}  \Phi_{0}(x,y) \, dy,
\end{equation*}
which, by taking the the Laplace operator over the integral sign, using the fact that $\Delta \Phi_{0} \, = \, - \, \delta_{0}$ and $x \in \Omega_{0}$, gives us
\begin{equation*}
    Trace(\mathcal{W}(x)) \,  = \,1, \quad x \in \Omega_{0}.
\end{equation*}
Moreover, since $\Omega_{0}$ is symmetric we deduce that the diagonal components are equals to $\dfrac{1}{3}$, i.e.
\begin{equation}\label{Eq525}
    \mathcal{W}(x) \,  =  \, \begin{pmatrix}
        \dfrac{1}{3} & \cdot & \cdot \\
        \cdot & \dfrac{1}{3} & \cdot \\
        \cdot & \cdot & \dfrac{1}{3}
    \end{pmatrix}
    , \quad x \in \Omega_{0}.
\end{equation}
Now, for the off-diagonal components of $\mathcal{W}(\cdot)$, by referring to \cite[Formula (13)]{Samokhin}, we obtain 
\begin{eqnarray*}
    \kappa_{\epsilon,i,j} \, &:=& \, \left(\frac{\partial }{ \partial x_{i}} \, \frac{\partial }{ \partial x_{j}} \int_{\Omega_{0}} \frac{1}{4 \pi} \frac{1}{\left\vert x - y \right\vert} \, dy \right) \\
    &=& \, \lim_{\epsilon \rightarrow 0} \, \left[  \left(\frac{\partial }{ \partial x_{i}} \, \frac{\partial }{ \partial x_{j}} \int_{\Omega_{0} \setminus \Omega(x ,  \epsilon) } \frac{1}{4 \pi} \frac{1}{\left\vert x - y \right\vert^{3}} \, \left[ \frac{3 \, (x_{i} - y_{i}) (x_{j} - y_{j})}{\left\vert x - y \right\vert^{2}} \, - \, \delta_{i,j} \right] \, dy \right) -  \frac{\delta_{i,j}}{3} \right],
\end{eqnarray*}
where $\Omega(x,\epsilon)$ is a small cube of center $x$ and radius $\epsilon$, with $\epsilon \ll 1$. The above expression, for $i \neq j$, indicates that 
\begin{equation*}
    \kappa_{\epsilon,i,j} \, = \, \frac{3}{4 \pi} \, \lim_{\epsilon \rightarrow 0} \,  \left(\frac{\partial }{ \partial x_{i}} \, \frac{\partial }{ \partial x_{j}} \int_{\Omega_{0} \setminus \Omega(x ,  \epsilon) }  \frac{(x_{i} - y_{i}) (x_{j} - y_{j})}{\left\vert x - y \right\vert^{5}}  \, dy \right).
\end{equation*}
The domain of integration is explicitly defined, resulting in
\begin{equation*}
    \int_{\Omega_{0} \setminus \Omega(x ,  \epsilon) }  \frac{(x_{i} - y_{i}) (x_{j} - y_{j})}{\left\vert x - y \right\vert^{5}}  \, dy \, = \, \overset{3}{\underset{k=1}\times} \left( \int_{-1}^{-\epsilon + x_{k}} \, + \, \int_{\epsilon + x_{k}}^{1} \right) \frac{(x_{i} - y_{i}) (x_{j} - y_{j})}{\left\vert x - y \right\vert^{5}}  \, dy_{1} \, dy_{2} \, dy_{3}.
\end{equation*}
Hence, 
\begin{equation*}
    \kappa_{\epsilon,i,j} \, = \, \frac{3}{4 \pi} \, \lim_{\epsilon \rightarrow 0}  \,  \left(\frac{\partial }{ \partial x_{i}} \, \frac{\partial }{ \partial x_{j}} \overset{3}{\underset{k=1}\times} \left( \int_{-1}^{-\epsilon + x_{k}} \, + \, \int_{\epsilon + x_{k}}^{1} \right) \frac{(x_{i} - y_{i}) (x_{j} - y_{j})}{\left\vert x - y \right\vert^{5}}  \, dy_{1} \, dy_{2} \, dy_{3} \right).
\end{equation*}
Without loss of generalities, we assume that $i=1$ and $j=2$. Then, let's compute 
\begin{eqnarray*}
    \lim_{\epsilon \rightarrow 0} \mathcal{J}_{1}(x, y_{2}, y_{3}, \epsilon) \, &:=& \, \lim_{\epsilon \rightarrow 0}  \left( \int_{-1}^{-\epsilon + x_{1}} \, + \, \int_{\epsilon + x_{1}}^{1} \right) \frac{(x_{1} - y_{1}) (x_{2} - y_{2})}{\left\vert x - y \right\vert^{5}}  \, dy_{1} \\
    &=& \lim_{\epsilon \rightarrow 0}  \, (x_{2} - y_{2}) \, \left( \int_{-1}^{-\epsilon + x_{1}} \, + \, \int_{\epsilon + x_{1}}^{1} \right) \frac{(x_{1} - y_{1})}{\left\vert x - y \right\vert^{5}}  \, dy_{1} \\
    &=& \lim_{\epsilon \rightarrow 0}  \, \frac{(x_{2} - y_{2})}{3} \, \left( \frac{1}{\left\vert x - y \right\vert^{3}} \Bigg|_{-1 = y_{1}}^{-\epsilon + x_{1} = y_{1}}  \, + \,  \frac{1}{\left\vert x - y \right\vert^{3}} \Bigg|_{\epsilon + x_{1} = y_{1}}^{1 = y_{1}}  \right) = 0.
\end{eqnarray*}
This implies, 
\begin{equation*}
    \kappa_{\epsilon,1,2} \, = \, \frac{3}{4 \pi} \, \lim_{\epsilon \rightarrow 0}  \,  \left(\frac{\partial }{ \partial x_{i}} \, \frac{\partial }{ \partial x_{j}} \overset{3}{\underset{k=2}\times} \left( \int_{-1}^{-\epsilon + x_{k}} \, + \, \int_{\epsilon + x_{k}}^{1} \right) \mathcal{J}_{1}(x, y_{2}, y_{3}, \epsilon)   \, dy_{2} \, dy_{3} \right) = 0.
\end{equation*}
Hence, $\mathcal{W}(\cdot)$ is off-diagonal vanishing. Together with $(\ref{Eq525})$, we derive that 
\begin{equation*}
    \mathcal{W}(x) \, = \, \frac{1}{3} \; I_{3}, \quad x \in \Omega_{0}. 
\end{equation*}
This proves $(\ref{SSOMAI})$. 

\end{document}